\newcommand{\vp}{\varphi}
\newcommand{\ve}{\varepsilon}
\newcommand{\ddbar}{\sqrt{-1} \partial \overline{\partial}}
\newcommand{\ol}{\overline}
\newcommand{\ri}{\rightarrow}
\newcommand{\ov}[1]{\overline{#1}}
\newcommand{\de}{\partial}
\newcommand{\dbar}{\overline{\partial}}
\newcommand{\ti}{\tilde}
\newtheorem*{claim*}{Claim}
\begin{document}
%\begin{CJK}{GBK}{song}
\newcounter{theor}
\setcounter{theor}{1}
\newtheorem{claim}{Claim}
\newtheorem{theorem}{Theorem}[section]
\newtheorem{lemma}[theorem]{Lemma}
\newtheorem{corollary}[theorem]{Corollary}
\newtheorem{proposition}[theorem]{Proposition}
\newtheorem{prop}{Proposition}[section]
\newtheorem{question}{question}[section]
\newtheorem{defn}{Definition}[section]
\newtheorem{remark}{Remark}[section]

\numberwithin{equation}{section}

\title[Fully non-linear elliptic equations]{Fully non-linear elliptic equations on compact almost Hermitian manifolds}

\author[J. Chu]{Jianchun Chu}
\address{School of Mathematical Sciences, Peking University, Yiheyuan Road 5, Beijing, 100871, P. R. China, }
\email{jianchunchu@math.pku.edu.cn}

\author[L. Huang]{Liding Huang}
\address{Westlake Institute for Advanced Study (Westlake University), 18 Shilongshan Road, Cloud Town, Xihu District, Hangzhou 310024, P. R. China}
\email{huangliding@westlake.edu.cn}

\author[J. Zhang]{Jiaogen Zhang}
\address{School of Mathematical Sciences, University of Science and Technology of China, Hefei 230026, P. R. China}
\email{zjgmath@mail.ustc.edu.cn}

\begin{abstract}
In this paper, we establish a priori estimates for solutions of a general class of fully non-linear equations on compact almost Hermitian manifolds. As an application, we solve the complex Hessian equation and the Monge--Amp\`ere equation for $(n-1)$-plurisubharmonic equations in the almost Hermitian setting.
\end{abstract}
\maketitle

\section{Introduction}\label{introduction}

Let $(M,\chi,J)$ be a compact almost Hermitian manifold of real dimension $2n$. Suppose that $\omega$ is a real $(1,1)$-form on $(M,J)$. For $u\in C^{2}(M)$, we write
\[
\omega_{u} := \omega+\ddbar{u}
:= \omega+\frac{1}{2}(dJdu)^{(1,1)}
\]
and let $\mu(u)=(\mu_{1}(u),\ldots,\mu_{n}(u))$ be the eigenvalues of $\omega_{u}$ with respect to $\chi$. For notational convenience, we sometimes denote $\mu_{i}(u)$ by $\mu_{i}$ when no confusion will arise. We consider the following PDE:
\begin{equation}\label{nonlinear equation}
F(\omega_{u}) := f(\mu_{1},\cdots,\mu_{n}) = h,
\end{equation}
where $h\in C^{\infty}(M)$ and $f$ is a smooth symmetric function.

The real version of the equation \eqref{nonlinear equation} has been studied extensively. In the pioneering work \cite{CNS85}, Caffarelli-Nirenberg-Spruck considered the Dirichlet problem in the domain of $\mathbb{R}^n$. In \cite{Guan14}, Guan studied such equations on Riemannian manifolds, introduced a kind of subsolution and derived $C^2$ estimate if subsolution exists. Later, Sz\'ekelyhidi \cite{Szekelyhidi18} focused on the complex setting, proposed another kind of subsolution (i.e. $\mathcal{C}$-subsolution) and established $C^{2,\alpha}$ estimate when $\mathcal{C}$-subsolution exists. In \cite{CM21}, Chu-McCleerey derived the real Hessian estimate independent of $\inf_{M}h$, which can be applied to the degenerate case of \eqref{nonlinear equation}.

Following the setting of Sz\'ekelyhidi \cite{Szekelyhidi18}, we assume that $f$ is defined in an open symmetric cone $\Gamma\subsetneq\mathbb{R}^{n}$ with vertex at the origin containing the positive orthant $\Gamma_{n}=\{\mu\in\mathbb{R}^{n}:\mu_{i}>0 \ \text{for $i=1,\ldots,n$}\}$. Furthermore, suppose that
\begin{enumerate}[(i)]\setlength{\itemsep}{1mm}
\item $f_{i}=\frac{\de f}{\de\mu_{i}}>0$ for all $i$ and $f$ is concave,
\item $\sup_{\partial\Gamma}f<\inf_{M}h$,
\item For any $\sigma<\sup_{\Gamma} f$ and $\mu\in \Gamma$, we have $\lim_{t\rightarrow \infty}f(t\mu)>\sigma$,
\end{enumerate}
where
\[
\sup_{\de\Gamma}f = \sup_{\lambda'\in\de\Gamma}\limsup_{\lambda\rightarrow\lambda'}f(\lambda).
\]
Many geometric equations are of the form \eqref{nonlinear equation}, such as complex Monge--Amp\`ere equation, complex Hessian equation, complex Hessian quotient equation and the Monge--Amp\`ere equation for $(n-1)$-plurisubharmonic functions.

For the complex Monge--Amp\`ere equation, when $(M,\omega)$ is K\"ahler, Yau \cite{Yau78} proved the existence of solution and solved Calabi's conjecture (see \cite{Calabi57}). When $(M,\omega)$ is Hermitian, the complex Monge--Amp\`{e}re equation has been solved under some assumptions on $\omega$ (see \cite{Cherrier87,Hanani96,GL10,TW10a,ZZ11}). The general Hermitian case was solved by Tosatti-Weinkove \cite{TW10b}. More generally, in the almost Hermitian setting, analogous result was obtained by Chu-Tosatti-Weinkove \cite{CTW19}.

When $(M,\omega)$ is K\"{a}hler, using a priori estimates of Hou \cite{Hou09} and Hou-Ma-Wu \cite{HMW10}, Dinew-Ko{\l}odziej \cite{DK17} solved the complex Hessian equation. This result was generalized to general Hermitian case by Sz\'ekelyhidi \cite{Szekelyhidi18} and Zhang \cite{Zhang17} independently.

For the complex Hessian quotient equation (i.e. complex $\sigma_{k}/\sigma_{l}$ equation where $1\leq l<k\leq n$), when $(M,\omega)$ is K\"ahler, $k=n$, $l=n-1$ and $h$ is constant, Song-Weinkove \cite{SW08} obtained the necessary and sufficient condition for the existence of solution. This condition is equivalent to the existence of $\mathcal{C}$-subsolution. This result was generalized by Fang-Lai-Ma \cite{FLM11} to $k=n$ and general $l$, and by Sz\'ekelyhidi \cite{Szekelyhidi18} to general $k$ and $l$. When $\omega$ is Hermitian and $h$ is not constant, analogous result was obtained by Sun \cite{Sun16,Sun17a,Sun17b} (see also \cite{Li14,GS15}). When $k=n$ and $1\leq l\leq n-1$, the third author \cite{ZJ21} proved a priori estimates and gave a sufficient condition for the existence of solution  in the almost Hermitian setting.

The Monge--Amp\`ere equation for $(n-1)$-plurisubharmonic functions was introduced and studied by Fu-Wang-Wu \cite{FWW10,FWW15}, which is a kind of Monge--Amp\`ere type equation as follows:
\begin{equation}\label{n-1}
\left(\eta+\frac{1}{n-1}\Big((\Delta^{C}u)\chi-\ddbar u\Big)\right)^{n} = e^{h}\chi^{n},
\end{equation}
where $\eta$ is a Hermitian metric, $\Delta^{C}$ denotes the canonical Laplacian operator of $\chi$. When $\chi$ is a K\"ahler metric, the equation \eqref{n-1} was solved by Tosatti-Weinkove \cite{TW17}. They later generalized this result to general Hermitian metric $\chi$ in \cite{TW19}.

Our main result is the following estimate:

\begin{theorem}\label{main estimate}
Let $(M,\chi,J)$ be a compact almost Hermitian manifold of real dimension $2n$ and $\underline{u}$ is a $\mathcal{C}$-subsolution (see Definition \ref{sub}) of \eqref{nonlinear equation}. Suppose that $u$ is a smooth solution of (\ref{nonlinear equation}) . Then for any $\alpha\in(0,1)$, we have the following estimate
\begin{equation*}
	\| u\|_{C^{2,\alpha}(M,\chi)}\leq C,
\end{equation*}
where $C$ is a constant depending only on  $\alpha$, $\underline{u}$, $h$, $\omega$, $f$, $\Gamma$ and $(M,\chi,J)$.
\end{theorem}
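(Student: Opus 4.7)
The plan is to follow the standard hierarchy of a priori estimates, $C^{0}$, $C^{1}$, $C^{2}$, and then $C^{2,\alpha}$, with each step adapted to handle the extra torsion and the Nijenhuis tensor of $J$ that appear in the almost Hermitian setting. The $\mathcal{C}$-subsolution $\underline{u}$ plays a role at every stage, supplying the structural coercivity required by the general operator $F$. Throughout, I would work in $\chi$-unitary frames and carefully book-keep the commutators between $\nabla$, $\partial$, and $\bar\partial$ produced by the non-integrability of $J$; this is the principal technical novelty compared with the Hermitian work of Sz\'ekelyhidi \cite{Szekelyhidi18}.

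For the zeroth order bound I would adapt the Alexandrov--Bakelman--Pucci argument of Sz\'ekelyhidi to the almost Hermitian category, in the spirit of the complex \MA\ work of Chu--Tosatti--Weinkove \cite{CTW19}. One tests $u - \underline{u}$ at a minimum point and uses the structural conditions (ii), (iii) together with the $\mathcal{C}$-subsolution hypothesis to bound $\sup_{M}(\underline{u}-u)$, from which the full $L^{\infty}$ bound follows in the standard way. The gradient bound would be obtained by a blow-up / Liouville strategy: assume for contradiction that $\sup_{M}|\nabla u_{k}|_{\chi} \to \infty$ for a sequence of solutions, rescale around maximum points, and pass to the limit on $\mathbb{C}^{n}$ (the Nijenhuis tensor rescales away, so the limiting almost complex structure is standard), then invoke a Dinew--Ko{\l}odziej type Liouville theorem. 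The blow-up requires a local Hessian estimate depending only on $\|\nabla u\|_{\infty}$, which is essentially what the main Hessian step produces.

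The core technical obstacle is the real Hessian estimate. Following Chu--Tosatti--Weinkove \cite{CTW19} and Chu--McCleerey \cite{CM21}, I would apply the maximum principle to a test function of the form
\[
Q = \log \lambda_{1}(\nabla^{2}u) + \varphi(|\nabla u|_{\chi}^{2}) + \psi(u - \underline{u}),
\]
where $\lambda_{1}$ is the largest eigenvalue of the real Hessian of $u$ and $\varphi,\psi$ are auxiliary one-variable functions to be chosen. At a maximum point of $Q$, I would differentiate $\lambda_{1}$ via a perturbation of the repeated eigenvalue and then apply the linearised operator $L = F^{i\bar j}\partial_{i}\partial_{\bar j}$. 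The resulting inequality is a competition between: a good positive concavity term $\sum F^{i\bar j} |\nabla\nabla u|^{2}/\lambda_{1}$; a negative third-order term to be absorbed via Cauchy--Schwarz; several almost-complex commutator terms involving the Nijenhuis tensor, its covariant derivatives, and the Chern torsion, which must be dominated by $C\sum F^{i\bar i}$ arising from a suitable tuning of $\varphi$ and $\psi$; and Sz\'ekelyhidi's key lemma applied at $\underline{u}$, which produces either a definite positive piece from $\sum F^{i\bar i}(\mu_{i}(\omega_{\underline{u}}) - \mu_{i})$ or a large lower bound on $\sum F^{i\bar i}$. The absorption of the almost-complex commutator terms, some of which are genuinely third order, is the hard part; it requires the more delicate refinement of the auxiliary function and the careful splitting of the maximum-point analysis into cases, as in \cite{CTW19}.

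Once $\|u\|_{C^{2}(M,\chi)}$ is controlled, the equation $F(\omega_{u}) = h$ is uniformly elliptic on the admissible cone, and concavity of $f$ allows one to invoke the Evans--Krylov theorem in its complex / almost complex form to obtain a $C^{2,\alpha_{0}}$ bound for some $\alpha_{0}\in(0,1)$. Standard Schauder bootstrapping then upgrades this to the stated estimate for every $\alpha\in(0,1)$. The hardest single step is, as indicated, the real Hessian estimate; the remaining steps are essentially adaptations of the existing Hermitian theory, with the Nijenhuis and torsion terms treated systematically as lower order perturbations.
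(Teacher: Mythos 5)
Your overall hierarchy of estimates (zeroth order via an Alexandrov--Bakelman--Pucci argument, gradient via blow-up and a Liouville theorem, Hessian via a maximum principle applied to $\log\lambda_{1}$ plus auxiliary functions, then $C^{2,\alpha}$ via a complex Evans--Krylov result) matches the paper's strategy, and you correctly identify the real Hessian estimate and the bound $\sup_{M}|\nabla^{2}u|_{\chi}\leq C\sup_{M}|\partial u|^{2}_{\chi}+C$ as the core difficulty feeding into the blow-up. However, there is a genuine gap in the Hessian step: your test quantity
\[
Q = \log \lambda_{1}(\nabla^{2}u) + \varphi(|\nabla u|_{\chi}^{2}) + \psi(u - \underline{u})
\]
is essentially the CTW19/CM21 ansatz, and the paper explicitly explains that this does not close for a general concave $F$ on an almost Hermitian manifold: CTW19's argument exploits the special structure of the Monge--Amp\`{e}re operator, while CM21 relies on Hermitian integrability and on prior complex Hessian estimates that are unavailable here.

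Concretely, the paper must enlarge the quantity to $Q = \log\lambda_{1} + \xi(|\rho|^{2}) + \eta(|\partial u|^{2}) + e^{-Au}$, where $\rho = \nabla^{2}u + N\chi$ and $N = \sup_{M}|\nabla^{2}u|+1$. The extra term $\xi(|\rho|^{2})$ is not cosmetic: differentiating it under the linearised operator produces the positive third-order term $G_{3} = \sum_{\alpha,\beta}F^{i\bar i}|e_{i}(u_{\alpha\beta})|^{2}/(C_{A}\lambda_{1}^{2})$, and this is what absorbs the piece $B_{33}$ of the main negative term $B_{3}$, a sum of third derivatives $V_{1}e_{i}\bar e_{q}u$ with both indices in the ``bad'' set. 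Neither the concavity term $G_{2}$ nor the $\lambda_{1}$-Hessian term $G_{1}$ can dominate $B_{33}$, so without $\xi(|\rho|^{2})$ the maximum principle argument does not close. In addition, the absorption requires a precise combinatorial structure — a modified index set $S$, a set $I$, and a new set $J$, with a three-case split according to whether $J$ and $S$ are empty — and a lower bound $\omega_{u}\geq -C_{A}K\chi$ derived from the membership $n\in I$; your ``careful splitting into cases'' does not capture this machinery. The Nijenhuis/torsion commutators are not merely lower-order perturbations either: they generate a genuine third-order term $T$ that must itself be traded against $G_{1}$ and the $\lambda_{1}$-gradient term by Cauchy--Schwarz, which is another place where the specific choices of $\xi$, $\eta$ and the index sets interlock.
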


Using Theorem \ref{main estimate}, we can solve the complex Hessian equation and the Monge--Amp\`ere equation for $(n-1)$-plurisubharmonic equations on compact almost Hermitian manifolds. For the definitions of $k$-positivity and $\Gamma_{k}(M,\chi)$, we refer the reader to Section \ref{applications}.

\begin{theorem}\label{complex Hessian equation}
Let $(M,\chi,J)$ be a compact almost Hermitian manifold of real dimension $2n$ and $\omega$ be a smooth $k$-positive real $(1,1)$-form. For any integer $1\leq k\leq n$, there exists a unique pair $(u,c)\in C^{\infty}(M)\times\mathbb{R}$ such that
\begin{equation}\label{complex Hessian equation 1}
\begin{cases}
\ \omega_{u}^{k}\wedge \chi^{n-k}=e^{h+c}\chi^{n}, \\[1mm]
\ \omega_{u}\in\Gamma_{k}(M,\chi), \\[0.5mm]
\ \sup_{M}u = 0.
\end{cases}
\end{equation}
\end{theorem}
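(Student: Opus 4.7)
The plan is to apply the continuity method, with Theorem \ref{main estimate} providing the $C^{2,\alpha}$ closedness. Writing the equation in the form $f(\mu(u))=h$ with $f(\mu)=\sigma_k(\mu)^{1/k}$ on the G\r{a}rding cone $\Gamma=\Gamma_k$, the function $f$ is smooth, symmetric, concave, satisfies $f_i>0$, vanishes on $\partial\Gamma_k$, and tends to infinity along every ray in $\Gamma_k$, so the structural conditions (i)--(iii) hold as soon as the right-hand side is uniformly bounded. Set $G:=\log\bigl(\omega^k\wedge\chi^{n-k}/\chi^n\bigr)\in C^\infty(M)$, which is well defined because the $k$-positivity of $\omega$ forces $\omega^k\wedge\chi^{n-k}>0$, and consider the family
\begin{equation*}
\omega_{u_t}^k\wedge\chi^{n-k}=e^{th+(1-t)G+c_t}\chi^n,\quad \omega_{u_t}\in\Gamma_k(M,\chi),\quad \sup_M u_t=0,\qquad t\in[0,1].
\end{equation*}
At $t=0$ the pair $(u_0,c_0)=(0,0)$ is an obvious solution, and $t=1$ recovers \eqref{complex Hessian equation 1}. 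Let $T\subseteq[0,1]$ denote the set of $t$ for which a smooth solution $(u_t,c_t)$ exists; the goal is $T=[0,1]$.

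Openness of $T$ follows from the implicit function theorem applied to
\[
\Phi:\{v\in C^{2,\alpha}(M):\sup_M v=0\}\times\mathbb{R}\longrightarrow C^{0,\alpha}(M),\qquad (v,c)\mapsto \log\tfrac{(\omega+\ddbar v)^k\wedge\chi^{n-k}}{\chi^n}-c.
\]
Its linearization at a solution has the form $L_{u_t}v-\dot c$ with $L_{u_t}$ a linear elliptic operator of no zeroth-order term; the normalization $\sup v=0$ quotients out its constant kernel, and the strong maximum principle combined with the Fredholm alternative make $(v,\dot c)\mapsto L_{u_t}v-\dot c$ a Banach space isomorphism. For closedness the key observation is that $\underline u\equiv 0$ is a $\mathcal{C}$-subsolution of the $t$-equation uniformly in $t\in[0,1]$, since $\omega\in\Gamma_k(M,\chi)$ and the right-hand exponent is uniformly bounded (once $c_t$ is controlled). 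Theorem \ref{main estimate} then delivers uniform $C^{2,\alpha}$ bounds, and standard Schauder bootstrapping together with Arzel\`a--Ascoli places any limit point of $T$ back in $T$.

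The main obstacle is therefore the zero-order estimate on $(u_t,c_t)$. Upper and lower bounds on $c_t$ follow by evaluating the equation at $\max u_t=0$ and $\min u_t$ and using $\sigma_k^{1/k}(\mu)\le C\operatorname{tr}(\mu)$, so the problem reduces to controlling $\operatorname{osc}_M u_t$. This is the most delicate step in the almost Hermitian setting because the $\partial\bar\partial$-lemma and Cherrier-type integrations by parts used in the K\"ahler and Hermitian cases break down; nevertheless a Moser iteration in the spirit of \cite{Szekelyhidi18,Zhang17}, adapted so that the error terms produced by the torsion of $\chi$ and by the non-integrability of $J$ are absorbed via Sobolev and Poincar\'e inequalities on $(M,\chi)$, should yield the required oscillation bound. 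With these estimates the continuity argument closes at $t=1$.

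For uniqueness, suppose $(u_1,c_1)$ and $(u_2,c_2)$ both solve \eqref{complex Hessian equation 1}. At a maximum of $u_1-u_2$ one has $\ddbar(u_1-u_2)\le 0$ as a $(1,1)$-form, hence $\omega_{u_1}\le\omega_{u_2}$ there; since both metrics lie in $\Gamma_k(M,\chi)$, this yields $e^{h+c_1}\le e^{h+c_2}$ at that point, and therefore $c_1\le c_2$. By symmetry $c_1=c_2$. Writing the resulting identity $\omega_{u_1}^k\wedge\chi^{n-k}=\omega_{u_2}^k\wedge\chi^{n-k}$ as $L w=0$ with $w=u_1-u_2$ and $L$ the linear elliptic operator (without zeroth-order term) obtained by integrating the linearization of $\sigma_k^{1/k}$ along the segment from $u_1$ to $u_2$, the strong maximum principle forces $w$ to be constant, and $\sup u_i=0$ then gives $w\equiv 0$.
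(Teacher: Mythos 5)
Your proposal follows essentially the same continuity-method route as the paper: the same one-parameter family $\omega_{u_t}^k\wedge\chi^{n-k}=e^{th+(1-t)G+c_t}\chi^n$, the same maximum-principle bound on $c_t$, the same use of $\underline u\equiv 0$ as a $\mathcal{C}$-subsolution (from the $k$-positivity of $\omega$), the same appeal to Theorem \ref{main estimate} for closedness, and the same Fredholm/maximum-principle argument for openness. Two points need repair.

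First, the paragraph on the zero-order estimate is both unnecessary and the weakest part of your argument. You write that ``the problem reduces to controlling $\operatorname{osc}_M u_t$'' and then propose a Moser iteration that ``should yield the required oscillation bound.'' But nothing of the kind is needed: the constant in Theorem \ref{main estimate} depends only on $\alpha$, $\underline u$, $h$, $\omega$, $f$, $\Gamma$ and $(M,\chi,J)$, \emph{not} on any a priori $L^\infty$ bound, precisely because the $C^0$ estimate is already part of the proof of Theorem \ref{main estimate}. In the paper that estimate is Proposition \ref{Prop3.2}, which is proved by the Alexandrov--Bakelman--Pucci method of B\l ocki and Sz\'ekelyhidi (not Moser iteration), and this is exactly the mechanism that avoids the $\ddbar$-lemma and Cherrier-type integrations you correctly note are unavailable in the almost Hermitian setting. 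So once $c_t$ is bounded and $\underline u\equiv 0$ is checked to be a $\mathcal{C}$-subsolution, Theorem \ref{main estimate} immediately gives the full $C^{2,\alpha}$ bound; the Moser-iteration step should be deleted.

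Second, for openness you apply the implicit function theorem on the set $\{v\in C^{2,\alpha}(M):\sup_M v=0\}$, but this is not a linear subspace (and the constraint $\sup v=0$ is not smooth), so the IFT cannot be applied to it directly. The paper instead works on the affine Banach slice
\[
\mathcal U^{2,\alpha}=\Big\{\phi\in C^{2,\alpha}(M):\ \omega_\phi\in\Gamma_k(\chi),\ \int_M\phi\,\xi\,\mathrm{dvol}_k=0\Big\},
\]
where $\xi>0$ spans $\mathrm{Ker}(L_{u_{\hat t}}^*)$. This is a genuine Banach manifold, on which the linearization $(\psi,\dot c)\mapsto L_{u_{\hat t}}\psi-\dot c$ is an isomorphism by the Fredholm alternative together with the strong maximum principle. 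After solving there, one recovers the normalization $\sup_M u_t=0$ simply by subtracting a constant, which leaves $\ddbar u_t$ unchanged. With these two corrections your argument matches the paper's.
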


\begin{theorem}\label{n-1 MA equation}
Let $(M,\chi,J)$ be a compact almost Hermitian manifold of real dimension $2n$ and $\eta$ be an almost Hermitian metric. There exists a unique pair $(u,c)\in C^{\infty}(M)\times\mathbb{R}$ such that
\begin{equation}\label{n-1 MA equation 1}
\begin{cases}
\ \left(\eta+\frac{1}{n-1}\big((\Delta^{C} u)\chi-\ddbar u\big)\right)^{n} = e^{h+c}\chi^{n}, \\[2mm]
\ \eta+\frac{1}{n-1}\big((\Delta^{C} u)\chi-\ddbar u\big) > 0, \\[1.5mm]
\ \sup_{M}u = 0.
\end{cases}
\end{equation}
\end{theorem}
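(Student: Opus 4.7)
The plan is to solve \eqref{n-1 MA equation 1} by the continuity method, using Theorem~\ref{main estimate} for closedness. To fit \eqref{n-1 MA equation 1} into the framework of \eqref{nonlinear equation}, I set
\[
\omega_u := \eta + \tfrac{1}{n-1}\bigl((\Delta^{C} u)\chi - \ddbar u\bigr),
\]
denote by $\mu_1,\ldots,\mu_n$ the eigenvalues of $\omega_u$ with respect to $\chi$, and take $f(\mu)=\log(\mu_1\cdots\mu_n)$ on $\Gamma=\Gamma_n$, so that \eqref{n-1 MA equation 1} reads $f(\mu(u))=h+c$ with $\omega_u>0$. The structural conditions (i)--(iii) are classical for $f=\log\det$. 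The map $\ddbar u \mapsto \omega_u$ is an invertible affine transformation rather than the translation $\omega + \ddbar u$ used in Theorem~\ref{main estimate}, but since the transformation is linear in the second derivatives it preserves the ellipticity and concavity structure underlying Theorem~\ref{main estimate}; I expect the estimate of Theorem~\ref{main estimate} to carry over with only notational changes to this setting.

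I then run the continuity path
\[
\omega_{u_t}^n = e^{th + (1-t)H + c_t}\,\chi^n, \qquad \sup_M u_t = 0, \qquad t\in[0,1],
\]
where $H:=\log(\eta^n/\chi^n)$, so that $(u_0,c_0)=(0,0)$ solves at $t=0$. Openness of the solvable set $T\subset[0,1]$ comes from the implicit function theorem in $C^{2,\alpha}(M)\times\mathbb{R}$: the linearization at a given solution is a uniformly elliptic second-order operator, and the one-dimensional freedom in $u_t$ is absorbed by the Lagrange multiplier $c_t$. For closedness I need uniform a priori estimates. The $C^0$ bound follows from an Alexandrov--Bakelman--Pucci argument adapted to the almost Hermitian setting as in \cite{CTW19,Szekelyhidi18}; the constant $c_t$ is controlled by integrating the equation against $\chi^n$ together with the $C^0$ bound; then Theorem~\ref{main estimate} produces a uniform $C^{2,\alpha}$ estimate, and Schauder bootstrap supplies smooth bounds. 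To apply Theorem~\ref{main estimate} I take the natural $\mathcal{C}$-subsolution $\underline u \equiv 0$; indeed $\omega_{\underline u}=\eta>0$ so $\mu(\underline u)\in\Gamma_n$, and the $\mathcal{C}$-subsolution condition reduces to the fact that the slice $\{\mu \in \mu(\underline u)(x)+\Gamma_n : \log\det\mu = h(x)+c_t\}$ is bounded, which holds tautologically for $f=\log\det$.

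Uniqueness is handled by the maximum principle. At a maximum point of $v:=u_1-u_2$ with $\ddbar v \leq 0$, each eigenvalue $\sum_{j\neq i}\lambda_j$ of $(\Delta^C v)\chi - \ddbar v$ is non-positive, so $\omega_{u_1}\leq\omega_{u_2}$ there, forcing $c_1\leq c_2$; the symmetric argument at a minimum of $v$ gives $c_1\geq c_2$, hence $c_1=c_2$, and then $u_1 \equiv u_2$ follows from the strong maximum principle applied to the linearized difference equation. The main obstacle I anticipate is verifying that Theorem~\ref{main estimate} genuinely covers the affine dependence $\omega_u = \eta + \tfrac{1}{n-1}((\Delta^C u)\chi - \ddbar u)$ rather than only the translation $\omega+\ddbar u$; this requires re-examining the proofs of the $C^1$ and $C^2$ estimates of Theorem~\ref{main estimate} and confirming that no step relies on the specific translation form. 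The almost-complex torsion terms that arise when $J$ is not integrable, which in \cite{CTW19} and here complicate commutation of $\partial$ and $\bar\partial$ with the Laplacian, are the most delicate ingredient in that verification.
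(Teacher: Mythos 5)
The central step in your reduction is where the proposal falls short of a proof. You define $\omega_u := \eta + \tfrac{1}{n-1}\bigl((\Delta^C u)\chi - \ddbar u\bigr)$ and take $f=\log\det$ on $\Gamma_n$, then acknowledge that this $\omega_u$ is \emph{not} of the form $\omega + \ddbar u$ for a fixed background form $\omega$ — it depends on $u$ through the affine map $\ddbar u \mapsto \tfrac{1}{n-1}\bigl((\mathrm{tr}_\chi\ddbar u)\chi - \ddbar u\bigr)$. Theorem~\ref{main estimate} is stated and proved for $\omega_u = \omega + \ddbar u$; the entire second-order estimate in Section~\ref{second order estimate} (the choice of unitary frame diagonalizing $\tilde g_{i\bar j}$, the formulas \eqref{second derive of F}, the differentiated-equation identities such as \eqref{claim 1 eqn 3}, and the bound of $\omega_u$ via Lemma~\ref{nu}) is carried out with $\tilde g_{i\bar j} = g_{i\bar j} + e_i\bar e_j u - [e_i,\bar e_j]^{(0,1)}u$, and replacing this by the affine combination you propose would require re-deriving essentially all of it, not merely a notational substitution. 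You flag this yourself as ``the main obstacle'' and ``re-examining the proofs,'' but you do not carry out that re-examination, so the closedness step of your continuity argument is not actually proved.

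The paper sidesteps this entirely with a change of operator rather than a change of $\omega_u$: it takes the linear map $T(\mu)_k = \tfrac{1}{n-1}\sum_{i\neq k}\mu_i$, sets $f = \log\sigma_n(T)$, $\Gamma = T^{-1}(\Gamma_n)$, and $\omega = (\mathrm{tr}_\chi\eta)\chi - (n-1)\eta$. Because the map $A \mapsto \tfrac{1}{n-1}\bigl((\mathrm{tr}_\chi A)\chi - A\bigr)$ acts on eigenvalues exactly by $T$ and sends this $\omega$ to $\eta$, one computes $\eta + \tfrac{1}{n-1}\bigl((\Delta^C u)\chi-\ddbar u\bigr)^n = e^{h+c}\chi^n$ is literally $F(\omega_u)=h+c$ with $\omega_u = \omega + \ddbar u$, and the positivity condition is literally $\mu(\omega_u)\in\Gamma$. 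Then Theorem~\ref{main estimate} applies verbatim, with no modification to its proof. This is the standard trick of Sz\'ekelyhidi for the $(n-1)$-MA operator and it is precisely the ingredient your proposal is missing: you should absorb the affine transformation into $f$ and $\Gamma$, not into $\omega_u$. Your $\mathcal{C}$-subsolution check and uniqueness argument are essentially correct but should also be rephrased in the transformed framework ($\underline u = 0$ gives $\mu(\omega)\in\Gamma = T^{-1}(\Gamma_n)$ since $T(\mu(\omega))=\mu(\eta)>0$), after which the continuity method runs exactly as in the proof of Theorem~\ref{complex Hessian equation}.
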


We now discuss the proof of Theorem \ref{main estimate}. The zero order estimate can be proved by adapting the arguments of \cite[Proposition 11]{Szekelyhidi18} and \cite[Proposition 3.1]{CTW19} which are based on the method of B\l ocki \cite{Blocki05,Blocki11}. For the second order estimate, we first show that the real Hessian can be controlled by the gradient quadratically as follows:
\begin{equation}\label{estimate introduction}
\sup_{M}|\nabla^{2}u|_{\chi} \leq C\sup_{M}|\partial u|_{\chi}^{2}+C.
\end{equation}
Then the second order estimate follows from the blowup argument and Liouville type theorem \cite[Theorem 20]{Szekelyhidi18}. To prove \eqref{estimate introduction}, one may follow the arguments of \cite[Proposition 5.1]{CTW19} and \cite[Theorem 4.1]{CM21}. However, these arguments do not seem to work for \eqref{nonlinear equation} in the almost Hermitian setting. Precisely, although \cite{CTW19} investigated the complex Monge--Amp\`ere equation on compact almost Hermitian manifolds, the argument of \cite[Proposition 5.1]{CTW19} depends heavily on the special structure of Monge--Amp\`ere operator. It is hard to generalize it to \eqref{nonlinear equation} directly. The estimate \eqref{estimate introduction} for the general equation \eqref{nonlinear equation} was considered in \cite{CM21}. But the underlying manifold in \cite{CM21} is only Hermitian, and the real Hessian estimate \cite[Theorem 4.1]{CM21} is built on the a priori estimates (including complex Hessian estimate) of \cite{Szekelyhidi18}. These make the situation considered in \cite{CM21} simpler than the almost Hermitian setting.

The proof of \eqref{estimate introduction} is the heart of this paper. We apply the maximum principle to a quantity involving the largest eigenvalue of $\nabla^{2}u$ as in \cite{CTW19,CM21}. The main task is to control negative third order terms by positive third order terms. We will use the ideas of \cite[Proposition 5.1]{CTW19} to deal with the terms arising from the non-integrability of the almost complex structure, and refine the techniques of  \cite[Theorem 4.1]{CM21} to control negative terms. More precisely, we modify the definitions of index sets $S$ and $I$ in \cite[Section 4.2]{CM21}, and introduce a new index set $J$. According to the index sets $J$ and $S$, we divide the proof of \eqref{estimate introduction} into three cases. The third case ($J,S\neq\emptyset$) is the most difficult case. We split the bad third order term $B$ (see \eqref{G3 B F}) into three terms $B_{1}$, $B_{2}$, $B_{3}$ (see \eqref{definition Bi}).  For the terms $B_{1}$ and $B_{2}$, since the constant $C$ in \eqref{estimate introduction} should be independent of $\sup_{M}|\partial u|_{\chi}^{2}$, we may not apply the argument of \cite[Section 4.2.1]{CM21} directly. Thanks to the definition of modified set $I$ and new set $J$, terms $B_{1}$ and $B_{2}$ can be controlled by some lower order terms (see Lemma \ref{bad terms 1 2}). For the main negative term $B_{3}$, we first derive a lower bound of $\omega_{u}$, which depends on $\sup_{M}|\de u|_{\chi}$ quadratically (see Lemma \ref{nu}). Combining this lower bound and some delicate calculations, we control $B_{3}$ by positive terms $G_{1}, G_{2}, G_{3}$ (see \eqref{G1 G2}, \eqref{G3 B F} and Lemma \ref{B 3}). Here $G_{2}$ comes from the concavity of equation \eqref{nonlinear equation}, and $G_{1}$, $G_{3}$ are provided by $\log\lambda_{1}$, $\xi(|\rho|^{2})$ in the quantity $Q$, respectively.

The paper is organized as follows. In Section \ref{preliminaries}, we will introduce some notations, recall definition and an important property of $\mathcal{C}$-subsolution. The zero order estimate will be established in Section \ref{zero order estimate}. We will derive key inequality \eqref{estimate introduction} in Section \ref{second order estimate}, and complete the proof of Theorem \ref{main estimate} in Section \ref{proof of main estimate}. In Section \ref{applications}, we will prove Theorem \ref{complex Hessian equation} and \ref{n-1 MA equation}.

\section{Preliminaries}\label{preliminaries}
\subsection{Notations}
Recall that $(M,\chi,J)$ is an almost Hermitian manifold of real dimension $2n$. Using the almost complex structure $J$, we can define $(p,q)$-form and operators $\de$, $\dbar$ (see e.g. \cite[p. 1954]{CTW19}). Write
\[
A^{1,1}(M) = \big\{ \alpha: \text{$\alpha$ is a smooth real (1,1)-forms on $(M,J)$} \big\}.
\]
For any $u\in C^{\infty}(M)$, we see that (see e.g. \cite[p. 1954]{CTW19})
\[
\ddbar u = \frac{1}{2}(dJdu)^{(1,1)}
\]
is a real $(1,1)$-form in $A^{1,1}(M)$.

For any point $x_{0}\in M$, let $\{e_{i}\}_{i=1}^{n}$ be a local unitary $(1,0)$-frame with respect to $\chi$ near $x_{0}$. Denote its dual coframe by $\{\theta^{i}\}_{i=1}^{n}$. Then we have
\[
\chi=\sqrt{-1}\delta_{ij}\theta^{i}\wedge\ov{\theta}^{j}.
\]
Suppose that
\[
\omega = \sqrt{-1}g_{i\ov{j}}\theta^{i}\wedge\ov{\theta}^{j}, \quad
\omega_{u} = \sqrt{-1}\ti{g}_{i\ov{j}}\theta^{i}\wedge\ov{\theta}^{j}.
\]
Then we have (see e.g. \cite[(2.5)]{HL15})
\[
\ti{g}_{i\ov{j}} = g_{i\ov{j}}+(\de\dbar u)(e_{i},\ov{e}_{j})
= g_{i\ov{j}}+e_{i}\overline{e}_{j}(u)-[e_{i},\overline{e}_{j}]^{(0,1)}(u),
\]
where $[e_{i}, \bar{e}_{j}]^{(0,1)}$ is the $(0,1)$-part of the Lie bracket $[e_{i}, \bar{e}_{j}]$. Define
\begin{equation*}
F^{i\overline{j}}=\frac{\partial F}{\de\ti{g}_{i\ov{j}}}, \quad
F^{i\overline{j},k\overline{l}}=\frac{\partial^{2}F}{\de\ti{g}_{i\ov{j}}\de\ti{g}_{k\ov{l}}}.
\end{equation*}
After making a unitary transformation, we may assume that  $\tilde{g}_{i\overline{j}}(x_{0})=\delta_{ij}\tilde{g}_{i\overline{i}}(x_{0})$.
We denote $\tilde{g}_{i\overline{i}}(x_{0})$ by $\mu_{i}$.
It is useful to order $\{\mu_{i}\}$ such that
\begin{equation}\label{mu order}
\mu_{1}\geq\mu_{2}\geq\cdots\geq\mu_{n}.
\end{equation}
At $x_{0}$, we have the expressions of $F^{i\ov{j}}$ and $F^{i\bar{j},k\bar{l}}$ (see e.g. \cite{Andrews94,Gerhardt96,Spruck05})
\begin{equation}\label{second derive of F}
F^{i\ov{j}} = \delta_{ij}f_{i},\quad
F^{i\bar{j},k\bar{l}}=f_{ik}\delta_{ij}\delta_{kl}
+\frac{f_{i}-f_{j}}{\mu_{i}-\mu_{j}}
(1-\delta_{ij})\delta_{il}\delta_{jk},
\end{equation}
where the quotient is interpreted as a limit if $\mu_{i}=\mu_{j}$. Using \eqref{mu order}, we obtain (see e.g. \cite{EH89,Spruck05})
\begin{equation}\label{F ii 1}
F^{1\ov{1}} \leq F^{2\ov{2}} \leq \cdots \leq F^{n\ov{n}}.
\end{equation}

On the other hand, the linearized operator of equation \eqref{nonlinear equation} is given by
\begin{equation}\label{L}
L:=\sum_{i,j}F^{i\bar{j}}(e_{i}\bar{e}_{j}-[e_{i},\bar{e}_{j}]^{0,1}).
\end{equation}
Note that $[e_{i},\bar{e}_{j}]^{(0,1)}$ is a first order differential operator, and so $L$ is a second order elliptic operator.

\subsection{$\mathcal{C}$-subsolution}
\begin{defn}[Definition 1 of \cite{Szekelyhidi18}]\label{sub}
We say that a smooth function $\underline{u}:M\rightarrow\mathbb{R}$ is a $\mathcal{C}$-subsolution of \eqref{nonlinear equation} if at each point $x\in M$, the set
\begin{equation*}
\left\{\mu\in \Gamma: f(\mu)=h(x) \text{\ and\ }  \mu-\mu(\underline{u})\in \Gamma_{n}\right\}
\end{equation*}
is bounded, where $\mu(\underline{u})=(\mu_{1}(\underline{u}),\ldots,\mu_{n}(\underline{u}))$ denote the eigenvalues of $\omega+\ddbar\underline u$ with respect to $\chi$.
\end{defn}
For $\sigma\in(\sup_{\de\Gamma}f, \sup_{\Gamma}f)$, we denote
\begin{equation}\label{level set}
	\Gamma^{\sigma}=\{\mu\in \Gamma: f(\mu)> \sigma\}.
\end{equation}
By Definition \ref{sub}, for any $\mathcal{C}$-subsolution $\underline{u}$, there are constants $\delta, R>0$ depending only on $\underline{u}$, $(M,\chi,J)$, $f$ and $\Gamma$ such that at each $x\in M$ we have
\begin{equation}\label{2..12}
	(\mu(\underline{u})-\delta\textbf{1}+\Gamma_{n})\cap \partial\Gamma^{h(x)}\subset B_{R}(0),
\end{equation}
where $\textbf{1}=(1,\ldots,1)\in\mathbb{R}^{n}$ and $B_{R}(0)\subset\mathbb{R}^{n}$ denotes the Euclidean ball with radius $R$ and center $0$.

The following proposition follows from \cite[Lemma 9]{Szekelyhidi18} and \cite[Proposition 6]{Szekelyhidi18} (which is a refinement of \cite[Theorem 2.18]{Guan14}). It will play an important role in the proof of Theorem \ref{Thm4.1}.

\begin{proposition}\label{prop subsolution}
Let $\sigma\in[\inf_{M}h,\sup_{M}h]$ and $A$ be a Hermitian matrix with eigenvalues $\mu(A)\in \partial \Gamma^{\sigma}$.
\begin{enumerate}\setlength{\itemsep}{1mm}
\item There exists a constant $\tau$ depending only on $f$, $\Gamma$ and $\sigma$ such that
\[
\mathcal{F}(A) := \sum_{i}F^{i\ol{i}}(A) > \tau.
\]
\item For $\delta, R>0$, there exists $\theta>0$ depending only on $f$, $\Gamma$, $h$, $\delta$, $R$ such that the following holds. If $B$ is a Hermitian matrix satisfying
\[
(\mu(B)-2\delta{\bf 1}+\Gamma_{n})\cap\partial\Gamma^{\sigma}\subset B_{R}(0),
\]
then we  have either
\[
\sum_{p,q}F^{p\ol{q}}(A)[B_{p\ol{q}}-A_{p\ol{q}}]>\theta\sum_{p}F^{p\ol{p}}(A)
\]
or
\[
F^{i\ol{i}}(A)>\theta\sum_{p}F^{p\ol{p}}(A), \quad \text{for all $i$}.
\]
\end{enumerate}
\end{proposition}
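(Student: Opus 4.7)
The plan is to follow the arguments of \cite[Lemma 9 and Proposition 6]{Szekelyhidi18}, the two parts being essentially independent.

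For part (1), the key tool is concavity of $f$ combined with conditions (ii) and (iii). Fix $R>0$ large enough that $f(R\mathbf{1})\geq \sigma+1$, which is possible by (iii) since $\sigma<\sup_{\Gamma}f$. For any $A$ with $\mu(A)=\mu\in\partial\Gamma^{\sigma}$, concavity of $f$ at $\mu$ with test point $R\mathbf{1}$ yields
\[
1\;\leq\;f(R\mathbf{1})-f(\mu)\;\leq\;\sum_{i}f_{i}(\mu)(R-\mu_{i})\;=\;R\sum_{i}f_{i}(\mu)-\sum_{i}f_{i}(\mu)\mu_{i}.
\]
A second application of concavity, with test point taken in $\partial\Gamma$ (using condition (ii) to bound $\sup_{\partial\Gamma}f$), controls $\sum_{i}f_{i}(\mu)\mu_{i}$ from above. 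Combining the two bounds extracts a uniform lower bound $\mathcal{F}(A)=\sum_{i}f_{i}(\mu)\geq \tau$, where $\tau$ depends only on $f$, $\Gamma$ and $\sigma$.

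For part (2), by unitary invariance we may diagonalize $A$ with eigenvalues $\lambda=\mu(A)\in\partial\Gamma^{\sigma}$, so that $F^{p\bar{q}}(A)=\delta_{pq}f_{p}(\lambda)$. In this frame the desired statement reads: either
\[
\sum_{i}f_{i}(\lambda)(B_{i\bar{i}}-\lambda_{i})\;>\;\theta\sum_{i}f_{i}(\lambda) \qquad\text{or}\qquad f_{i}(\lambda)>\theta\sum_{j}f_{j}(\lambda)\text{ for all }i.
\]
Proceed by dichotomy. If every $f_{i}(\lambda)$ dominates $\theta\sum_{j}f_{j}(\lambda)$ for a suitably small $\theta>0$, the second alternative holds. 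Otherwise there is an index $k$ with $f_{k}(\lambda)\leq \theta\sum_{j}f_{j}(\lambda)$, and the ordering \eqref{F ii 1} forces $f_{1}(\lambda)$ to be the smallest while the remaining $f_{j}$'s account for a definite fraction of $\sum_{j}f_{j}(\lambda)$. The hypothesis on $B$ produces a point $\nu^{\ast}\in \partial\Gamma^{\sigma}\cap B_{R}(0)$ with $\mu(B)-2\delta\mathbf{1}\preceq \nu^{\ast}$ componentwise (after reordering). Concavity of $f$ at $\lambda$ with test point $\nu^{\ast}$ (both on the level set $\partial\Gamma^{\sigma}$) gives $\sum_{i}f_{i}(\lambda)(\nu^{\ast}_{i}-\lambda_{i})\geq 0$, and substituting this back into the target sum, combined with the Schur--Horn majorization relating $B_{i\bar{i}}$ to $\mu_{i}(B)$, shows that $\sum_{i}f_{i}(\lambda)(B_{i\bar{i}}-\lambda_{i})$ exceeds a positive multiple of $\delta$ times the $f_{i}$'s that are not small, yielding the first alternative.

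The main obstacle is producing $\theta>0$ uniformly as the eigenvalue vector $\mu(A)$ ranges over the unbounded level set $\partial\Gamma^{\sigma}$ and as $B$ varies. This uniformity is exactly what the bounded-intersection hypothesis $(\mu(B)-2\delta\mathbf{1}+\Gamma_{n})\cap\partial\Gamma^{\sigma}\subset B_{R}(0)$ is designed to guarantee: it encodes that $B$ behaves as a $\mathcal{C}$-subsolution relative to every point of $\partial\Gamma^{\sigma}$, not merely one, which is what makes the resulting constants depend only on $f$, $\Gamma$, $h$, $\delta$ and $R$.
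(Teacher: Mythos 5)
The paper does not reprove these facts from scratch: part (1) is a direct citation of \cite[Lemma 9(b)]{Szekelyhidi18}, and part (2) cites \cite[Proposition 6]{Szekelyhidi18} for the regime $|\mu(A)|>R$ and handles $|\mu(A)|\leq R$ by a short compactness argument on the set $S_{R,\sigma}=\{N\in\mathcal{H}_n : \mu(N)\in\overline{B_R(0)}\cap\partial\Gamma^\sigma\}$, which yields the second alternative there. Your proposal instead tries to rederive both results from concavity alone, so it is a genuinely different route; unfortunately it has gaps.

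In part (1), the direction of your second estimate is reversed. From $1\leq f(R\mathbf{1})-f(\mu)\leq R\sum_i f_i(\mu)-\sum_i f_i(\mu)\mu_i$, you need a \emph{lower} bound on $\sum_i f_i(\mu)\mu_i$ (e.g.\ $\geq 0$) to conclude $\sum_i f_i(\mu)\geq 1/R$. You propose to control $\sum_i f_i(\mu)\mu_i$ ``from above'' by sending the test point into $\partial\Gamma$ and invoking condition (ii); but concavity with $\nu=t\mu$, $t\to 0^+$, gives $\sum_i f_i(\mu)\mu_i\leq f(\mu)-\limsup_{t\to 0}f(t\mu)$, an upper bound that is useless here. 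The correct second application is with $\nu=t\mu$ and $t\to\infty$, using condition (iii): $\sum_i f_i(\mu)\mu_i\geq\frac{f(t\mu)-f(\mu)}{t-1}$, and since $\lim_{t\to\infty}f(t\mu)>\sigma=f(\mu)$ this gives $\sum_i f_i(\mu)\mu_i\geq 0$.

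In part (2), the dichotomy ``either the second alternative holds or it doesn't'' is circular and does not by itself organize a proof; the real dichotomy in the cited argument (and in the paper) is on whether $|\mu(A)|>R$ or $|\mu(A)|\leq R$, with the latter disposed of by compactness of $S_{R,\sigma}$. Your sketch omits this compactness step entirely. In the $|\mu(A)|>R$ case the combination of the bounded-intersection hypothesis, concavity at $\nu^*$, and Schur--Horn is stated but not carried out, and the inequality you extract from the hypothesis, $\mu(B)-2\delta\mathbf{1}\preceq\nu^*$, points the wrong way for lower-bounding $\sum_i f_i(\lambda)(B_{i\bar i}-\lambda_i)$; the actual mechanism in \cite[Proposition 6]{Szekelyhidi18} is that $|\lambda|>R$ forces $\lambda\notin\mu(B)-2\delta\mathbf{1}+\Gamma_n$, i.e.\ some $\lambda_k<\mu_k(B)-2\delta$, which is what produces the $\theta$-gap. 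As written, the step from the cited inputs to the first alternative is not justified.
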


\begin{proof}
(1) is \cite[Lemma 9 (b)]{Szekelyhidi18}. For (2), when $|\mu(A)|>R$, the conclusion follows from \cite[Proposition 6]{Szekelyhidi18}. When $|\mu(A)|\leq R$, we consider the set
\[
S_{R,\sigma} = \left\{N\in \mathcal{H}_{n}:\mu(N)\in \ov{B_{R}(0)}\cap\partial \Gamma^{\sigma}\right\},
\]
where $\mathcal{H}_{n}$ denotes the set of Hermitian matrices of complex entries of size $n$. It is clear that $S_{R,\sigma}$ is compact, and then there exists a constant $C>0$ such that for $A\in S_{R,\sigma}$,
\[
C^{-1} \leq F^{i\ov{i}}(A) \leq C, \quad \text{for all $i$}.
\]
Decreasing $\theta$ if necessary,
\[
F^{i\ol{i}}(A)>\theta\sum_{p}F^{p\ol{p}}(A), \quad \text{for all $i$}.
\]
\end{proof}

\begin{remark}
In the proof of Proposition \ref{prop subsolution} (2), we use the compactness of the set $S_{R,\sigma}$. Then the constant $\theta$ depends on $\inf_{M}h$, and so Proposition \ref{prop subsolution} is only applicable in the non-degenerate setting. For the analogous result in the degenerate setting, we refer the reader to \cite[Section 3]{CM21}.
\end{remark}

\section{Zero order estimate}\label{zero order estimate}
The proof of zero order estimate will be given in this section. We follow the arguments of \cite[Proposition 11]{Szekelyhidi18} and \cite[Proposition 3.1]{CTW19}, which are generalizations of the arguments in \cite{Blocki05,Blocki11}. We begin with the following proposition.

\begin{proposition}[Proposition 2.3 of \cite{CTW19}]\label{L1}
Let $(M,\chi,J)$ be a compact almost Hermitian manifold. Suppose that $\vp$ satisfies
\begin{equation*}
\sup_{M}\vp = 0, \quad \Delta^{C}\vp \geq -B
\end{equation*}
for some constant $B$, where $\Delta^{C}$ denotes the canonical Laplacian operator of $\chi$. Then there exists a constant $C$ depending only on $B$ and $(M, \chi, J)$ such that
\begin{equation*}
\int_{M}(-\vp)\chi^{n}\leq C.
\end{equation*}
\end{proposition}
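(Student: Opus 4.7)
The plan is to derive the $L^{1}$ bound from a Green's-function representation for the canonical Laplacian $\Delta^{C}$. Since $\Delta^{C}$ is a second-order elliptic operator on the compact manifold $M$ with no zeroth-order term, the strong maximum principle implies $\ker \Delta^{C}=\mathbb{R}$, and by Fredholm theory the formal adjoint $(\Delta^{C})^{*}$ with respect to the volume form $\chi^{n}$ has a one-dimensional kernel spanned by a smooth positive density $\rho$, which we normalize by $V_{\rho}:=\int_{M}\rho\,\chi^{n}$. Standard parametrix constructions for elliptic operators on compact manifolds then yield a Green's function $G(x,y)$ that is smooth off the diagonal, has the usual $|x-y|^{-(2n-2)}$ (or logarithmic) singularity there, is integrable in each variable, and satisfies a uniform one-sided bound $G(x,y)\geq -C_{0}$ for some constant $C_{0}=C_{0}(M,\chi,J)$.

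With $G$ in hand I would use the representation formula
\[
\varphi(x)-\frac{1}{V_{\rho}}\int_{M}\varphi\,\rho\,\chi^{n}=\int_{M}G(x,y)\,(\Delta^{C}\varphi)(y)\,\chi^{n}(y),\qquad x\in M,
\]
and evaluate at a point $x_{0}\in M$ where $\varphi(x_{0})=\sup_{M}\varphi=0$. Writing $G(x_{0},y)=(G(x_{0},y)+C_{0})-C_{0}$, the first factor is nonnegative, so by $\Delta^{C}\varphi\geq -B$ the contribution of $(G+C_{0})\Delta^{C}\varphi$ is bounded below by $-B\int_{M}(G(x_{0},y)+C_{0})\,\chi^{n}(y)$, which is finite by integrability of $G$. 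The remaining term $-C_{0}\int_{M}\Delta^{C}\varphi\,\chi^{n}$ is handled by rewriting $\int_{M}\Delta^{C}\varphi\,\chi^{n}=\int_{M}\varphi\,(\Delta^{C})^{*}1\,\chi^{n}$, noting that $(\Delta^{C})^{*}1$ is a smooth bounded function on $M$ depending only on the torsion of $\chi$. Combining these estimates and rearranging gives $\int_{M}(-\varphi)\,\rho\,\chi^{n}\leq C(B+1)$, and since $\rho$ is bounded below by a positive constant on the compact manifold $M$, this yields the desired bound $\int_{M}(-\varphi)\,\chi^{n}\leq C$.

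The main obstacle is the non-self-adjointness of $\Delta^{C}$ in the almost Hermitian setting: one cannot simply appeal to the symmetric Green's function of the Laplace--Beltrami operator of the Riemannian metric $\chi(\cdot,J\cdot)$, because the torsion of $\chi$ and the non-integrability of $J$ together produce extra first-order terms. This forces one to track the positive density $\rho$ spanning $\ker(\Delta^{C})^{*}$, construct the Green's function with respect to the weighted measure $\rho\,\chi^{n}$, and verify that its one-sided bound is uniform in $(M,\chi,J)$. Once this setup is in place the rest of the argument is standard, parallel to B{\l}ocki's approach cited in the text.
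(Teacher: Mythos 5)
Your strategy --- a Green's-function representation for $\Delta^{C}$ using a positive density $\rho$ spanning $\ker(\Delta^{C})^{*}$ --- is a reasonable route, but the argument as written has a sign inconsistency and a genuine gap. On the sign: the kernel $G$ that makes your representation formula true must satisfy $(\Delta^{C})^{*}_{y}G(x,y)=\delta_{x}(y)-\rho(y)/V_{\rho}$, adjoint taken in $L^{2}(\chi^{n})$, and since the principal part of $\Delta^{C}$ is the analyst's Laplacian, $G(x,\cdot)$ tends to $-\infty$ along the diagonal and is bounded \emph{above}, $G\leq C_{0}$, not below. You need the decomposition $G=(G-C_{0})+C_{0}$ with $G-C_{0}\leq 0$; then $\Delta^{C}\varphi\geq-B$ gives the finite upper bound $\int(G-C_{0})\Delta^{C}\varphi\,\chi^{n}\leq B\int(C_{0}-G)\chi^{n}$. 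With your stated inequality $G\geq-C_{0}$ the pointwise product $(G+C_{0})\Delta^{C}\varphi$ only admits a lower bound, which is useless for bounding $\int(-\varphi)\rho\,\chi^{n}$ from above.

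The more serious problem is the residue from the constant subtraction. You rewrite $C_{0}\int_{M}\Delta^{C}\varphi\,\chi^{n}=C_{0}\int_{M}\varphi\,(\Delta^{C})^{*}1\,\chi^{n}$ and invoke boundedness of $(\Delta^{C})^{*}1$; but since $(\Delta^{C})^{*}1$ is nonzero in general (it measures the failure of $\chi^{n}$ to be invariant under $\Delta^{C}$) this only gives $\bigl|C_{0}\int\Delta^{C}\varphi\,\chi^{n}\bigr|\leq C_{0}\|(\Delta^{C})^{*}1\|_{L^{\infty}}\int_{M}(-\varphi)\chi^{n}$, which is precisely the quantity you are estimating, with a coefficient that has no reason to be small --- absorption would require $C_{0}\|(\Delta^{C})^{*}1\|_{L^{\infty}}<\rho_{\min}/V_{\rho}$, which does not follow from the hypotheses. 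The circularity is removed by subtracting a multiple of $\rho$ rather than a constant: choose $c\geq C_{0}/\rho_{\min}$ so that $G(x_{0},\cdot)\leq c\rho$, and split $G=(G-c\rho)+c\rho$. The first piece gives the same finite bound $B\int(c\rho-G)\chi^{n}$, while the second yields
\[
c\int_{M}\rho\,\Delta^{C}\varphi\,\chi^{n}=c\int_{M}\varphi\,(\Delta^{C})^{*}\rho\,\chi^{n}=0,
\]
which vanishes identically because $\rho\in\ker(\Delta^{C})^{*}$. That is exactly the role of $\rho$: it kills the torsion-induced residue that makes $\Delta^{C}$ non-self-adjoint. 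With this change and the sign corrected, your argument closes and gives $\int_{M}(-\varphi)\rho\,\chi^{n}\leq CB+C$, hence the claim after using $\rho\geq\rho_{\min}>0$.
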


\begin{proof}
Actually, Proposition \ref{L1} is slightly stronger than \cite[Proposition 2.3]{CTW19}. Precisely, the assumption $\chi+\ddbar\vp>0$ in \cite[Proposition 2.3]{CTW19} is replaced by $\Delta^{C}\vp\geq-B$. Fortunately, as the reader may check, under this weak assumption, the argument of \cite[Proposition 2.3]{CTW19} still works.
\end{proof}

\begin{proposition}\label{Prop3.2}
Suppose that $\underline{u}$ is a $\mathcal{C}$-subsolution of \eqref{nonlinear equation}. Let $u$ be a smooth solution of \eqref{nonlinear equation} with $\sup_{M}(u-\underline{u})=0$. Then there exists a constant $C$ depending on $\|\underline{u}\|_{C^{2}}$, $\|h\|_{C^{0}}$, $\|\omega\|_{C^{0}}$, $f$, $\Gamma$ and $(M,\chi,J)$ such that
\begin{equation*}
\|u\|_{L^{\infty}}\leq C.
\end{equation*}
\end{proposition}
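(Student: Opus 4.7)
The plan is to adapt the Błocki--Székelyhidi ABP argument of \cite[Proposition 11]{Szekelyhidi18}, with the modifications of \cite[Proposition 3.1]{CTW19} needed to pass from the Hermitian to the almost Hermitian setting. The only ingredients used are (i) the $L^{1}$ bound supplied by Proposition \ref{L1} and (ii) the $\mathcal{C}$-subsolution inequality \eqref{2..12}.

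Set $w=u-\underline{u}$, so $\sup_{M}w=0$, and it suffices to prove $L:=-\inf_{M}w$ is bounded by a constant with the stated dependence. Because $\Gamma$ is an open symmetric convex cone containing $\Gamma_{n}$, one has $\Gamma\subset\Gamma_{1}$, hence $\operatorname{tr}_{\chi}\omega_{u}>0$; combined with the $C^{2}$-bound on $\underline{u}$ and $\|\omega\|_{C^{0}}$, this yields $\Delta^{C}w\geq -B$ for a controlled constant $B$. Proposition \ref{L1} then gives
\[
\int_{M}(-w)\,\chi^{n}\leq C_{0}.
\]

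Next, let $x_{0}\in M$ with $w(x_{0})=-L$, fix a coordinate chart on a Euclidean ball $B_{r_{0}}\subset\mathbb{R}^{2n}$ centred at $x_{0}$ (with $r_{0}$ depending only on $(M,\chi,J)$), and, following Błocki, define
\[
v(x):=w(x)+\varepsilon|x|^{2}\quad\text{on }\overline{B_{r_{0}}},
\]
where $\varepsilon>0$ will be chosen small in terms of the $\delta$ from \eqref{2..12}. If $L$ is large compared to $\varepsilon r_{0}^{2}$, the minimum of $v$ lies in $B_{r_{0}/2}$, and the Alexandrov--Bakelman--Pucci maximum principle gives
\[
c_{0}L^{2n}\leq\int_{P}\det(D^{2}v)\,dx,
\]
with $P\subset B_{r_{0}}$ the lower contact set where $v$ coincides with its convex envelope and in particular $D^{2}v\geq 0$, $|Dv|\leq C\varepsilon r_{0}$.

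At any $x\in P$, $D^{2}w(x)\geq -2\varepsilon\operatorname{Id}$ in the Euclidean sense, and, after accounting for the first-order torsion terms $[e_{i},\bar e_{j}]^{(0,1)}$ relating $D^{2}u$ to $\tilde g_{i\bar j}$ (these are controlled because $|Dw|$ is controlled on $P$), one obtains $\omega_{u}(x)\geq\omega_{\underline{u}}(x)-C\varepsilon\chi$. Hence $\mu(u)(x)\in\mu(\underline{u})-2\delta\mathbf{1}+\Gamma_{n}$ for $\varepsilon$ sufficiently small, and $\mu(u)(x)\in\partial\Gamma^{h(x)}$ by the equation. By \eqref{2..12}, $|\mu(u)(x)|\leq R$, so $\det(D^{2}v)(x)\leq C_{1}$. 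Combining,
\[
c_{0}L^{2n}\leq C_{1}|P|.
\]
Since $v\leq v(x_{0})+C\varepsilon r_{0}^{2}$ on $P$, we have $-w\geq L-C\varepsilon r_{0}^{2}$ on $P$, and the $L^{1}$ bound gives $|P|(L-C\varepsilon r_{0}^{2})\leq C_{0}$; hence $L^{2n+1}\leq C$, completing the proof.

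The main obstacle is the transition from the Euclidean complex Hessian $(\partial_{i}\bar\partial_{j}u)$ in the chart to the almost complex quantity $\tilde g_{i\bar j}$: the non-integrability of $J$ produces the Lie-bracket torsion terms in the definition of $\ddbar u$, and one must ensure these are absorbed using the interior Lipschitz estimate for $v$ on $P$. This is precisely the point handled in \cite[Proposition 3.1]{CTW19}, so the only new verification is that the Monge--Amp\`ere-specific step there (bounding $\det\omega_{u}$ from above via the subsolution) is replaced by the general statement \eqref{2..12} guaranteed by the $\mathcal{C}$-subsolution hypothesis; after this substitution the argument goes through verbatim.
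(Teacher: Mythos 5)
Your approach is the same as the paper's (the B\l ocki--Sz\'ekelyhidi ABP argument adapted to the almost Hermitian case via the $J$-invariant decomposition of $D^{2}v$), but the ABP step is misquoted. What \cite[Proposition 10]{Szekelyhidi18} (or the classical ABP estimate) produces is a lower bound for $\int_{P}\det(D^{2}v)$ by the $2n$-th power of the \emph{boundary-to-interior drop} of $v$, not by $L^{2n}$. Here $w\geq -L$ on all of $B_{r_{0}}$, so $v=w+\varepsilon|x|^{2}\geq -L$, $v(x_{0})=-L$, and $v\geq -L+\varepsilon r_{0}^{2}$ on $\partial B_{r_{0}}$; the drop is $\varepsilon r_{0}^{2}$, giving $c_{0}(\varepsilon r_{0}^{2})^{2n}\leq\int_{P}\det(D^{2}v)$. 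As written, $c_{0}L^{2n}\leq\int_{P}\det(D^{2}v)$ is simply false (and the claim that the minimum of $v$ lies in $B_{r_{0}/2}$ ``if $L$ is large compared to $\varepsilon r_{0}^{2}$'' is unneeded -- $v(x_{0})=-L$ is the global minimum regardless). With the corrected inequality the argument still closes: $(\varepsilon r_{0}^{2})^{2n}\leq C_{1}|P|$ together with $|P|(L-C\varepsilon r_{0}^{2})\leq C_{0}$ gives $L\leq C$, with $\varepsilon,r_{0}$ fixed small constants.

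A second, smaller gap: you pass directly from $|\mu(u)(x)|\leq R$ to $\det(D^{2}v)(x)\leq C_{1}$, but the first bound controls $\omega_{u}$, hence only the $J$-invariant part $(D^{2}v)^{J}$ of the real Hessian, and that does not by itself bound $\det(D^{2}v)$. The paper's key observation is that on the contact set $P$ one also has $D^{2}v\geq 0$, so both $D^{2}v$ and $J^{T}\cdot D^{2}v\cdot J$ are PSD, and superadditivity $\det(A+B)\geq\det A+\det B$ for PSD matrices yields $\det((D^{2}v)^{J})=2^{-2n}\det(D^{2}v+J^{T}D^{2}v\,J)\geq 2^{-2n+1}\det(D^{2}v)$, whence $\det(D^{2}v)\leq C$. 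You attribute this to \cite{CTW19} in your closing remark, but it is precisely the almost Hermitian adaptation the statement is about and should be spelled out rather than cited around.
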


\begin{proof}
Replacing $\omega$ by $\omega_{\underline{u}}$, we may assume that $\underline{u}=0$. It then suffices to establish the lower bound of the infimum $I=\inf_{M}u$. Assume that $I$ is attained at $x_{0}$. Choose a local coordinate chart $(x^{1},\ldots,x^{2n})$ in a neighborhood of $x_{0}$ containing the unit ball $B_{1}(0)\subset \mathbb{R}^{2n}$ such that the point $x_{0}$ corresponds the origin $0\in \mathbb{R}^{2n}$.
	
Consider the function
\[
v= u+\ve\sum_{i=1}^{2n}(x^{i})^{2}
\]
for a small $\ve>0$ determined later. It is clear that
\[
v(0) = I, \quad
v\geq I+\ve \ \text{on $\partial B_{1}(0)$}.
\]
We define the set $P$ by
\begin{equation}\label{P}
P=\Big\{x\in B_{1}(0) : |Dv(x)|\leq \frac{\ve}{2},
v(y)\geq v(x)+Dv(x)\cdot (y-x),\ \forall\  y\in B_{1}(0)\Big\}.
\end{equation}
Thanks to \cite[Proposition 10]{Szekelyhidi18}, we have
\begin{equation}\label{3.5}
c_{0}\ve^{2n}\leq \int_{P}\det(D^{2}v),
\end{equation}
where $c_{0}$ is a constant depending only on $n$.

Let $(D^{2}u)^{J}$ be the $J$-invariant part of $D^{2}u$, i.e.
\begin{equation}\label{zero order D2 2}
(D^{2}u)^{J}=\frac{1}{2}(D^{2}u+J^{T}\cdot D^{2}u\cdot J),
\end{equation}
where $J^{T}$ denotes the transpose of $J$. The definition of $P$ \eqref{P} shows $D^{2}v\geq 0$ on $P$. Then
\begin{equation}\label{zero estimate D2}
(D^{2}u)^{J}(x)\geq (D^{2}v)^{J}(x)-C\ve\,\mathrm{Id}\geq -C\ve\,\mathrm{Id} \ \ \text{on $P$}.
\end{equation}
Consider the bilinear form $H(u)(X,Y)=\ddbar u(X, JY)$ on $B_{1}(0)$. Direct calculation shows (see e.g. \cite[p. 443]{TWWY15})
\[
H(u)(X,Y)(x)=\frac{1}{2}(D^{2}u)^{J}(x)+E(u)(x),
\]	
where $E(u)(x)$ is an error matrix which depends linearly on $Du(x)$. On $P$, we have $|Dv|\leq\frac{\ve}{2}$ and so $|Du|\leq \frac{5\ve}{2}$. Combining this with \eqref{zero estimate D2},
\[
H(u) \geq -C\ve\,\mathrm{Id}.
\]
It then follows that
\[
\omega_{u}-\omega = \ddbar u \geq -C\ve\chi.
\]
Choosing $\ve$ sufficiently small such that $C\ve\leq \delta$, we see that
\[
\mu(u)\in \mu(0)-\delta \textbf{1}+\Gamma_{n}.
\]
Since $u$ solves the equation \eqref{nonlinear equation}, then $\mu(u)\in \partial\Gamma^{h}$. By \eqref{2..12},
\[
\mu(u)\in (\mu(0)-\delta \textbf{1}+\Gamma_{n})\cap \partial\Gamma^{h} \subset B_{R}(0)
\]
for some $R>0$. This gives upper bound for $H(u)$. We define $H(v)$ similarly and obtain
\begin{equation}\label{upper bound of D 2 v J}
(D^{2}v)^{J} = 2H(v)-2E(v) \leq 2H(u)+C\ve\,\mathrm{Id} \leq C\,\mathrm{Id}.
\end{equation}
Note that $\det(A+B)\geq \det(A)+\det(B)$ for non-negative definite Hermitian matrices $A,B$. On $P$, we have $D^{2}v\geq 0$ and so
\[
\det((D^{2}v)^{J}) = 2^{-2n}\det(D^{2}v+J^{T}\cdot D^{2}v\cdot J)
\geq 2^{-2n+1}\det(D^{2}v).
\]
Combining this with \eqref{upper bound of D 2 v J},
\[
\det(D^{2}v) \leq 2^{2n-1}\det((D^{2}v)^{J})\leq C.
\]
Plugging this into (\ref{3.5}), we obtain
\begin{equation}\label{zero estimates 3}
c_{0}\ve^{2n}\leq C|P|.
\end{equation}
For each $x\in P$, choosing $y=0$ in $\eqref{P}$, we have
\begin{equation*}
I=v(0)\geq v(x)-|Dv(x)|\cdot|x|\geq v(x)-\frac{\ve}{2}.
\end{equation*}
We assume without loss of generality that $I+\ve\leq 0$. It then follows that
\[
|I+\ve| \leq -v \ \ \text{on $P$}.
\]
Integrating both sides and using \eqref{zero estimates 3}, we obtain
\[
c_{0}\ve^{2n} \leq |P| \leq \frac{\int_{P}(-v)\chi^{n}}{|I+\ve|}.
\]
On the other hand, the assumptions of $\Gamma$ shows (see \cite[(4)]{CNS85} or \cite[(44)]{Szekelyhidi18})
\[
\Gamma \subset \Gamma_{1} := \big\{(w_{1},\ldots,w_{n}):\sum_{i}w_{i}>0\big\}.
\]
Then we have $\mathrm{tr}_{\chi}\omega_{u}>0$ and so
\begin{equation}\label{Delta u lower bound}
\Delta^{C}u = \mathrm{tr}_{\chi}\omega_{u}-\mathrm{tr}_{\chi}\omega \geq -C.
\end{equation}
Using Proposition \ref{L1},
\[
c_{0}\ve^{2n}|I+\ve| \leq \int_{P}(-v)\chi^{n} \leq \int_{P}(-u)\chi^{n}+C\ve \leq C.
\]
This gives the required estimate of $I$.
\end{proof}

\section{Second order estimate}\label{second order estimate}
In this section, we prove the following second order estimate.
\begin{theorem}\label{Thm4.1}
Suppose that $\underline{u}$ is a $\mathcal{C}$-subsolution of \eqref{nonlinear equation}. Let $u$ be a smooth solution of \eqref{nonlinear equation} with $\sup_{M}(u-\underline{u})=0$. Then there exists a constant $C$ depending only on $\|\underline{u}\|_{C^{4}}$, $\|h\|_{C^{2}}$, $\|\omega\|_{C^{2}}$, $f$, $\Gamma$ and $(M,\chi,J)$ such that
\begin{equation}\label{aa}
\sup_{M}|\nabla^{2}u|_{\chi} \leq C\sup_{M}|\partial u|_{\chi}^{2}+C,
\end{equation}
where $\nabla$ denotes the Levi-Civita connection with respect to $\chi$.
\end{theorem}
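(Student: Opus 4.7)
The plan is to apply the maximum principle to a test quantity of the form
\[
Q = \log\lambda_{1}(\nabla^{2}u) + \varphi(|\partial u|_{\chi}^{2}) + \xi(|\rho|^{2}) + N(u-\underline{u}),
\]
where $\lambda_{1}$ is the largest eigenvalue of the real Hessian $\nabla^{2}u$, $\rho$ is an auxiliary $1$-tensor (a multiple of $\partial u$, suitably normalized), and $\varphi$, $\xi$ are standard convex one-variable functions; the constant $N$ is chosen large at the end. Following the approach of \cite{CTW19} and \cite{CM21}, after perturbing $\lambda_{1}$ to a smooth function near the maximum point $x_{0}$, computing $L(Q)\geq 0$ reduces the proof to controlling a bad third order term $B$ by positive third order terms $G_{1}, G_{2}, G_{3}$ coming respectively from differentiating $\log\lambda_{1}$, from the concavity of $f$, and from the $\xi(|\rho|^{2})$ factor.

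First I would set up the computation at $x_{0}$ in a unitary frame diagonalizing $\tilde{g}_{i\bar{j}}$, record the commutator, torsion and curvature terms that arise from $J$ not being integrable, and identify all terms involving $(\nabla^{2}u)$, $(\nabla^{3}u)$ together with those that depend only on lower-order data. The terms arising from the non-integrability of $J$ are controlled by the method of \cite[Proposition 5.1]{CTW19}: by absorbing them into either $\xi'(|\rho|^{2})|\nabla\rho|^{2}$ or into the large positive $N\mathcal{F}$ contribution using Proposition \ref{prop subsolution}(1). This leaves the essentially complex bad term $B$, which is purely third-order in $u$.

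The heart of the proof is the case analysis based on two index sets attached to $x_{0}$. Following the paper's outline I introduce, in addition to the usual ``small'' set $S=\{i:F^{i\bar i}\geq\theta^{-1}F^{1\bar 1}\}$ and a modified ``large Hessian'' set $I$, a new set $J$ designed so that $B$ splits naturally as $B=B_{1}+B_{2}+B_{3}$ in \eqref{definition Bi}. The three cases are $J=\emptyset$; $J\neq\emptyset$ and $S=\emptyset$; and $J,S\neq\emptyset$. In the first two cases, either all $F^{i\bar i}$ are comparable, or the large eigenvalue gap allows direct absorption of $B$ into $G_{1}$ via the second-derivative formula \eqref{second derive of F}, exactly as in \cite[Sections 4.2.1--4.2.2]{CM21}, and Proposition \ref{prop subsolution}(2) is used to produce a definite positive lower order term from $\sum F^{p\bar q}(B_{p\bar q}-A_{p\bar q})$ or to obtain a uniform lower bound $F^{i\bar i}\geq\theta\mathcal{F}$.

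The main obstacle is the third case $J,S\neq\emptyset$, where neither of the above mechanisms applies and where $B_{1}, B_{2}$ would, under the argument of \cite{CM21}, contribute terms of order $|\partial u|_{\chi}^{2}\lambda_{1}$ that we cannot afford since the constant in \eqref{aa} must be independent of $\sup_{M}|\partial u|_{\chi}$. The refined definitions of $I$ and $J$ are chosen precisely so that $B_{1}$ and $B_{2}$ degrade only into acceptable lower order quantities (this is Lemma \ref{bad terms 1 2}). For the principal piece $B_{3}$, I would first prove a quantitative lower bound $\omega_{u}\geq -C(1+\sup_{M}|\partial u|_{\chi}^{2})\chi$ (Lemma \ref{nu}) by combining the $\mathcal{C}$-subsolution condition with the bound already obtained on $\lambda_{1}$ on the set where $Q$ is controlled; this is where the quadratic dependence on the gradient enters. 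Inserting this lower bound into the Cauchy--Schwarz-type estimate for $B_{3}$ and using \eqref{second derive of F} carefully, $B_{3}$ is absorbed by $G_{1}+G_{2}+G_{3}$ (Lemma \ref{B 3}). Finally, choosing $N$ so large that $N\mathcal{F}$ dominates every remaining lower order term (using Proposition \ref{prop subsolution}(1) to bound $\mathcal{F}$ from below), the maximum principle inequality $L(Q)(x_{0})\geq 0$ forces $\lambda_{1}(x_{0})\leq C(1+\sup_{M}|\partial u|_{\chi}^{2})$, from which \eqref{aa} follows since $|\nabla^{2}u|_{\chi}$ is controlled by $\lambda_{1}$ together with $\Delta^{C}u$, and $\Delta^{C}u$ is bounded by $\sum\mu_{i}\leq C\lambda_{1}$ up to terms in $|\partial u|_{\chi}^{2}$.
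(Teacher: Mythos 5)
Your sketch recovers the overall architecture of the paper's argument accurately: the test quantity $Q$ built from $\log\lambda_1$, two convex one-variable functions, and a subsolution-type term; the perturbation of $\lambda_1$; the identification of $G_1, G_2, G_3$ as the three positive reservoirs (from differentiating $\log\lambda_1$, from concavity of $f$, and from the $\xi(|\rho|^2)$ factor); the decomposition $B = B_1 + B_2 + B_3$ indexed by $I$; the three-case analysis governed by index sets $J$ and $S$; the key lower bound $\omega_u \geq -C_A K\chi$ of Lemma \ref{nu}; and the final use of the $\mathcal{C}$-subsolution dichotomy from Proposition \ref{prop subsolution}(2). This is the same route as the paper's.

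However, there is one substantive gap in your description: you take $\rho$ to be ``an auxiliary $1$-tensor (a multiple of $\partial u$, suitably normalized).'' In the paper, $\rho = \nabla^2 u + N\chi$ is a symmetric $2$-tensor --- the shifted real Hessian --- and this is essential, not cosmetic. The role of $\xi(|\rho|^2)$ in $Q$ is to produce, via Lemma \ref{lower bound of L rho}, the genuinely third-order positive contribution
\[
G_3 = \sum_{\alpha,\beta}\frac{F^{i\ov i}|e_i(u_{\alpha\beta})|^2}{C_A\lambda_1^2},
\]
involving first derivatives of the full real Hessian $u_{\alpha\beta}$. This term is what absorbs the piece $B_{33}$ in the estimate of $B_3$ (Lemma \ref{B 3}), after invoking Lemma \ref{nu}(2) to bound the coefficients $\nu_q$ for $q\in I$. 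If $\rho$ were proportional to $\partial u$, then $\xi(|\rho|^2)$ would only reproduce the $\sum_j F^{i\ov i}(|e_i e_j u|^2 + |e_i\ov e_j u|^2)$-type terms already supplied by $\eta(|\partial u|^2)$, and $G_3$ would simply not appear; the estimate of $B_3$ would then break down, since $B_{33}$ has no other place to go. So the $2$-tensor $\rho = \nabla^2 u + N\chi$ is a load-bearing choice, and you need the normalizations $N = \sup_M|\nabla^2 u|+1$, $\xi' \approx 1/N^2$ together with the pigeonhole estimate $N\leq C_A\lambda_1(x_0)$ to turn $\xi' \cdot |e_i(u_{\alpha\beta})|^2$ into the $1/\lambda_1^2$-weighted $G_3$.

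Two smaller mismatches worth flagging, though less critical in a sketch: your $S$ is described as the set where $F^{i\ov i}$ is comparable to $F^{1\ov 1}$, whereas the paper's $S = \{j_0\leq i\leq n-1: F^{i\ov i}\leq A^{-2}e^{2Au}F^{i+1\,\overline{i+1}}\}$ records gaps in the sequence $F^{1\ov 1}\leq\cdots\leq F^{n\ov n}$, and its minimal element $i_0$ is what defines $I = \{i_0+1,\dots,n\}$. And the lower bound $\omega_u \geq -C_A K\chi$ in Lemma \ref{nu}(1) is not obtained from the $\mathcal{C}$-subsolution condition: it follows because $n\in I$ forces $n\notin J$, which by the definition of $J$ (and the choice $\eta'\approx 1/K$) bounds $e_n\ov e_n u$ from below by $-C_A K$, and $\tilde g_{n\ov n}$ is the smallest eigenvalue of $\omega_u$. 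The $\mathcal{C}$-subsolution condition enters only through Proposition \ref{prop subsolution} at the very end.
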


Replacing $\omega$ by $\omega_{\underline{u}}$ and $u$ by $u-1$, we may assume $\underline{u}=0$ and $\sup_{M}u=-1$. Let $\lambda_{1}\geq\lambda_{2}\geq\cdots\geq\lambda_{2n}$ be the eigenvalues of $\nabla^{2}u$ with respect to $\chi$. For notational convenience, we write
\[
|\cdot| = |\cdot|_{\chi}.
\]
Combining $\Delta^{C}u\geq-C$ (see \eqref{Delta u lower bound}) with \cite[(5.2)]{CTW19},
\[
\sum_{\alpha=1}^{2n}\lambda_{\alpha} \geq 2\Delta^{C}u-C\sup_{M}|\de u|
\geq -C\sup_{M}|\de u|-C,
\]
which implies
\begin{equation}\label{Hessian lambda 1}
|\nabla^{2}u| \leq C\lambda_{1}+C\sup_{M}|\de u|+C.
\end{equation}

On the other hand, setting
\begin{equation*}
K=\sup_{M}|\partial u|^{2}+1, \quad  N=\sup_{M} |\nabla^{2} u|+1, \quad \rho=\nabla^{2}u+N\chi.
\end{equation*}
Let us remark that $\rho>0$ by definition. We consider the quantity
\begin{equation*}
Q = \log \lambda_{1}+\xi(|\rho|^{2})+\eta(|\partial u|^{2})+e^{-Au}
\end{equation*}
on $\Omega:=\{\lambda_{1}>0\}\subset M$, where
\[
\xi(s) = -\frac{1}{4}\log(5N^{2}-s), \quad
\eta(s) = -\frac{1}{4}\log(2K-s)
\]
and $A>0$ is a large constant to be determined later. It is clear that
\begin{equation}\label{xieta}
\begin{split}
\xi''=4(\xi')^{2},& \quad \frac{1}{20N^{2}} \leq \xi' \leq \frac{1}{4N^{2}},\\[1mm]
\eta''=4(\eta')^{2}, & \quad  \frac{1}{8K} \leq \eta' \leq \frac{1}{4K}.
\end{split}
\end{equation}
We may assume $\Omega$ is a nonempty open set (otherwise we are done by \eqref{Hessian lambda 1}). Let $x_{0}$ be a maximum point of $Q$. Note that $Q(z)\ri -\infty$ as $z$ approaches to $\partial{\Omega}$. Then $x_{0}$ must be an interior point of $\Omega$. To prove Theorem \ref{Thm4.1}, it suffices to show
\begin{equation}\label{goal}
\lambda_{1}(x_{0}) \leq CK.
\end{equation}
Indeed, by the definition of $Q$ and Proposition \ref{Prop3.2},
\begin{equation}\label{Q x0}
Q(x_{0}) \leq \log\lambda_{1}(x_{0})-\frac{1}{4}\log N^{2}-\frac{1}{4}\log K+e^{CA}.
\end{equation}
Let $y_{0}$ be the maximum point of $|\nabla^{2}u|$. Then \eqref{Hessian lambda 1} implies
\[
\sup_{M} |\nabla^{2} u|+1 = N \leq C\lambda_{1}(y_{0})+C\sup_{M}|\de u|+C
\leq C\lambda_{1}(y_{0})+C\sqrt{K}.
\]
Without loss of generality, we assume that $N\geq 2C\sqrt{K}$. Then $\lambda_{1}(y_{0})\geq\frac{N}{2C}>0$ and so
\begin{equation}\label{Q y0}
\begin{split}
Q(y_{0}) \geq {} & \log\lambda_{1}(y_{0})-\frac{1}{4}\log(5N^{2})-\frac{1}{4}\log(2K) \\
\geq {} & \log\frac{N}{2C}-\frac{1}{2}\log N-\frac{1}{4}\log K-C \\
\geq {} & \frac{1}{2}\log N-\frac{1}{4}\log K-C.
\end{split}
\end{equation}
Recalling that $x_{0}$ is the maximum point of $Q$, we have $Q(y_{0})\leq Q(x_{0})$. Combining this with \eqref{Q x0} and \eqref{Q y0},
\begin{equation}\label{N lambda 1}
N \leq C_{A}\lambda_{1}(x_{0})
\end{equation}
for some constant $C_{A}$ depending on $A$. This shows \eqref{goal} implies Theorem \ref{Thm4.1}.

Near $x_{0}$, we choose a local unitary frame $\{e_{i}\}_{i=1}^{n}$ with respect to $\chi$ such that at $x_{0}$,
\begin{equation}\label{5..6}
\chi_{i\overline{j}}=\delta_{ij}, \quad \tilde{g}_{i\overline{j}}=\delta_{ij}\tilde{g}_{i\overline{i}}, \quad \tilde{g}_{1\overline{1}}\geq\tilde{g}_{2\overline{2}}\geq\cdots\geq\tilde{g}_{n\overline{n}}.
\end{equation}
Note that $\ti{g}_{i\ov{j}}$ is defined by $\omega_{u}=\sqrt{-1}\ti{g}_{i\ov{j}}\theta^{i}\wedge\ov{\theta}^{j}$, where $\{\theta^{i}\}_{i=1}^{n}$ denotes the dual coframe of $\{e_{i}\}_{i=1}^{n}$. By \eqref{F ii 1}, at $x_{0}$, we have
\begin{equation}\label{F ii}
F^{1\ov{1}} \leq F^{2\ov{2}} \leq \cdots \leq F^{n\ov{n}}.
\end{equation}
In addition, since $(M,\chi,J)$ is almost Hermitian, then $\chi$ and $J$ are compatible, and so there exists a coordinate system $(U,\{x^{\alpha}\}_{\alpha=1}^{2n})$ in a neighborhood of $x_{0}$ such that at $x_{0}$,
\begin{equation}\label{real frame and complex frame}
e_{i}=\frac{1}{\sqrt{2}}\left(\frac{\de}{\de x^{2i-1}}-\sqrt{-1}\frac{\de}{\de x^{2i}}\right), \quad \text{for $i=1,2,\cdots,n$},
\end{equation}
and
\begin{equation}\label{first derivative of background metric}
\frac{\partial \chi_{\alpha\beta}}{\partial x^{\gamma}}=0, \quad \text{for $\alpha,\beta,\gamma=1,2,\cdots,2n$},
\end{equation}
where $\chi_{\alpha\beta}=\chi(\partial_{\alpha}, \partial_{\beta})$ and $\de_{\alpha}=\frac{\de}{\de x^{\alpha}}$. Write $u_{\alpha\beta}=(\nabla^{2}u)(\de_{\alpha},\de_{\beta})$ and define
\[
\Phi_{\beta}^{\alpha} := \chi^{\alpha\gamma}u_{\gamma\beta},
\]
where $\chi^{\alpha\gamma}$ denotes the inverse of the matrix $(\chi_{\alpha\gamma})$. Recall that $\lambda_{\alpha}$ denote the eigenvalues of $\nabla^{2}u$ with respect to $\chi$. Then $\lambda_{\alpha}$ are also the eigenvalues of $\Phi$. Let $V_{1},\ldots,V_{2n}$ be the eigenvectors for $\Phi$ at $x_{0}$, corresponding to eigenvalues $\lambda_{1},\ldots,\lambda_{2n}$ respectively. Define $V_{\alpha}^{\beta}$ by $V_{\alpha}=V_{\alpha}^{\beta}\de_{\beta}$ at $x_{0}$, and extend $V_{\alpha}$ to be vector fields near $x_{0}$ by taking the components to be constants. Using the perturbation argument (see \cite[p. 1965]{CTW19}), we may assume that $\lambda_{1}>\lambda_{2}$ at $x_{0}$, and so $\lambda_{1}$ is smooth near $x_{0}$.

At $x_{0}$, the maximum principle shows
\begin{equation}\label{5..10}
\frac{(\lambda_{1})_{i}}{\lambda_{1}}=-\xi'e_{i}(|\rho|^{2})-\eta'e_{i}(|\partial u|^{2})+Ae^{-Au}u_{i}, \quad
\text{for $i=1,2,\cdots,n$},
\end{equation}
and
\begin{equation}\label{L Q}
\begin{split}
0 & \geq  L(Q) \\[2mm]
&= \frac{L(\lambda_{1})}{\lambda_{1}}-\frac{F^{i\bar{i}}|(\lambda_{1})_{i}|^{2}}{\lambda_{1}^{2}}
+\xi' L(|\rho|^{2})+\xi''F^{i\bar{i}}|e_{i}(|\rho|^{2})|^{2}
 \\[1mm]
&+\eta'L(|\partial u|^{2})+\eta'' F^{i\bar{i}}|e_{i}(|\partial u|^{2})|^{2} -Ae^{-Au}L(u)+A^{2}e^{-Au} F^{i\bar{i}}|u_{i}|^{2},
\end{split}
\end{equation}
where the operator $L$ is defined in \eqref{L}.

From now on, all the calculations are done at $x_{0}$. We will use the Einstein summation convention. We usually use $C$ to denote a constant depending only on $\|h\|_{C^{2}}$, $\|\omega\|_{C^{2}}$, $f$, $\Gamma$, $(M,\chi,J)$, and $C_{A}$ to denote a constant depending only on $A$, $\|h\|_{C^{2}}$, $\|\omega\|_{C^{2}}$, $f$, $\Gamma$, $(M,\chi,J)$. In the following argument, we always assume without loss of generality that $\lambda_{1}\geq CK$ for some $C$, or $\lambda_{1}\geq C_{A}K$ for some $C_{A}$.

\subsection{Lower bound for $L(Q)$}
\begin{proposition}\label{lower bound of L Q}
For $\ve\in (0,\frac{1}{3}]$, at $x_{0}$, we have
\begin{equation}\label{4.7"}
\begin{split}
0  \geq L(Q) & \geq
G_{1}+G_{2}+G_{3}-B +\xi''F^{i\bar{i}}|e_{i}(|\rho|^{2})|^{2} \\[3mm]
& + \frac{3\eta'}{4}\sum_{i,j} F^{i\bar{i}}(|e_{i}e_{j}u|^{2}+|e_{i}\bar{e}_{j}u|^{2})
+\eta'' F^{i\bar{i}}|e_{i}(|\partial u|^{2})|^{2}\\&
-Ae^{-Au}L(u)+A^{2}e^{-Au} F^{i\bar{i}}|u_{i}|^{2}-\frac{C}{\ve}\mathcal{F},
\end{split}
\end{equation}
where
\begin{equation}\label{G1 G2}
G_1 := (2-\ve)\sum_{\alpha>1}\frac{F^{i\ov{i}}|e_{i}(u_{V_{1}V_{\alpha}})|^2}{\lambda_{1}(\lambda_{1}-\lambda_{\alpha})}, \quad
G_2:= -\frac{1}{\lambda_{1}} F^{i\bar{k},j\bar{l}}V_{1}(\tilde{g}_{i\bar{k}})V_{1}(\tilde{g}_{j\bar{l}}),
\end{equation}
\begin{equation}\label{G3 B F}
G_3 := \sum_{\alpha,\beta}\frac{F^{i\ov{i}}|e_{i}(u_{\alpha\beta})|^{2}}
{C_{A}\lambda_{1}^{2}} ,\quad
B := (1+\ve)\frac{F^{i\bar{i}}|(\lambda_{1})_{i}|^{2}}{\lambda_{1}^{2}}, \quad
\mathcal{F} := \sum_{i}F^{i\ov{i}}.
\end{equation}
\end{proposition}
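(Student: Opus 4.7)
The plan is to substitute the maximum-principle identity $L(Q)\leq 0$ (displayed just above the proposition) into the argument and bound each of its four nontrivial pieces $L(\lambda_{1})/\lambda_{1}$, $-F^{i\bar i}|(\lambda_{1})_{i}|^{2}/\lambda_{1}^{2}$, $\xi'L(|\rho|^{2})$ and $\eta'L(|\partial u|^{2})$ from below. The positive contributions will be collected into $G_{1},G_{2},G_{3}$ and $\tfrac{3\eta'}{4}\sum F^{i\bar i}(|e_{i}e_{j}u|^{2}+|e_{i}\bar e_{j}u|^{2})$; every remaining error term will be dominated by $C\mathcal F$, by $(C/\varepsilon)\mathcal F$, or by a small multiple of a good term; and the pieces $-Ae^{-Au}L(u)+A^{2}e^{-Au}F^{i\bar i}|u_{i}|^{2}$ are carried over verbatim.

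For $L(\lambda_{1})/\lambda_{1}$, the perturbation argument makes $\lambda_{1}$ a simple eigenvalue of $\Phi$ at $x_{0}$, so the standard second-derivative formula for a simple Hermitian eigenvalue gives
\[
e_{i}\bar e_{i}(\lambda_{1}) \;=\; V_{1}^{\alpha}V_{1}^{\beta}e_{i}\bar e_{i}(u_{\alpha\beta})\;+\;2\sum_{\alpha>1}\frac{|e_{i}(u_{V_{1}V_{\alpha}})|^{2}}{\lambda_{1}-\lambda_{\alpha}} \;+\; (\text{commutators}).
\]
Multiplying by $F^{i\bar i}/\lambda_{1}$, differentiating $F(\omega_{u})=h$ twice along $V_{1}$, and invoking the concavity identity \eqref{second derive of F} for $F^{i\bar j,k\bar l}$ produces $G_{2}$ together with the Garding-type sum $G_{1}$ with initial coefficient $2$. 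The bracket $F^{i\bar i}[e_{i},\bar e_{i}]^{(0,1)}(\lambda_{1})/\lambda_{1}$ and the commutators arising from the Nijenhuis tensor of $J$ and from covariant derivatives of $\chi,\omega$ are controlled, in the spirit of \cite[Proposition 5.1]{CTW19}, by $O(\mathcal F)$ estimates plus Cauchy--Schwarz contributions of the form $\varepsilon F^{i\bar i}|(\lambda_{1})_{i}|^{2}/\lambda_{1}^{2}+(C/\varepsilon)\mathcal F$, using \eqref{5..10} to identify $(\lambda_{1})_{i}/\lambda_{1}$ with already-controlled first-order quantities. Combining with the $-F^{i\bar i}|(\lambda_{1})_{i}|^{2}/\lambda_{1}^{2}$ in $L(Q)$ reduces the Garding coefficient from $2$ to $2-\varepsilon$ and leaves precisely $-B=-(1+\varepsilon)F^{i\bar i}|(\lambda_{1})_{i}|^{2}/\lambda_{1}^{2}$.

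For $\xi'L(|\rho|^{2})$, I would expand $|\rho|^{2}=\chi^{\alpha\gamma}\chi^{\beta\delta}\rho_{\alpha\beta}\rho_{\gamma\delta}$ with $\rho=\nabla^{2}u+N\chi$ and apply $L$; the crucial positive contribution is the third-order square $2\xi'F^{i\bar i}|e_{i}(u_{\alpha\beta})|^{2}$, which by $\xi'\geq 1/(20N^{2})\geq 1/(C_{A}\lambda_{1}^{2})$ (from \eqref{xieta} and \eqref{N lambda 1}) is at least $G_{3}$, while \eqref{first derivative of background metric} kills the first derivatives of $\chi$ at $x_{0}$ and the remaining at-most-second-order pieces are absorbed into $C\mathcal F$ thanks to $\xi'\leq 1/(4N^{2})$. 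For $\eta'L(|\partial u|^{2})$, expanding $|\partial u|^{2}=\sum_{j}|e_{j}u|^{2}$ produces the non-negative quantity $\eta'\sum F^{i\bar i}(|e_{i}e_{j}u|^{2}+|e_{i}\bar e_{j}u|^{2})$ together with a cross term of the form $2\eta'\,\mathrm{Re}\big(e_{j}u\cdot L(\bar e_{j}u)\big)$; differentiating the equation once to rewrite $L(\bar e_{j}u)$ in terms of controlled quantities and a Cauchy--Schwarz with parameter $1/4$ absorb a quarter of the positive piece, leaving the announced factor $3\eta'/4$ and an error that joins $C\mathcal F$.

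The main obstacle throughout is the bookkeeping of almost-Hermitian corrections: every bracket $[e_{i},\bar e_{j}]^{(0,1)}$, every occurrence of the Nijenhuis tensor, and every derivative of the local frame $\{e_{i}\}$ or of $\chi,\omega$ must be shown to fit one of three templates---$O(\mathcal F)$, $O(\lambda_{1}^{-1}\mathcal F)$, or $\varepsilon(\text{good term})+(C/\varepsilon)\mathcal F$. Keeping $\varepsilon\in(0,1/3]$ free (rather than fixing it here) is essential: it will be chosen small in the proof of Theorem \ref{Thm4.1} to balance $G_{1}$ against $B$ in the three cases ($J=S=\emptyset$, exactly one of $J,S$ nonempty, and $J,S\neq\emptyset$) sketched in the introduction.
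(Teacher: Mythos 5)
Your overall plan mirrors the paper's proof exactly: the paper establishes Proposition~\ref{lower bound of L Q} by separately bounding $L(\lambda_{1})$ (Lemma~\ref{lower bound of L lambda1}), $L(|\rho|^{2})$ (Lemma~\ref{lower bound of L rho}) and $L(|\partial u|^{2})$ (Lemma~\ref{lower bound of L de u}), using precisely the mechanisms you describe --- the simple-eigenvalue second-derivative formula plus two differentiations of the equation along $V_{1}$ to produce $G_{1}$ and $G_{2}$, the bound $\xi'\geq 1/(20N^{2})\geq 1/(C_{A}\lambda_{1}^{2})$ via \eqref{xieta} and \eqref{N lambda 1} to produce $G_{3}$, and a once-differentiated equation with a $1/4$-absorbing Cauchy--Schwarz for the $3\eta'/4$ factor. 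So the route is the same.

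There is, however, one spot where your sketch is mislabelled and would fail as written. In $\xi'L(|\rho|^{2})$ the cross term $2\xi'\rho_{\alpha\beta}F^{i\bar i}\bigl(e_{i}\bar e_{i}-[e_{i},\bar e_{i}]^{(0,1)}\bigr)(u_{\alpha\beta})$ is not ``at-most-second-order'': after substituting the twice-differentiated equation $F^{i\bar i}\de_{\alpha}\de_{\beta}(\tilde g_{i\bar i})=-F^{i\bar k,j\bar l}\de_{\alpha}(\tilde g_{i\bar k})\de_{\beta}(\tilde g_{j\bar l})+\de_{\alpha}\de_{\beta}(h)$, it leaves the genuinely third-order term $-2\xi'\rho_{\alpha\beta}F^{i\bar k,j\bar l}\de_{\alpha}(\tilde g_{i\bar k})\de_{\beta}(\tilde g_{j\bar l})$, which cannot be absorbed into $C\mathcal{F}$. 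The paper discards it by noting that it is nonnegative, because $(F^{i\bar k,j\bar l})$ is a negative semi-definite quadratic form (concavity of $f$, via \eqref{second derive of F}) and $\rho=\nabla^{2}u+N\chi>0$ by construction. This is the same concavity mechanism you invoke to produce $G_{2}$ from the $V_{1}V_{1}$-differentiation, but it must be invoked again here --- with the crucial extra input $\rho>0$ --- or the $G_{3}$ term is not obtained. A smaller, inconsequential imprecision: the $2\to 2-\ve$ loss in the G{\aa}rding coefficient of $G_{1}$ and the extra $\ve F^{i\bar i}|(\lambda_{1})_{i}|^{2}/\lambda_{1}^{2}$ that builds $B$ both come from the Cauchy--Schwarz used on the commutator term $T$ in Lemma~\ref{lower bound of L lambda1}; the substitution \eqref{5..10} is not needed there but only later in the case analysis of Theorem~\ref{Thm4.1}.
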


To prove Proposition \ref{lower bound of L Q}, we need to estimate the lower bounds of $L(\lambda_{1})$, $L(|\rho|^{2})$ and $L(|\de u|^{2})$ respectively.

\subsubsection{Lower bound of $L(\lambda_{1})$}
\begin{lemma}\label{lower bound of L lambda1}
For $\ve\in(0,\frac{1}{3}]$, at $x_{0}$, we have
\[
L(\lambda_{1}) \geq
(2-\ve)\sum_{\alpha>1}\frac{F^{i\bar{i}}|e_{i}(u_{V_{1}V_{\alpha}})|^{2}}{\lambda_{1}-\lambda_{\alpha}}
-F^{i\bar{k},j\bar{l}}V_{1}(\tilde{g}_{i\bar{k}})V_{1}(\tilde{g}_{j\bar{l}})
-\ve\frac{F^{i\bar{i}}|(\lambda_{1})_{i}|^{2}}{\lambda_{1}}
-\frac{C}{\ve}\lambda_{1}\mathcal{F}.
\]
\end{lemma}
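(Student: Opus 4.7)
The plan is to differentiate $\lambda_{1}$ twice, relate the resulting expression to the second derivatives of the complex Hessian $\tilde{g}_{i\bar{j}}$, and then invoke the equation $F(\omega_{u}) = h$. Since $\lambda_{1} > \lambda_{2}$ at $x_{0}$, the function $\lambda_{1}$ is smooth near $x_{0}$ and the standard eigenvalue perturbation formula applies. At $x_{0}$,
\[
(\lambda_{1})_{i} = e_{i}(u_{V_{1}V_{1}}), \qquad
(\lambda_{1})_{i\bar{i}} \geq e_{i}\bar{e}_{i}(u_{V_{1}V_{1}}) + 2\sum_{\alpha>1}\frac{|e_{i}(u_{V_{1}V_{\alpha}})|^{2}}{\lambda_{1}-\lambda_{\alpha}}.
\]
Multiplying by $F^{i\bar{i}}$ and subtracting the commutator term $F^{i\bar{i}}[e_{i},\bar{e}_{i}]^{(0,1)}(\lambda_{1})$ according to \eqref{L}, one obtains
\[
L(\lambda_{1}) \geq L(u_{V_{1}V_{1}}) + 2\sum_{\alpha>1}\frac{F^{i\bar{i}}|e_{i}(u_{V_{1}V_{\alpha}})|^{2}}{\lambda_{1}-\lambda_{\alpha}}.
\]

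Next I would translate $u_{V_{1}V_{1}}$ into complex-Hessian language. Decomposing the real vector $V_{1}$ as $V_{1} = \tfrac{1}{\sqrt{2}}(W_{1}+\bar{W}_{1})$ for some $(1,0)$-vector $W_{1}$, and using
\[
\tilde{g}_{i\bar{j}} = g_{i\bar{j}} + e_{i}\bar{e}_{j}(u) - [e_{i},\bar{e}_{j}]^{(0,1)}(u),
\]
we have $u_{V_{1}V_{1}} = \tilde{g}(W_{1},\bar{W}_{1}) + \tfrac{1}{2}(W_{1}W_{1}u + \bar{W}_{1}\bar{W}_{1}u) + O(|\partial u| + 1)$. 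Applying $L$, the mixed term matches $V_{1}\bar{V}_{1}(F^{i\bar{i}}\tilde{g}_{i\bar{i}})$ up to commutators, while the $(2,0) + (0,2)$ pieces contribute only $O(\lambda_{1}\mathcal{F})$ after differentiation, since on an almost Hermitian manifold the torsion of $\chi$ and the Nijenhuis tensor of $J$ control the failure of $e_{i},\bar{e}_{i}$ to commute with $W_{1},\bar{W}_{1}$. Collecting these pieces yields, schematically,
\[
L(u_{V_{1}V_{1}}) = F^{i\bar{i}}V_{1}V_{1}(\tilde{g}_{i\bar{i}}) + (\text{error bounded by } C\lambda_{1}\mathcal{F}).
\]

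The next step is to differentiate the equation twice in direction $V_{1}$. The concavity identity
\[
F^{i\bar{i}}V_{1}V_{1}(\tilde{g}_{i\bar{i}}) = V_{1}V_{1}(h) - F^{i\bar{k},j\bar{l}}V_{1}(\tilde{g}_{i\bar{k}})V_{1}(\tilde{g}_{j\bar{l}})
\]
gives exactly the $G_{2}$-type contribution, since $F^{i\bar{k},j\bar{l}}V_{1}(\tilde{g}_{i\bar{k}})V_{1}(\tilde{g}_{j\bar{l}}) \leq 0$ by the concavity of $f$. The bound $|V_{1}V_{1}(h)| \leq C$ is absorbed harmlessly. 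Combining everything, the only remaining issue is that the diagonal term $\alpha=1$ in the eigenvalue-perturbation sum and some $e_{i}(u_{V_{1}V_{1}})$ cross-terms do not directly fit the summation $\sum_{\alpha>1}$. These are handled by Cauchy-Schwarz with parameter $\ve$: writing any commutator cross-term as $2\mathrm{Re}(ab)$ and estimating
\[
2\mathrm{Re}(ab) \leq \ve\frac{|a|^{2}}{\lambda_{1}} + \frac{|b|^{2}}{\ve}\lambda_{1},
\]
we convert one part into $\ve F^{i\bar{i}}|(\lambda_{1})_{i}|^{2}/\lambda_{1}$ and lose the coefficient $2$ on the good term down to $2-\ve$, while the residue becomes $\tfrac{C}{\ve}\lambda_{1}\mathcal{F}$.

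The main obstacle is bookkeeping in the almost Hermitian setting: every commutator $[e_{i},\bar{e}_{i}]$, $[V_{1},e_{i}]$, and $[V_{1},\bar{V}_{1}]$ produces lower-order differential operators that must be controlled by either $|\partial u|$ (which is $\leq \sqrt{K} \leq \sqrt{\lambda_{1}}$ after we enforce $\lambda_{1} \geq CK$) or by $\lambda_{1}$ itself, and the constants must remain independent of $\sup_{M}|\partial u|$. Following the strategy of \cite[Proposition 5.1]{CTW19}, every such error term is dominated by $C\lambda_{1}\mathcal{F}$ or by a multiple of $\ve F^{i\bar{i}}|(\lambda_{1})_{i}|^{2}/\lambda_{1}$, which yields the claimed inequality after summing.
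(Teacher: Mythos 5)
Your overall strategy matches the paper's: compute $L(\lambda_1)$ via the eigenvalue derivative formulas, relate $L(u_{V_1V_1})$ to $F^{i\bar{i}}V_1V_1(\tilde{g}_{i\bar{i}})$ by commutator manipulations, differentiate the equation twice in the direction $V_1$ to use the concavity of $f$, and absorb the remaining third-order cross-terms by Cauchy--Schwarz at the cost of the parameter $\ve$. Two points, however, need attention. The decomposition $V_1 = \tfrac{1}{\sqrt{2}}(W_1 + \bar{W}_1)$ followed by a separate treatment of the $(1,1)$, $(2,0)$ and $(0,2)$ pieces is an unnecessary detour: the paper applies $L$ directly to $u_{V_1V_1}$ and commutes $e_i\bar{e}_i - [e_i,\bar{e}_i]^{(0,1)}$ past $V_1V_1$ in real form, which is cleaner. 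More importantly, the paper's route ensures the concavity term appears as $-F^{i\bar{k},j\bar{l}}V_1(\tilde{g}_{i\bar{k}})V_1(\tilde{g}_{j\bar{l}})\geq 0$ because $V_1(\tilde{g}_{i\bar{k}})$ is a \emph{real-direction} derivative of a Hermitian matrix and hence Hermitian-valued; if instead you differentiate the equation separately by $W_1W_1$ and $\bar{W}_1\bar{W}_1$, the pieces $F^{i\bar{k},j\bar{l}}W_1(\tilde{g}_{i\bar{k}})W_1(\tilde{g}_{j\bar{l}})$ have no definite sign and you must recombine before invoking concavity, so the detour buys nothing.

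The more serious gap is the schematic claim $L(u_{V_1V_1}) = F^{i\bar{i}}V_1V_1(\tilde{g}_{i\bar{i}}) + O(\lambda_1\mathcal{F})$. This is not correct: when you move $e_i\bar{e}_i - [e_i,\bar{e}_i]^{(0,1)}$ past $V_1V_1$, the commutators produce genuine third-order terms in $u$, namely
\[
T = F^{i\bar{i}}\big\{[V_1,\bar{e}_i]V_1e_i(u) + [V_1,e_i]V_1\bar{e}_i(u)\big\},
\]
which are \emph{not} bounded by $C\lambda_1\mathcal{F}$. Controlling $T$ is exactly what produces the loss from $2$ to $2-\ve$ and the subtracted term $-\ve F^{i\bar{i}}|(\lambda_1)_i|^2/\lambda_1$ in the lemma; if your schematic identity were literally true you would obtain a strictly stronger inequality with coefficient $2$ and no $\ve$-loss, which is inconsistent with the form of the statement you are asked to prove. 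Your final paragraph does gesture at the right fix, but as written the middle of your argument contradicts the ending, and the crucial mechanism is missing: one must expand $[V_1,e_i] = \sum_\beta \tau_{i\beta} V_\beta$ in the eigenbasis, observe that $V_\alpha V_1 e_i(u) = e_i(u_{V_1V_\alpha}) + O(\lambda_1)$ (using $\lambda_1 \geq K$ to absorb the lower-order terms uniformly in $K$), and then apply Cauchy--Schwarz with weight $1/\lambda_1$ for $\alpha=1$ (giving $\ve F^{i\bar{i}}|(\lambda_1)_i|^2/\lambda_1$ via $(\lambda_1)_i = e_i(u_{V_1V_1})$) and weight $1/(\lambda_1-\lambda_\alpha)$ for $\alpha>1$ (giving the $\ve$-loss in the good term). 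Also note that the phrase ``the diagonal term $\alpha=1$ in the eigenvalue-perturbation sum'' is slightly misleading: the formula for $\partial^2\lambda_1$ contains no $\mu=1$ summand; the $\alpha=1$ case arises only inside the commutator term $T$, not in the eigenvalue Hessian itself.
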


\begin{proof}
The following formulas are well-known (see e.g. \cite[Lemma 5.2]{CTW19}):
\begin{equation*}
\begin{split}
\frac{\partial \lambda_{1}}{\partial \Phi^{\alpha}_{\beta} }
= {} & V_{1}^{\alpha}V_{1}^{\beta}, \\
\frac{\partial^{2} \lambda_{1}}{\partial \Phi^{\alpha}_{\beta}\partial \Phi^{\gamma}_{\delta}}
= {} & \sum_{\mu>1}\frac{V_{1}^{\alpha}V_{\mu}^{\beta}V_{\mu}^{\gamma}V_{1}^{\delta} +V_{\mu}^{\alpha}V_{1}^{\beta}V_{1}^{\gamma}V_{\mu}^{\delta}}{\lambda_{1}-\lambda_{\mu}}.
\end{split}
\end{equation*}
Then we compute
\begin{equation}\label{lower bound of L lambda1 eqn 1}
\begin{split}
L(\lambda_{1})
= {} & F^{i\bar{i}}\frac{\partial^{2} \lambda_{1}}{\partial \Phi^{\alpha}_{\beta}\partial \Phi^{\gamma}_{\delta}}e_{i}(\Phi^{\gamma}_{\delta})\bar{e}_{i}(\Phi^{\alpha}_{\beta})
+F^{i\bar{i}}\frac{\partial \lambda_{1}}{\partial\Phi^{\alpha}_{\beta}}(e_{i}\bar{e}_{i}
-[e_{i},\bar{e}_{i}]^{(0,1)})(\Phi^{\alpha}_{\beta})\\
= {} & F^{i\bar{i}}\frac{\partial^{2} \lambda_{1}}{\partial \Phi^{\alpha}_{\beta}\partial \Phi^{\gamma}_{\delta}}e_{i}(u_{\gamma\delta})\bar{e}_{i}(u_{\alpha\beta})+F^{i\bar{i}}\frac{\partial \lambda_{1}}{\partial\Phi^{\alpha}_{\beta}}(e_{i}\bar{e}_{i}-[e_{i},\bar{e}_{i}]^{(0,1)})(u_{\alpha\beta})\\
&+F^{i\bar{i}}\frac{\partial \lambda_{1}}{\partial\Phi^{\alpha}_{\beta}}u_{\gamma\beta}e_{i}\bar{e}_{i}(\chi^{\alpha\gamma})\\
\geq {} & 2\sum_{\alpha>1}\frac{F^{i\bar{i}}|e_{i}(u_{V_{1}V_{\alpha}})|^{2}}{\lambda_{1}-\lambda_{\alpha}}
+F^{i\bar{i}}(e_{i}\bar{e}_{i}-[e_{i},\bar{e}_{i}]^{(0,1)})(u_{V_{1}V_{1}})-C\lambda_{1}\mathcal{F},\\
\end{split}
\end{equation}
where $(\chi^{\alpha\beta})$ denotes the inverse of the matrix $(\chi_{\alpha\beta})$. Note that $\mathcal{F}\geq \tau$ has uniform positive lower bound thanks to Proposition \ref{prop subsolution}.

\begin{claim}\label{claim 1}
At $x_{0}$, we have
\begin{equation*}
\begin{split}
F^{i\bar{i}}(e_{i}\bar{e}_{i}-[e_{i},\bar{e}_{i}]^{(0,1)})(u_{V_{1}V_{1}})
\geq {}  -F^{i\bar{k},j\bar{l}}V_{1}(\tilde{g}_{i\bar{k}})V_{1}(\tilde{g}_{j\bar{l}})
-C\lambda_{1}\mathcal{F}-2T,
\end{split}
\end{equation*}
where
\[
T:= F^{i\bar{i}}\big\{[V_{1},\bar{e}_{i}]V_{1}e_{i}(u)+[V_{1},e_{i}]V_{1}\bar{e}_{i}(u)\big\}.
\]
\end{claim}

\begin{proof}[Proof of Claim \ref{claim 1}]
It is clear that
\begin{equation}\label{claim2.1}
\begin{split}
& F^{i\bar{i}}(e_{i}\bar{e}_{i}-[e_{i},\bar{e}_{i}]^{(0,1)})(u_{V_{1}V_{1}}) \\
= {} & F^{i\bar{i}}(e_{i}\bar{e}_{i}-[e_{i},\bar{e}_{i}]^{(0,1)})
(V_{1}V_{1}(u)-(\nabla_{V_{1}}V_{1})(u)) \\
\geq {} &
F^{i\bar{i}}e_{i}\bar{e}_{i}V_{1}V_{1}(u)-F^{i\bar{i}}e_{i}\bar{e}_{i}(\nabla_{V_{1}}V_{1})(u)
-F^{i\bar{i}}[e_{i},\bar{e}_{i}]^{(0,1)}V_{1}V_{1}(u)-C\lambda_{1}\mathcal{F}.
\end{split}
\end{equation}	
Set $W=\nabla_{V_{1}}V_{1}$. Then
\begin{equation}\label{claim 1 eqn 4}
\begin{split}
& e_{i}\bar{e}_{i}W(u)
= e_{i}W\bar{e}_{i}(u)+e_{i}[\bar{e}_{i},W](u)\\[1mm]
= {} & We_{i}\bar{e}_{i}(u)+[e_{i},W]\bar{e}_{i}(u)+e_{i}[\bar{e}_{i},W](u)\\
= {} & W(\tilde{g}_{i\bar{i}}){-W(g_{i\ov{i}})}+W[e_{i},\bar{e}_{i}]^{(0,1)}(u)
+[e_{i},W]\bar{e}_{i}(u)+e_{i}[\bar{e}_{i},W](u).
\end{split}
\end{equation}
Applying $W$ to the equation \eqref{nonlinear equation},
\begin{equation*}
F^{i\bar{i}}W(\tilde{g}_{i\bar{i}})=W(h).
\end{equation*}
Substituting this into \eqref{claim 1 eqn 4},
\[
|F^{i\bar{i}}e_{i}\bar{e}_{i}(\nabla_{V_{1}}V_{1})(u)|
= |F^{i\bar{i}}e_{i}\bar{e}_{i}W(u)|
\leq C\lambda_{1}\mathcal{F}.
\]
Combining this with \eqref{claim2.1},
\begin{equation}\label{claim 1 eqn 1}
\begin{split}
& F^{i\bar{i}}(e_{i}\bar{e}_{i}-[e_{i},\bar{e}_{i}]^{(0,1)})(u_{V_{1}V_{1}}) \\
\geq {} & F^{i\bar{i}}\big\{e_{i}\bar{e}_{i}V_{1}V_{1}(u)
-[e_{i},\bar{e}_{i}]^{(0,1)}V_{1}V_{1}(u)\big\}-C\lambda_{1}\mathcal{F}.
\end{split}
\end{equation}
We use $O(\lambda_{1})$ to denote a term satisfying $|O(\lambda_{1})|\leq C\lambda_{1}$ for some uniform constant $C$. Then direct calculation shows
\begin{equation}\label{claim 1 eqn 5}
\begin{split}
& F^{i\bar{i}}\big\{e_{i}\bar{e}_{i}V_{1}V_{1}(u)
-[e_{i},\bar{e}_{i}]^{(0,1)}V_{1}V_{1}(u)\big\}\\
= {} & F^{i\bar{i}}\big\{e_{i}V_{1}\bar{e}_{i}V_{1}(u)-e_{i}[V_{1},\bar{e}_{i}]V_{1}(u)
-V_{1}[e_{i},\bar{e}_{i}]^{(0,1)}V_{1}(u)\big\}+O(\lambda_{1})\mathcal{F} \\
= {} & F^{i\bar{i}}\big\{V_{1}e_{i}\bar{e}_{i}V_{1}(u)-[V_{1},e_{i}]\bar{e}_{i}V_{1}(u)-[V_{1},\bar{e}_{i}]e_{i}V_{1}(u)
-V_{1}V_{1}[e_{i},\bar{e}_{i}]^{(0,1)}(u)\big\}+O(\lambda_{1})\mathcal{F}\\	
= {} & F^{i\bar{i}}\big\{V_{1}e_{i}\bar{e}_{i}V_{1}(u)-V_{1}V_{1}[e_{i},\bar{e}_{i}]^{(0,1)}(u)\big\}
+O(\lambda_{1})\mathcal{F}-T\\
= {} & F^{i\bar{i}}\big\{V_{1}e_{i}V_{1}\bar{e}_{i}(u)-V_{1}e_{i}[V_{1},\bar{e}_{i}](u)-V_{1}V_{1}[e_{i},\bar{e}_{i}]^{(0,1)}(u)\big\}
+O(\lambda_{1})\mathcal{F}-T\\
= {} & F^{i\bar{i}}\big\{V_{1}V_{1}(e_{i}\bar{e}_{i}-[e_{i},\bar{e}_{i}]^{(0,1)})(u)-V_{1}[V_{1},e_{i}]\bar{e}_{i}(u)-V_{1}e_{i}[V_{1},\bar{e}_{i}](u)\big\}
+O(\lambda_{1})\mathcal{F}-T\\
= {} & F^{i\bar{i}}V_{1}V_{1}\big(e_{i}\bar{e}_{i}(u)-[e_{i},\bar{e}_{i}]^{(0,1)}(u)\big)
+O(\lambda_{1})\mathcal{F}-2T \\
= {} & F^{i\bar{i}}V_{1}V_{1}(\ti{g}_{i\ov{i}})+O(\lambda_{1})\mathcal{F}-2T,
\end{split}
\end{equation}
where in the second-to-last line we used
\[
\begin{split}
& F^{i\bar{i}}(V_{1}[V_{1},e_{i}]\bar{e}_{i}(u)+V_{1}e_{i}[V_{1},\bar{e}_{i}](u)) \\
= {} & T+F^{i\bar{i}}\{[V_{1},[V_{1},e_{i}]]\bar{e}_{i}(u)+[e_{i},[V_{1},\bar{e}_{i}]](u)+[V_{1},[V_{1},\bar{e}_{i}]]e_{i}(u)\} \\
= {} & T+O(\lambda_{1})\mathcal{F}.
\end{split}
\]
Substituting \eqref{claim 1 eqn 5} into \eqref{claim 1 eqn 1},
\begin{equation}\label{claim 1 eqn 2}
F^{i\bar{i}}(e_{i}\bar{e}_{i}-[e_{i},\bar{e}_{i}]^{(0,1)})(u_{V_{1}V_{1}})
\geq F^{i\bar{i}}V_{1}V_{1}(\ti{g}_{i\ov{i}})-C\lambda_{1}\mathcal{F}-2T.
\end{equation}
To deal with the first term, we apply $V_{1}V_{1}$ to the equation \eqref{nonlinear equation} and obtain
\begin{equation}\label{claim 1 eqn 3}
F^{i\bar{i}}V_{1}V_{1}(\tilde{g}_{i\bar{i}})
= -F^{i\bar{k},j\bar{l}}V_{1}(\tilde{g}_{i\bar{k}})V_{1}(\tilde{g}_{j\bar{l}})+V_{1}V_{1}(h).
\end{equation}
Then Claim 1 follows from \eqref{claim 1 eqn 2} and \eqref{claim 1 eqn 3}.
\end{proof}

\begin{claim}\label{claim 2}
For $\ve\in(0,\frac{1}{3}]$, at $x_{0}$, we have
\[
2T \leq \ve\frac{F^{i\bar{i}}|(\lambda_{1})_{i}|^{2}}{\lambda_{1}}
+\ve\sum_{\alpha>1}\frac{F^{i\bar{i}}|e_{i}(u_{V_{1}V_{\alpha}})|^{2}}{\lambda_{1}-\lambda_{\alpha}}
+\frac{C}{\ve}\lambda_{1}\mathcal{F},
\]
assuming without loss of generality that $\lambda_{1}\geq K$.
\end{claim}

\begin{proof}[Proof of Claim \ref{claim 2}]
At $x_{0}$, we write
\[
[V_{1},e_{i}]=\sum_{\beta}\tau_{i\beta}V_{\beta}, \quad
[V_{1},\bar{e}_{i}]=\sum_{\beta}\overline{\tau_{i\beta}}V_{\beta},
\]
where $\tau_{i\beta}\in\mathbb{C}$ are uniformly bounded constants. Then
\[
2T = 2F^{i\bar{i}}\big\{[V_{1},\bar{e}_{i}]V_{1}e_{i}(u)
+[V_{1},e_{i}]V_{1}\bar{e}_{i}(u)\big\}
\leq C\sum_{\alpha}F^{i\ov{i}}|V_{\alpha}V_{1}e_{i}(u)|.
\]
Using $\lambda_{1}\geq K$, we compute
\[
\begin{split}
\big|V_{\alpha}V_{1}e_{i}(u)\big|
= {} &
\big|e_{i}V_{\alpha}V_{1}(u)+V_{\alpha}[V_{1},e_{i}](u)+[V_{\alpha},e_{i}]V_{1}(u)\big|  \\
= {} & \big|e_{i}(u_{V_{\alpha}V_{1}})+e_{i}(\nabla_{V_{1}}V_{\alpha})(u)+V_{\alpha}[V_{1},e_{i}](u)
+[V_{\alpha},e_{i}]V_{1}(u)\big|\\
\leq {} & \big|e_{i}(u_{V_{1}V_{\alpha}})\big|+C\lambda_{1}.
\end{split}
\]
It then follows that
\[
\begin{split}
2T \leq {} &
C\sum_{\alpha}F^{i\ov{i}}\big|e_{i}(u_{V_{1}V_{\alpha}})\big|+C\lambda_{1}\mathcal{F}\\
= {} & CF^{i\ov{i}}\big|e_{i}(u_{V_{1}V_{1}})\big| +C\sum_{\alpha>1}F^{i\ov{i}}\big|e_{i}(u_{V_{1}V_{\alpha}})\big|+C\lambda_{1}\mathcal{F}\\
\leq {} & \ve\frac{F^{i\bar{i}}|e_{i}(u_{V_{1}V_{1}})|^{2}}{\lambda_{1}}
+\frac{C}{\ve}\lambda_{1}\mathcal{F}
+\ve\sum_{\alpha>1}\frac{F^{i\bar{i}}|e_{i}(u_{V_{1}V_{\alpha}})|^{2}}{\lambda_{1}-\lambda_{\alpha}}	+\frac{C}{\ve}\sum_{\alpha>1}(\lambda_{1}-\lambda_{\alpha})\mathcal{F}+C\lambda_{1}\mathcal{F}\\
\leq {} & \ve\frac{F^{i\bar{i}}|(\lambda_{1})_{i}|^{2}}{\lambda_{1}}
+\ve\sum_{\alpha>1}\frac{F^{i\bar{i}}|e_{i}(u_{V_{1}V_{\alpha}})|^{2}}{\lambda_{1}-\lambda_{\alpha}}+\frac{C}{\ve}\lambda_{1}\mathcal{F},
\end{split}
\]
where we used $(\lambda_{1})_{i}=e_{i}(u_{V_{1}V_{1}})$ and \eqref{Hessian lambda 1} in the last inequality.
\end{proof}

Combining \eqref{lower bound of L lambda1 eqn 1}, Claim 1 and 2, we obtain Lemma \ref{lower bound of L lambda1}.
\end{proof}

\subsubsection{Lower bound of $L(|\rho|^{2})$}
\begin{lemma}\label{lower bound of L rho}
For $\ve\in(0,\frac{1}{3}]$, at $x_{0}$, we have
\begin{equation*}
\begin{split}
L(|\rho|^{2})\geq (2-\varepsilon)\sum_{\alpha,\beta}F^{i\bar{i}}|e_{i}(u_{\alpha\beta})|^{2}-\frac{C}{\varepsilon}N^{2}\mathcal F.\\
\end{split}
\end{equation*}
\end{lemma}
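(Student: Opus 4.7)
The plan is to expand $L(|\rho|^{2})$ directly and isolate the positive third-order square term, then treat the resulting cross term by commuting derivatives and exploiting the differentiated equation, much as in Claim 1 of Lemma \ref{lower bound of L lambda1}.

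First, writing $|\rho|^{2} = \chi^{\alpha\gamma}\chi^{\beta\delta}\rho_{\alpha\beta}\rho_{\gamma\delta}$ with $\rho_{\alpha\beta}=u_{\alpha\beta}+N\chi_{\alpha\beta}$ and using \eqref{first derivative of background metric}, one checks at $x_{0}$ that $e_{i}(\rho_{\alpha\beta})=e_{i}(u_{\alpha\beta})$ and that differentiating $\chi^{\alpha\gamma}$ twice contributes only errors of order $N^{2}\mathcal{F}$. Expanding $e_{i}\bar{e}_{i}(|\rho|^{2})$ by the product rule and noting that $[e_{i},\bar{e}_{i}]^{(0,1)}$ is first-order (so it produces no square term when applied to $|\rho|^{2}$), one obtains
\[
L(|\rho|^{2}) \;\geq\; 2\sum_{\alpha,\beta}F^{i\bar{i}}|e_{i}(u_{\alpha\beta})|^{2}
\;+\; 2\sum_{\alpha,\beta}\rho^{\alpha\beta}\,F^{i\bar{i}}(e_{i}\bar{e}_{i}-[e_{i},\bar{e}_{i}]^{(0,1)})(u_{\alpha\beta})\;-\;CN^{2}\mathcal{F}.
\]
This already gives the leading $2\sum_{\alpha,\beta}F^{i\bar{i}}|e_{i}(u_{\alpha\beta})|^{2}$; the task reduces to controlling the cross term.

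Diagonalise $\rho$ at $x_{0}$ using the eigenvectors $V_{\alpha}$ of $\nabla^{2}u$ so that $\rho^{\alpha\beta}$ is diagonal with positive entries $\lambda_{\alpha}+N \leq CN$. For each fixed $\alpha$, I commute $e_{i}\bar{e}_{i}-[e_{i},\bar{e}_{i}]^{(0,1)}$ past $V_{\alpha}V_{\alpha}$ exactly as in Claim 1 of Lemma \ref{lower bound of L lambda1}, picking up bracket terms that are either bounded by $O(N)\mathcal{F}$ or first-order in $u_{\gamma\delta}$ (hence third-order in $u$) with coefficients bounded independently of $N$. Then $V_{\alpha}V_{\alpha}$ applied to \eqref{nonlinear equation} gives
\[
F^{i\bar{i}}V_{\alpha}V_{\alpha}(\tilde{g}_{i\bar{i}})
= -F^{i\bar{k},j\bar{l}}V_{\alpha}(\tilde{g}_{i\bar{k}})V_{\alpha}(\tilde{g}_{j\bar{l}}) + V_{\alpha}V_{\alpha}(h).
\]
By concavity of $f$ the first term on the right is nonnegative, and multiplying by the positive weight $\rho^{\alpha\alpha}$ preserves this sign; the inhomogeneous piece $\rho^{\alpha\alpha}V_{\alpha}V_{\alpha}(h)$ contributes at most $CN\mathcal{F}\leq CN^{2}\mathcal{F}$.

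It remains to absorb the commutator remainders, which have the schematic form $\rho^{\alpha\alpha}F^{i\bar{i}}|e_{i}(u_{\gamma\delta})|$ with bounded coefficients. Using the basic Cauchy-Schwarz inequality
\[
\rho^{\alpha\alpha}F^{i\bar{i}}|e_{i}(u_{\gamma\delta})| \;\leq\; \tfrac{\varepsilon}{C}\,F^{i\bar{i}}|e_{i}(u_{\gamma\delta})|^{2} + \tfrac{C}{\varepsilon}(\rho^{\alpha\alpha})^{2}F^{i\bar{i}} \;\leq\; \tfrac{\varepsilon}{C}\,F^{i\bar{i}}|e_{i}(u_{\gamma\delta})|^{2} + \tfrac{C}{\varepsilon}N^{2}F^{i\bar{i}},
\]
and summing over $\alpha,\gamma,\delta$, these remainders reduce the coefficient of the positive square term from $2$ to $2-\varepsilon$ and feed a $\tfrac{C}{\varepsilon}N^{2}\mathcal{F}$ error. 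Combining everything yields the claimed bound.

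The main obstacle is the bookkeeping in the commutator step: one must verify, as in Claim 1 above, that after moving $e_{i}\bar{e}_{i}-[e_{i},\bar{e}_{i}]^{(0,1)}$ past $V_{\alpha}V_{\alpha}$ every bracket either (a) hits the equation and becomes $V_{\alpha}V_{\alpha}(h)$ plus a concavity term, (b) is a first-order operator on $u_{\gamma\delta}$ absorbable by Cauchy-Schwarz, or (c) acts on at most one derivative of $u$ and is therefore genuinely $O(N)\mathcal{F}$. The almost-complex structure contributes additional commutators $[e_{i},V_{\alpha}]$, $[\bar{e}_{i},V_{\alpha}]$, but these are smooth and bounded, so they fit into category (b) or (c).
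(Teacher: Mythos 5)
Your proposal follows essentially the same strategy as the paper's proof: expand $L(|\rho|^2)$ to isolate the positive square term $2\sum F^{i\bar i}|e_i(u_{\alpha\beta})|^2$, commute the linearized operator past the real Hessian to reach the twice-differentiated equation, drop the concavity term using $\rho>0$, and absorb the commutator remainders by Cauchy--Schwarz into $\varepsilon\sum F^{i\bar i}|e_i(u_{\alpha\beta})|^2 + \frac{C}{\varepsilon}N^2\mathcal F$. The only difference is cosmetic: you diagonalize $\rho$ in the $V_\alpha$ eigenbasis so that the concavity step reads as a sum of manifestly nonnegative summands $\rho^{\alpha\alpha}\bigl(-F^{i\bar k,j\bar l}V_\alpha(\tilde g_{i\bar k})V_\alpha(\tilde g_{j\bar l})\bigr)$, whereas the paper keeps the coordinate frame $\partial_\alpha$ and invokes positivity of the tensor product $\rho_{\alpha\beta}\otimes(-F^{i\bar k,j\bar l})$ directly; these are equivalent by a change of basis.
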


\begin{proof}
Applying $\de_{\alpha}\de_{\beta}$ to the equation \eqref{nonlinear equation},
\begin{equation*}
F^{i\bar{i}}\partial_{\alpha}\partial_{\beta}(\tilde{g}_{i\bar{i}})=- F^{i\bar{k},j\bar{l}}\partial_{\alpha}(\tilde{g}_{i\bar{k}})\partial_{\beta}(\tilde{g}_{j\bar{l}})+\partial_{\alpha}\partial_{\beta}(h).	\end{equation*}
By the similar calculation of \eqref{claim 1 eqn 2}, we obtain
\begin{equation*}
\begin{split}
& F^{i\bar{i}}(e_{i}\bar e_{i}-[e_{i}, \bar{e}_{i}]^{(0,1)})(u_{\alpha\beta})\\
= {} & F^{i\bar{i}}\partial_{\alpha}\partial_{\beta}(\ti{g}_{i\bar{i}})
-2F^{i\bar{i}}\mathrm{Re}([\partial_{\beta}, e_{i}]\partial_{\alpha}\bar{e}_{i}u)-2F^{i\bar{i}} \mathrm{Re}([\partial_{\alpha}, e_{i}] \partial_{\beta}\bar{e}_{i}u)+O(\lambda_{1})\mathcal{F}\\
= {} & -F^{i\bar{k},j\bar{l}}\partial_{\alpha}(\tilde{g}_{i\bar{k}})\partial_{\beta}(\tilde{g}_{j\bar{l}})
-2F^{i\bar{i}}\mathrm{Re}([\partial_{\beta}, e_{i}]\partial_{\alpha}\bar{e}_{i}u)
-2F^{i\bar{i}}\mathrm{Re}([\partial_{\alpha}, e_{i}] \partial_{\beta}\bar{e}_{i}u)+O(\lambda_{1})\mathcal{F}.
\end{split}
\end{equation*}
Using \eqref{second derive of F}, the concavity of $f$  and $\rho>0$, we have	
\[
-2F^{i\bar{j},p\bar{q}}\rho_{\alpha\beta}\partial_{\alpha}(\tilde{g}_{i\bar{j}})\partial_{\beta}(\tilde{g}_{p\bar{q}})\geq 0,
\]
and so
\begin{equation*}
\begin{split}
L(|\rho|^{2}) = {} &
2\sum_{\alpha,\beta}F^{i\bar{i}}|e_{i}(u_{\alpha\beta})|^{2}
+2\sum_{\alpha,\beta}F^{i\bar{i}}(e_{i}\bar e_{i}-[e_{i},\bar{e}_{i}]^{(0,1)})(u_{\alpha\beta})\rho_{\alpha\beta} \\
& +\sum_{\alpha,\beta,\gamma,\delta}F^{i\ov{i}}e_{i}e_{\ov{i}}(\chi^{\alpha\gamma}\chi^{\beta\delta})\rho_{\alpha\beta}\rho_{\gamma\delta} \\
\geq {} &
\sum_{\alpha,\beta}2F^{i\bar{i}}|e_{i}(u_{\alpha\beta})|^{2}-2\sum_{\alpha,\beta}\rho_{\alpha\beta}F^{i\bar{i}}\mathrm{Re}([\partial_{\beta}, e_{i}]\partial_{\alpha}\bar{e}_{i}u)
\\[0.5mm]
& -2\sum_{\alpha,\beta}\rho_{\alpha\beta}F^{i\bar{i}}\mathrm{Re}([\partial_{\alpha}, e_{i}] \partial_{\beta}\bar{e}_{i}u)-CN^{2}\mathcal{F}.
\end{split}
\end{equation*}
Using the similar argument of Claim 2 in Lemma \ref{lower bound of L lambda1},
\begin{equation*}
\begin{split}
& 2\sum_{\alpha,\beta}\rho_{\alpha\beta}F^{i\bar{i}}\mathrm{Re}([\partial_{\beta}, e_{i}]\partial_{\alpha}\bar{e}_{i}u)
+2\sum_{\alpha,\beta}\rho_{\alpha\beta}F^{i\bar{i}}\mathrm{Re}([\partial_{\alpha}, e_{i}] \partial_{\beta}\bar{e}_{i}u) \\
\leq {} & \ve\sum_{\alpha,\beta}F^{i\bar{i}}|e_{i}(u_{\alpha\beta})|^{2}+\frac{C}{\varepsilon}N^{2}\mathcal{F}.
\end{split}
\end{equation*}
Then we obtain Lemma \ref{lower bound of L rho}.
\end{proof}

\subsubsection{Lower bound of $L(|\de u|^{2})$}
\begin{lemma}\label{lower bound of L de u}
At $x_{0}$, we have
\begin{equation}\label{3.7}
L(|\partial u|^{2}) \geq \frac{3}{4} \sum_{i,j}F^{i\bar{i}}(|e_{i}e_{j}u|^{2}+|e_{i}\bar{e}_{j}u|^{2})-CK\mathcal{F}.
\end{equation}
\end{lemma}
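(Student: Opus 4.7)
\textbf{Proof plan for Lemma \ref{lower bound of L de u}.} The strategy is to compute $L(|\partial u|^{2})$ by Leibniz, identify the two desired positive quadratic sums as the cross-derivative products, then control the remaining first-order cross term via differentiation of the equation plus a Cauchy--Schwarz absorption that costs a quarter of the positive term.

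First, since $u$ is real we have $\overline{e_{j}u}=\bar e_{j}u$, so $|\partial u|^{2}=\sum_{j}e_{j}(u)\,\bar e_{j}(u)$. Expanding $(e_{i}\bar e_{i}-[e_{i},\bar e_{i}]^{(0,1)})(e_{j}u\cdot \bar e_{j}u)$ by the Leibniz rule, and identifying the cross-derivative products via $\overline{e_{i}e_{j}u}=\bar e_{i}\bar e_{j}u$ and $\overline{\bar e_{i}e_{j}u}=e_{i}\bar e_{j}u$, one gets (using that $L$ is a real operator, so $L(\bar e_{j}u)=\overline{L(e_{j}u)}$)
\begin{equation*}
L(|\partial u|^{2})\;=\;2\,\mathrm{Re}\sum_{j}L(e_{j}u)\,\bar e_{j}u \;+\; \sum_{i,j}F^{i\bar i}\bigl(|e_{i}e_{j}u|^{2}+|e_{i}\bar e_{j}u|^{2}\bigr).
\end{equation*}
Thus the proof reduces to bounding the cross term from below by $-\tfrac14\sum_{i,j}F^{i\bar i}(|e_{i}e_{j}u|^{2}+|e_{i}\bar e_{j}u|^{2})-CK\mathcal F$.

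To control $L(e_{j}u)$, apply $e_{j}$ to \eqref{nonlinear equation} to get $F^{i\bar i}e_{j}(\tilde g_{i\bar i})=e_{j}(h)$. Using $\tilde g_{i\bar i}=g_{i\bar i}+(e_{i}\bar e_{i}-[e_{i},\bar e_{i}]^{(0,1)})(u)$ and commuting $e_{j}$ past the second-order operator defining $L$, one obtains
\begin{equation*}
L(e_{j}u)\;=\;e_{j}(h)-F^{i\bar i}e_{j}(g_{i\bar i})-F^{i\bar i}\bigl([e_{j},e_{i}]\bar e_{i}u+e_{i}[e_{j},\bar e_{i}]u\bigr)+F^{i\bar i}\bigl[e_{j},[e_{i},\bar e_{i}]^{(0,1)}\bigr]u.
\end{equation*}
Expanding the brackets in the frame $\{e_{k},\bar e_{k}\}$ produces, on the one hand, first-order-in-$u$ terms bounded pointwise by $C|\partial u|\le C\sqrt K$, and on the other hand second-order cross terms of the four shapes $\{e_{i}e_{k}u,\,e_{i}\bar e_{k}u,\,e_{k}\bar e_{i}u,\,\bar e_{k}\bar e_{i}u\}$ multiplied by bounded coefficients and the weight $F^{i\bar i}$.

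Finally, multiply $L(e_{j}u)$ by $\bar e_{j}u$ (contributing a factor $|\partial u|\le\sqrt K$), sum in $j$, and apply Cauchy--Schwarz weighted by $F^{i\bar i}$. The first-order contributions give at most $C|\partial u|^{2}\mathcal F\le CK\mathcal F$. For a typical second-order remainder,
\begin{equation*}
\bigl|F^{i\bar i}(e_{i}\bar e_{k}u)\cdot \bar e_{j}u\bigr|\;\le\;\varepsilon F^{i\bar i}|e_{i}\bar e_{k}u|^{2}+\frac{C}{\varepsilon}F^{i\bar i}|\bar e_{j}u|^{2},
\end{equation*}
and analogously for the other three shapes; terms of the form $F^{i\bar i}|e_{k}\bar e_{i}u|^{2}$ (indices swapped relative to the weight) are converted into $F^{i\bar i}|e_{i}\bar e_{k}u|^{2}$ up to $O(|\partial u|^{2})$ by $e_{k}\bar e_{i}u=\bar e_{i}e_{k}u+[e_{k},\bar e_{i}]u$. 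Choosing $\varepsilon$ so that the finitely many resulting coefficients of the quadratic term sum to at most $\tfrac14$, one obtains
\begin{equation*}
\Bigl|2\,\mathrm{Re}\sum_{j}L(e_{j}u)\,\bar e_{j}u\Bigr|\;\le\;\frac{1}{4}\sum_{i,j}F^{i\bar i}\bigl(|e_{i}e_{j}u|^{2}+|e_{i}\bar e_{j}u|^{2}\bigr)+CK\mathcal F,
\end{equation*}
which combined with the identity from the first step gives the lemma. The main technical care, rather than a deep obstacle, is the bookkeeping in the almost Hermitian setting: $[e_{j},e_{i}]$ and $[e_{j},\bar e_{i}]$ have nonzero $(1,0)$- \emph{and} $(0,1)$-parts, so one must ensure after the index-swap identity above that each remainder lands on a quantity actually available in $\sum_{i,j}F^{i\bar i}(|e_{i}e_{j}u|^{2}+|e_{i}\bar e_{j}u|^{2})$, rather than on an unweighted or wrongly weighted second derivative.
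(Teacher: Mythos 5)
Your proposal is correct and follows essentially the same route as the paper's proof: expand $L(|\partial u|^{2})$ by Leibniz to isolate the positive quadratic sum $\sum_{i,j}F^{i\bar i}(|e_{i}e_{j}u|^{2}+|e_{i}\bar e_{j}u|^{2})$, rewrite the remaining cross term by differentiating the equation and commuting $e_{j}$ past the operator defining $L$, and absorb the resulting second-order remainders via a weighted Cauchy--Schwarz at cost $\frac14$ of the positive sum plus $CK\mathcal F$. The paper writes this as $I_{1}+I_{2}+I_{3}$ rather than $2\,\mathrm{Re}\sum_{j}L(e_{j}u)\bar e_{j}u$, and does not spell out the index-swap identity $e_{k}\bar e_{i}u=\bar e_{i}e_{k}u+[e_{k},\bar e_{i}]u$ that you note, but these are only presentational differences.
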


\begin{proof}
Direct calculation shows
\[
L(|\partial u|^{2})=F^{i\bar{i}}\big({e_{i}e_{\bar{i}}}(|\partial u|^{2})-[e_{i},\bar{e}_{i}]^{(0,1)}(|\partial u|^{2})\big) = I_{1}+I_{2}+I_{3},
\]
where
\[
\begin{split}
I_{1}
:= {} & F^{i\bar{i}}(e_{i}\bar{e}_{i}e_{j}u
-[e_{i},\bar{e}_{i}]^{(0,1)}e_{j}u)\bar{e}_{j}u, \\
I_{2}
:= {} & F^{i\bar{i}}(e_{i}\bar{e}_{i}\bar{e}_{j}u
-[e_{i},\bar{e}_{i}]^{(0,1)}\bar{e}_{j}u)e_{j}u,\\
I_{3}
:= {} & F^{i\bar{i}}(|e_{i}e_{j}u|^{2}
+|e_{i}\bar{e}_{j}u|^{2}).
\end{split}
\]
Applying $e_{j}$ to the equation \eqref{nonlinear equation},
\[
F^{i\bar{i}}(e_{j}e_{i}\bar{e}_{i}u-e_{j}[e_{i},\bar{e}_{i}]^{(0,1)}u)=h_{j}.
\]
Note that
\[
\begin{split}
& F^{i\bar{i}}(e_{i}\bar{e}_{i}e_{j}u-[e_{i},\bar{e}_{i}]^{(0,1)}e_{j}u) \\
= {} &  F^{i\bar{i}}(e_{j}e_{i}\bar{e}_{i}u+e_{i}[\bar{e}_{i},e_{j}]u
+[e_{i},e_{j}]\bar{e}_{i}u-[e_{i},\bar{e}_{i}]^{(0,1)}e_{j}u)\\
= {} & h_{j}+F^{i\bar{i}}e_{j}[e_{i},\bar{e}_{i}]^{(0,1)}u
+F^{i\bar{i}}(e_{i}[\bar{e}_{i},e_{j}]u
+[e_{i},e_{j}]\bar{e}_{i}u-[e_{i},\bar{e}_{i}]^{(0,1)}e_{j}u)\\
= {} & h_{j}+F^{i\bar{i}}\big\{e_{i}[\bar{e}_{i},e_{j}]u
+\bar{e}_{i}[e_{i},e_{j}]u+[[e_{i},e_{j}],\bar{e}_{i}]u
-[[e_{i},\bar{e}_{i}]^{(0,1)},e_{j}]u\big\}.
\end{split}
\]
Similarly,
\[
\begin{split}
& F^{i\bar{i}}(e_{i}\bar{e}_{i}e_{j}u-[e_{i},\bar{e}_{i}]^{(0,1)}e_{j}u) \\
= {} & h_{\ov{j}}+F^{i\bar{i}}\big\{e_{i}[\bar{e}_{i},\ov{e}_{j}]u
+\bar{e}_{i}[e_{i},\ov{e}_{j}]u+[[e_{i},\ov{e}_{j}],\bar{e}_{i}]u
-[[e_{i},\bar{e}_{i}]^{(0,1)},\ov{e}_{j}]u\big\}.
\end{split}
\]
By the Cauchy-Schwarz inequality,
\begin{equation}\label{4.9}
\begin{split}
& I_{1}+I_{2} \\
\geq {} & 2\textrm{Re}\Bigg(\sum_{j}h_{j}u_{\bar{j}}\Bigg)
-C|\partial u|\sum_{i,j}F^{i\bar{i}}(|e_{i}e_{j}u|+|e_{i}\bar{e}_{j}u|)-C|\partial u|^{2} \mathcal{F}\\
\geq {} & -C|\de u|
-\frac{1}{4}\sum_{j}F^{i\bar{i}}(|e_{i}e_{j}u|^{2}+|e_{i}\bar{e}_{j}u|^{2})-C|\partial u|^{2}\mathcal{F}.
\end{split}
\end{equation}
Then
\[
\begin{split}
L(|\partial u|^{2}) = {} & I_{1}+I_{2}+I_{3} \\[3mm]
\geq {} & \frac{3}{4}\sum_{i,j}F^{i\bar{i}}(|e_{i}e_{j}u|^{2}+|e_{i}\bar{e}_{j}u|^{2})
-C(|\de u|+|\de u|^{2})\mathcal{F} \\
\geq {} & \frac{3}{4}\sum_{i,j}F^{i\bar{i}}(|e_{i}e_{j}u|^{2}+|e_{i}\bar{e}_{j}u|^{2})-CK\mathcal{F}.
\end{split}
\]
\end{proof}

\subsubsection{Proof of Proposition \ref{lower bound of L Q}}
We will use the above computations to prove Proposition \ref{lower bound of L Q}.

\begin{proof}[Proof of Proposition \ref{lower bound of L Q}]
Combining \eqref{L Q}, Lemma \ref{lower bound of L lambda1}, \ref{lower bound of L rho} and \ref{lower bound of L de u}, we obtain
\begin{equation*}
\begin{split}
0 \geq {} & (2-\ve)
\sum_{\alpha>1}\frac{F^{i\bar{i}}|e_{i}(u_{V_{\alpha}V_{1}})|^{2}}{\lambda_{1}(\lambda_{1}-\lambda_{\alpha})}
-\frac{1}{\lambda_{1}}F^{i\bar{k},j\bar{l}}V_{1}(\tilde{g}_{i\bar{k}})V_{1}(\tilde{g}_{j\bar{l}})\\
&+(2-\varepsilon)\xi'\sum_{\alpha,\beta}F^{i\bar{i}}|e_{i}(u_{\alpha\beta})|^{2}
-(1+\ve)\frac{F^{i\bar{i}}|(\lambda_{1})_{i}|^{2}}{\lambda_{1}^{2}} +\xi''F^{i\bar{i}}|e_{i}(|\rho|^{2})|^{2}\\
& +\frac{3\eta'}{4}\sum_{i,j} F^{i\bar{i}}(|e_{i}e_{j}u|^{2}+|e_{i}\bar{e}_{j}u|^{2})
+\eta'' F^{i\bar{i}}|e_{i}(|\partial u|^{2})|^{2}\\
& -Ae^{-Au}L(u)+A^{2}e^{-Au} F^{i\bar{i}}|u_{i}|^{2}
-\frac{C}{\ve}(1+\xi'N^{2}+\eta'K)\mathcal{F}.
\end{split}
\end{equation*}
The first, second and fourth term are $G_{1}$, $G_{2}$ and $B$ respectively. It suffices to deal with the third and last term. For the third term, using \eqref{xieta} and \eqref{N lambda 1},
\[
(2-\varepsilon)\xi'\sum_{\alpha,\beta}F^{i\bar{i}}|e_{i}(u_{\alpha\beta})|^{2}
\geq \sum_{\alpha,\beta}\frac{F^{i\ov{i}}|e_{i}(u_{\alpha\beta})|^{2}}{20N^{2}}
\geq \sum_{\alpha,\beta}\frac{F^{i\ov{i}}|e_{i}(u_{\alpha\beta})|^{2}}{C_{A}\lambda_{1}^{2}}.
\]
For the last term, using \eqref{xieta} again,
\[
-\frac{C}{\ve}(1+\xi'N^{2}+\eta'K)\mathcal{F}
\geq -\frac{C}{\ve}\mathcal{F}.
\]
Combining the above inequalities, we obtain Proposition \ref{lower bound of L Q}.
\end{proof}

\subsection{Proof of Theorem \ref{Thm4.1}}
To prove Theorem \ref{Thm4.1}, we define the index set:
\begin{equation*}
 J:=\Big\{ 1\leq k\leq n :  \frac{\eta'}{2}\sum_{j} (|e_{k}e_{j}u|^{2}+|e_{k}\bar{e}_{j}u|^{2})\geq A^{5n}e^{-5nu}K \ \,  \text{at $x_{0}$}\Big\}.
\end{equation*}
If $J=\emptyset$, then we obtain Theorem \ref{Thm4.1} directly. So we assume $J\neq\emptyset$ and let $j_0$ be the maximal element of $J$. If $j_{0}<n$, then we define another index set:
\begin{equation}\label{def of S}
S:=\Big\{j_{0}\leq i\leq n-1 :
F^{i\bar{i}} \leq A^{-2}e^{2Au}F^{i+1\overline{i+1}} \ \,  \text{at $x_{0}$}\Big\}.
\end{equation}
According to the index sets $J$ and $S$, the proof of Theorem \ref{Thm4.1} can be divided into three cases:

\medskip
{\bf Case 1.} $j_{0}=n$.

\medskip
{\bf Case 2.} $j_{0}<n$ and $S=\emptyset$.

\medskip
{\bf Case 3.} $j_{0}<n$ and $S\neq\emptyset$.
\medskip

Actually, Case 3 is the most difficult case. Case 1 and 2 are relatively easy, and their arguments are very similar.

\subsubsection{Proofs of Case 1 and 2}
In Case 1, we choose $\ve=\frac{1}{3}$. By (\ref{5..10}) and the elementary inequality
\begin{equation}\label{ele}
	|a+b+c|^{2}\leq 3|a|^{2}+3|b|^{2}+3|c|^{2},
\end{equation}
we get
\begin{equation*}
\begin{split}
B = {} & -(1+\ve)\frac{F^{i\bar{i}}|(\lambda_{1})_{i}|^{2}}{\lambda_{1}^{2}}\\
\geq {} & -4KA^{2}e^{-2Au}\mathcal{F}-4(\xi')^{2} F^{i\bar{i}}|e_{i}(|\rho |^{2})|^{2}-4(\eta')^{2} F^{i\bar{i}}|e_{i}(|\partial u|^{2})|^{2}.	
\end{split}
\end{equation*}
Substituting this into (\ref{4.7"}), dropping the non-negative terms $G_{i}$ ($i=1,2,3$) and $A^{2}e^{-Au} F^{i\bar{i}}|u_{i}|^{2}$, and using \eqref{xieta}, we obtain
\begin{equation}\label{(2)3.17}
0 \geq \frac{3\eta'}{4}\sum_{i,j} F^{i\bar{i}}(|e_{i}e_{j}u|^{2}+|e_{i}\bar{e}_{j}u|^{2})
-\left(C+4KA^{2}e^{-2Au}\right)\mathcal{F}-Ae^{-Au}L(u).
\end{equation}
The assumption of Case 1 shows $n=j_{0}\in J$ and so
\begin{equation}\label{case1}
 \begin{split}
& \frac{\eta'}{2}\sum_{i,j} F^{i\bar{i}}(|e_{i}e_{j}u|^{2}+|e_{i}\bar{e}_{j}u|^{2})
\geq \frac{\eta'}{2}F^{n\bar{n}}\sum_{j}(|e_{n}e_{j}u|^{2}+|e_{n}\bar{e}_{j}u|^{2}) \\
\geq {} & A^{5n}e^{-5nu}KF^{n\bar{n}}
\geq \frac{1}{n}A^{5n}e^{-5nu}K\mathcal{F},
\end{split}
\end{equation}
where we used \eqref{F ii} in the last inequality. It then follows that
\[
\begin{split}
0 \geq {} & \frac{\eta'}{4}\sum_{i,j} F^{i\bar{i}}(|e_{i}e_{j}u|^{2}+|e_{i}\bar{e}_{j}u|^{2})-\left(C
+4KA^{2}e^{-2Au}\right)\mathcal{F}\\
&+ \frac{1}{n} A^{5n}e^{-5nu}K\mathcal{F}-Ae^{-Au}L(u).
\end{split}
\]
For the last term, by the Cauchy-Schwarz inequality, we obtain
\begin{equation}\label{lower bound of Leta}
L(u) = \sum_{i}F^{i\bar{i}}(e_{i}\ov{e}_{i}u-[e_{i},\ov{e}_{i}]^{(0,1)}u)
\leq \frac{\eta'}{4}F^{i\bar i}|e_{i}\ov{e}_{i}u|^{2}+CK\mathcal{F}
\end{equation}
and so
\begin{equation}\label{z}
0\geq\frac{1}{n} A^{5n}e^{-5Anu}K\mathcal{F}-(4A^{2}e^{-2Au}+C)K\mathcal{F}.
\end{equation}
Recalling $\sup_{M}u=-1$ and increasing $A$ if necessary, this yields a contradiction.

\bigskip

In Case 2, using $S=\emptyset$, we see that
\begin{equation*}
F^{j_{0}\bar{j_{0}}}\geq A^{-2}e^{2Au(x_{0})}F^{j_{0}+1\overline{j_{0}+1}}\geq \cdots \geq A^{-2n}e^{2nAu(x_{0})}F^{n\bar{n}}.
\end{equation*}
Combining this with \eqref{F ii},
\begin{equation}\label{case2}
\begin{split}
&\frac{\eta'}{2}\sum_{i,j}F^{i\bar{i}} (|e_{i}e_{j}u|^{2}+|e_{i}\bar{e}_{j}u|^{2})
\geq \frac{\eta'}{2}F^{j_{0}\bar{j_{0}}}\sum_{j}(|e_{j_{0}}e_{j}u|^{2}+|e_{j_{0}}\bar{e}_{j}u|^{2})\\
&\geq A^{5n}e^{-5nu}KF^{j_{0}\bar{j_{0}}}\geq A^{3n}e^{-3nu}KF^{n\bar{n}}\geq \frac{1}{n} A^{3n}e^{-3nu}K\mathcal{F}.
	\end{split}
\end{equation}
Replacing \eqref{case1} by \eqref{case2}, and using the similar argument of Case 1, we complete the proof of Case 2.

\subsubsection{Proof of Case 3}
In Case 3, we have $S\neq\emptyset$. Let $i_{0}$ be the minimal element of $S$. Using $i_{0}$, we define another nonempty set
\begin{equation*}
I=\{i_{0}+1,\cdots, n\}.
\end{equation*}
Now we decompose the term $B$ into three terms based on $I$:
\begin{equation}\label{definition Bi}
\begin{split}
B & = (1+\ve)\sum_{i}\frac{F^{i\bar{i}}|(\lambda_{1})_{i}|^{2}}{\lambda_{1}^{2}}\\
& = (1+\ve)\sum_{i\not\in I}\frac{F^{i\bar{i}}|(\lambda_{1})_{i}|^{2}}{\lambda_{1}^{2}}
+3\ve\sum_{i\in I}\frac{F^{i\bar{i}}|(\lambda_{1})_{i}|^{2}}{\lambda_{1}^{2}}+(1-2\ve)\sum_{i\in I}\frac{F^{i\bar{i}}|(\lambda_{1})_{i}|^{2}}{\lambda_{1}^{2}}\\[2mm]
& =: B_{1}+B_{2}+B_{3}.
\end{split}
\end{equation}

\subsubsection*{$\bullet$ Terms $B_{1}$ and $B_{2}$}

We first deal with the terms $B_{1}$ and $B_{2}$.

\begin{lemma}\label{bad terms 1 2}
At $x_{0}$, we have
\begin{equation*}
\begin{split}
B_{1}+B_{2}
\leq {} & \frac{\eta'}{4}\sum_{i,j}F^{i\bar{i}} (|e_{i}e_{j}u|^{2}+|e_{i}\bar{e}_{j}u|^{2})+\xi''F^{i\bar{i}}|e_{i}(|\rho|^{2})|^{2}\\&+\eta''F^{i\bar{i}}|e_{i}(|\partial u|^{2})|^{2}+9\ve A^{2}e^{-2Au}F^{i\bar{i}}|u_{i}|^{2}.
	\end{split}
\end{equation*}
\end{lemma}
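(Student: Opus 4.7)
The starting point is the critical-point identity \eqref{5..10},
\[
\frac{(\lambda_1)_i}{\lambda_1} \;=\; -\xi'e_i(|\rho|^2) \;-\; \eta' e_i(|\partial u|^2) \;+\; Ae^{-Au}u_i,
\]
which I would substitute into every summand of $B_1+B_2$. Applying the elementary inequality $|a+b+c|^2\le 3(|a|^2+|b|^2+|c|^2)$ together with the identities $\xi''=4(\xi')^2$ and $\eta''=4(\eta')^2$ yields the pointwise bound
\[
\frac{F^{i\bar i}|(\lambda_1)_i|^2}{\lambda_1^2}
\;\le\;
\tfrac{3}{4}\,\xi''\,F^{i\bar i}|e_i(|\rho|^2)|^2
\;+\;\tfrac{3}{4}\,\eta''\,F^{i\bar i}|e_i(|\partial u|^2)|^2
\;+\;3A^2e^{-2Au}\,F^{i\bar i}|u_i|^2.
\]

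For $B_2=3\varepsilon\sum_{i\in I}(\,\cdot\,)$ this immediately produces coefficients $\tfrac{9\varepsilon}{4}$ on each of the two second-order terms and $9\varepsilon$ on the $A^2e^{-2Au}|u_i|^2$ term. Since $\varepsilon\le \tfrac{1}{3}$, the first two fit inside the target $\xi''$ and $\eta''$ terms, while the last matches exactly. For $B_1=(1+\varepsilon)\sum_{i\notin I}(\,\cdot\,)$ the two second-order coefficients become $\tfrac{3(1+\varepsilon)}{4}\le 1$, again within the target, but the coefficient $3(1+\varepsilon)$ on $\sum_{i\notin I}A^2e^{-2Au}F^{i\bar i}|u_i|^2$ exceeds the allowance $9\varepsilon$.

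The core of the proof is then to absorb the excess $(3-6\varepsilon)A^2e^{-2Au}\sum_{i\notin I}F^{i\bar i}|u_i|^2$ into the $\tfrac{\eta'}{4}\sum_{i,j}F^{i\bar i}(|e_ie_ju|^2+|e_i\bar e_ju|^2)$ term, and this is precisely what the combined definitions of $I$ and $J$ are designed for. Concretely I would chain the following three ingredients:
\begin{enumerate}
\item[(i)] $|u_i|^2 \le |\partial u|^2 \le K$;
\item[(ii)] for $i\notin I$, i.e.\ $i\le i_0$, the monotonicity \eqref{F ii} together with the defining inequality $F^{i_0\bar{i_0}}\le A^{-2}e^{2Au}F^{i_0+1\,\overline{i_0+1}}$ of $i_0\in S$ (see \eqref{def of S}) gives
\[
A^2e^{-2Au}F^{i\bar i} \;\le\; F^{i_0+1\,\overline{i_0+1}};
\]
\item[(iii)] the defining property of $j_0\in J$ supplies the reverse estimate
\[
K \;\le\; \frac{\eta'\,e^{5nu}}{2A^{5n}}\sum_{j}\!\bigl(|e_{j_0}e_ju|^2+|e_{j_0}\bar e_ju|^2\bigr).
\]
\end{enumerate}
Combining (i)--(iii), the excess is dominated by a multiple of the $i=j_0$ summand of $\tfrac{\eta'}{4}\sum_{i,j}F^{i\bar i}(|e_ie_ju|^2+|e_i\bar e_ju|^2)$, and absorption follows provided $A$ has been fixed large enough.

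\textbf{Main obstacle.} The delicate point is the final numerical closure: after applying (i)--(iii) one arrives at a condition of the shape
\[
\frac{F^{i_0+1\,\overline{i_0+1}}}{F^{j_0\bar{j_0}}} \;\le\; \frac{A^{5n}e^{-5nu}}{C(n,\varepsilon)},
\]
and one must verify that the large factor $A^{5n}$ built into the definition of $J$ genuinely dominates the growth of $F^{i\bar i}$ along the chain from $j_0$ to $i_0+1$. This is where the calibration $A^{5n}$ (as opposed to, say, $A^2$) in the $J$-condition is essential; together with the jump $F^{i_0+1\,\overline{i_0+1}}/F^{i_0\bar{i_0}}\ge A^2e^{-2Au}$ built into the $S$-condition and the fact that the chain length is at most $n$, one can close the bookkeeping by taking $A$ sufficiently large depending only on $n$ and $\sup_M u$. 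Carrying out this count of powers of $A$ cleanly, while keeping the constants independent of $\sup_M|\partial u|^2$ (unlike in \cite{CM21}), is the technical heart of the lemma.
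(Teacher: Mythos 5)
Your initial decomposition matches the paper's: apply the critical-point identity \eqref{5..10}, expand with $|a+b+c|^{2}\le 3(|a|^{2}+|b|^{2}+|c|^{2})$, and use $\xi''=4(\xi')^{2}$, $\eta''=4(\eta')^{2}$ to fit the $\xi''$, $\eta''$ pieces, isolating a surplus coming from the $A^{2}e^{-2Au}|u_{i}|^{2}$ contribution of $B_{1}$. Steps (i) and (iii) are also fine: $|u_{i}|^{2}\le K$ and $j_{0}\in J$ giving $K\le \frac{\eta' e^{5nAu}}{2A^{5n}}\sum_{j}(|e_{j_{0}}e_{j}u|^{2}+|e_{j_{0}}\bar e_{j}u|^{2})$.

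The gap is in step (ii). By chaining $F^{i\bar i}\le F^{i_{0}\bar i_{0}}\le A^{-2}e^{2Au}F^{i_{0}+1\,\overline{i_{0}+1}}$ you replace the excess by a multiple of $F^{i_{0}+1\,\overline{i_{0}+1}}$, and the condition you then need to close the argument is an \emph{upper} bound on $F^{i_{0}+1\,\overline{i_{0}+1}}/F^{j_{0}\bar j_{0}}$. But the defining inequality of $i_{0}\in S$, namely $F^{i_{0}\bar i_{0}}\le A^{-2}e^{2Au}F^{i_{0}+1\,\overline{i_{0}+1}}$, only gives a \emph{lower} bound $F^{i_{0}+1\,\overline{i_{0}+1}}/F^{i_{0}\bar i_{0}}\ge A^{2}e^{-2Au}$; it places no ceiling on how large $F^{i_{0}+1\,\overline{i_{0}+1}}$ can be relative to $F^{i_{0}\bar i_{0}}$ (or to $F^{j_{0}\bar j_{0}}$). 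So the condition $\frac{F^{i_{0}+1\,\overline{i_{0}+1}}}{F^{j_{0}\bar j_{0}}}\le \frac{A^{5n}e^{-5nAu}}{C(n,\varepsilon)}$ that your bookkeeping requires simply cannot be verified — the quantity on the left is unbounded above. The invocation of the jump ``$F^{i_{0}+1\,\overline{i_{0}+1}}/F^{i_{0}\bar i_{0}}\ge A^{2}e^{-2Au}$'' in your closing paragraph has the inequality in the wrong direction for this purpose.

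The correct absorption stops at $F^{i_{0}\bar i_{0}}$: bound the surplus by $C\,n\,A^{2}e^{-2Au}K\,F^{i_{0}\bar i_{0}}$ (keeping the explicit $A^{2}e^{-2Au}$ factor rather than trading it for $F^{i_{0}+1\,\overline{i_{0}+1}}$), and then use that $j_{0},\dots,i_{0}-1\notin S$ to chain \emph{upward} $F^{i_{0}\bar i_{0}}<A^{2n}e^{-2nAu}F^{j_{0}\bar j_{0}}$. This converts the surplus into $\le C\,n\,A^{2n+2}e^{-(2n+2)Au}K\,F^{j_{0}\bar j_{0}}$, which is dominated by $\tfrac12 A^{5n}e^{-5nAu}K\,F^{j_{0}\bar j_{0}}$ (from $j_{0}\in J$) once $A$ is large, since $5n>2n+2$. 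In short: the chain between $j_{0}$ and $i_{0}$ (outside $S$) is what yields an upper bound on a $F$-ratio; the single step from $i_{0}$ to $i_{0}+1$ (inside $S$) yields a lower bound, and using it in your step (ii) goes the wrong way.
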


\begin{proof}
Using \eqref{5..10}, the elementary inequality \eqref{ele} and $\ve\in(0,\frac{1}{3}]$, we obtain
\begin{equation*}
\begin{split}
B_{1}&=(1+\ve)\sum_{i\not\in I}F^{i\bar{i}}\left|-\xi'e_{i}(|\rho|^{2})-\eta'e_{i}(|\partial u|^{2})+Ae^{-Au}u_{i}\right|^{2}\\
	&\leq 4(\xi')^{2}\sum_{i\not\in I}F^{i\bar{i}}|e_{i}(|\rho|^{2})|^{2}+ 4(\eta')^{2}\sum_{i\not\in I}F^{i\bar{i}}|e_{i}(|\partial u|^{2})|^{2}+4A^{2}e^{-2Au}\sum_{i\not\in I}F^{i\bar{i}}|u_{i}|^{2}\\
	&\leq \xi''\sum_{i\not\in I}F^{i\bar{i}}|e_{i}(|\rho|^{2})|^{2}+ \eta''\sum_{i\not\in I}F^{i\bar{i}}|e_{i}(|\partial u|^{2})|^{2}+4A^{2}e^{-2Au}K\sum_{i\not\in I}F^{i\bar{i}},
\end{split}
\end{equation*}
where we used \eqref{xieta} in the last line. Similarly,
\begin{equation*}
	\begin{split}
		B_{2}&=3\ve \sum_{i\in I}F^{i\bar{i}}\left|-\xi'e_{i}(|\rho|^{2})-\eta'e_{i}(|\partial u|^{2})+Ae^{-Au}u_{i}\right|^{2}\\
		&\leq 9\ve (\xi')^{2}\sum_{i\in I}F^{i\bar{i}}|e_{i}(|\rho|^{2})|^{2}+ 9\ve (\eta')^{2}\sum_{i\in I}F^{i\bar{i}}|e_{i}(|\de u|^{2})|^{2}+9\ve A^{2}e^{-2Au}\sum_{i\in I}F^{i\bar{i}}|u_{i}|^{2}\\
		&\leq \xi''\sum_{i\in I}F^{i\bar{i}}|e_{i}(|\rho|^{2})|^{2}+ \eta''\sum_{i\in I}F^{i\bar{i}}|e_{i}(|\de u|^{2})|^{2}+9\ve A^{2}e^{-2Au}F^{i\bar{i}}|u_{i}|^{2}.
	\end{split}
\end{equation*}
It then follows that
\begin{equation*}
\begin{split}
B_{1}+B_{2}
\leq {} & 4A^{2}e^{-2Au}K\sum_{i\not\in I}F^{i\bar{i}}+\xi''F^{i\bar{i}}|e_{i}(|\rho|^{2})|^{2}\\&+\eta''F^{i\bar{i}}|e_{i}(|\partial u|^{2})|^{2}
+9\ve A^{2}e^{-2Au}F^{i\bar{i}}|u_{i}|^{2}.
\end{split}
\end{equation*}
Since $I=\{i_{0}+1,\cdots, n\}$, then for each $i\not\in I$, we have $i\leq i_{0}$ and $F^{i\bar{i}}\leq F^{i_{0}\bar{i_{0}}}$ (see \eqref{F ii}). This shows
\begin{equation}\label{bad terms 1 2 eqn 1}
\begin{split}
B_{1}+B_{2}
\leq {} & 4nA^{2}e^{-2Au}KF^{i_{0}\bar{i_{0}}}
+\xi''F^{i\bar{i}}|e_{i}(|\rho|^{2})|^{2}\\
&+\eta''F^{i\bar{i}}|e_{i}(|\partial u|^{2})|^{2}+9\ve A^{2}e^{-2Au}F^{i\bar{i}}|u_{i}|^{2}.
\end{split}
\end{equation}

On the other hand, we assert that
\begin{equation}\label{bad terms 1 2 eqn 2}
F^{j_{0}\bar{j_{0}}} > A^{-2n}e^{2nAu} F^{i_{0}\bar{i_{0}}}.
\end{equation}
The definitions of $i_{0}$ and $j_{0}$ show $j_{0}\leq i_{0}$. If $j_{0}=i_{0}$, the above is trivial. If $j_{0}\leq i_{0}-1$, since $i_{0}$ is the minimal element of $S$, then we obtain $j_{0},\ldots,i_{0}-1\not\in S$ and
\begin{equation*}
F^{j_{0}\bar{j_{0}}}
> A^{-2}e^{2Au}F^{j_{0}+1\overline{j_{0}+1}}
> \cdots
> A^{-2(i_{0}-j_{0})}e^{2(i_{0}-j_{0})Au}F^{i_{0}\bar{i_{0}}}
\geq A^{-2n}e^{2nAu}F^{i_{0}\bar{i_{0}}}.
\end{equation*}
Using $j_{0}\in J$ and \eqref{bad terms 1 2 eqn 2},
\begin{equation}\label{bad terms 1 2 eqn 3}
\begin{split}
& \frac{\eta'}{4}\sum_{i,j}F^{i\bar{i}} (|e_{i}e_{j}u|^{2}+|e_{i}\bar{e}_{j}u|^{2})	
\geq \frac{\eta'}{4}F^{j_{0}\bar{j_{0}}}\sum_{j}(|e_{j_{0}}e_{j}u|^{2}+|e_{j_{0}}\bar{e}_{j}u|^{2})	\\
\geq {} & \frac{1}{2}A^{5n}e^{-5nAu}K F^{j_{0}\bar{j_{0}}}
\geq \frac{1}{2}A^{3n}e^{-3nAu}KF^{i_{0}\bar{i_{0}}}.
\end{split}	
\end{equation}
Combining \eqref{bad terms 1 2 eqn 1} and \eqref{bad terms 1 2 eqn 3}, and increasing $A$ if necessary, we are done.
\end{proof}

\subsubsection*{$\bullet$ Term $B_{3}$}
We will use $G_{1}$, $G_{2}$ and $G_{3}$ to control the term $B_{3}$.
\begin{lemma}\label{B 3}
If $ \ve=\frac{e^{Au(x_{0})}}{9}$, then at $x_{0}$, we have
\[
B_{3} \le G_{1}+G_{2}+G_{3}+\frac{C}{\ve}\mathcal{F}.
\]
\end{lemma}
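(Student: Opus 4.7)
The goal is to dominate
\[
B_3 = (1-2\ve)\sum_{i\in I}\frac{F^{i\bar i}|(\lambda_1)_i|^2}{\lambda_1^2}
\]
by $G_1+G_2+G_3+\tfrac{C}{\ve}\mathcal F$, with the precise choice $\ve = e^{Au(x_0)}/9$. My plan is to express $(\lambda_1)_i = V_1^\alpha V_1^\beta e_i(u_{\alpha\beta})$ (at $x_0$, after freezing $V_1$ as a constant vector field) in a form compatible with $G_1$ and $G_2$. Decomposing $V_1 = V_1^{1,0}+V_1^{0,1}$ with $V_1^{1,0}=\sum_k \nu_k e_k$, and shuffling $e_i$ past $V_1$ by the same commutator manipulations as in Claim \ref{claim 1}, I will convert $e_i(u_{V_1 V_1})$ into a linear combination of $V_1(\tilde g_{k\bar l})$ (from the $(1,1)$-part of $V_1V_1$) and $e_i(u_{V_1 V_\alpha})$ for $\alpha>1$ (from the pure $(2,0)$ and $(0,2)$ parts), modulo $O(\lambda_1)$ torsion remainders arising from the non-integrability of $J$.

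Next I apply Cauchy--Schwarz with a parameter $\sigma$ to split $|(\lambda_1)_i|^2$ into a $G_2$-compatible piece of the form $|\sum_{k,l}\nu_k\bar\nu_l V_1(\tilde g_{k\bar l})|^2$ and a $G_1$-compatible piece $\sum_{\alpha>1}|e_i(u_{V_1 V_\alpha})|^2$. Summing the first against $F^{i\bar i}/\lambda_1^2$ and invoking the concavity of $f$: since $-F^{i\bar k, j\bar l}$ is a nonnegative quadratic form on Hermitian tensors, matching it against the rank-one Hermitian tensor $\nu_k\bar\nu_l$ yields exactly $\lambda_1 G_2$ after division by $\lambda_1$. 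The second piece matches $G_1$ directly once the weight $1/(\lambda_1-\lambda_\alpha)$ is extracted. The remaining $O(\lambda_1)$ terms, multiplied by $F^{i\bar i}/\lambda_1^2$, are absorbable by $G_3$ (after fixing $C_A$ large and using \eqref{N lambda 1}) together with a bounded multiple of $\mathcal F$.

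The specific value $\ve = e^{Au(x_0)}/9$ will be forced by matching constants: the coefficient $(1+\sigma)$ on the $G_2$-piece must satisfy $(1-2\ve)(1+\sigma)\leq 1$, the coefficient $(1+1/\sigma)$ on the $G_1$-piece must satisfy $(1-2\ve)(1+1/\sigma)/(\lambda_1-\lambda_\alpha)\leq (2-\ve)/(\lambda_1-\lambda_\alpha)$, and together these force $\sigma$ in terms of $\ve$ so that the $1/\sigma$-weighted error carries an $e^{-Au(x_0)}$ factor that becomes the $\tfrac{C}{\ve}$ weight on $\mathcal F$. The main obstacle will be the first step: in the almost Hermitian setting the commutator shuffle produces numerous torsion contributions from $[e_i,V_1]$, $[V_1,\bar e_i]$, and $[e_i,\bar e_i]^{(0,1)}$, and careful bookkeeping is required to verify that each is bounded in absolute value by $C\lambda_1$ rather than $C\lambda_1^2$, so that after division by $\lambda_1^2$ they fit inside $G_3 + \tfrac{C}{\ve}\mathcal F$.
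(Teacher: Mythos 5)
Your high-level plan---decompose $V_1$ via its $(1,0)$-part $W_1=\sum_q\nu_q e_q$ and $JV_1=\sum_{\alpha>1}\varsigma_\alpha V_\alpha$, rewrite $e_i(u_{V_1V_1})$ in terms of $V_1(\tilde g_{i\bar q})$ and $e_i(u_{V_1 V_\alpha})$ plus $O(\lambda_1)$ errors, then Cauchy--Schwarz against $G_1$ and $G_2$---has the right shape, but it omits the mechanism that makes the $G_2$-comparison possible. You cannot simply ``match the concavity form against the rank-one tensor $\nu_k\bar\nu_l$'': the quantity $-F^{i\bar k,j\bar l}V_1(\tilde g_{i\bar k})V_1(\tilde g_{j\bar l})$ controls the off-diagonal data $|V_1(\tilde g_{i\bar q})|^2$ only through the weights $(F^{i\bar i}-F^{q\bar q})/(\tilde g_{q\bar q}-\tilde g_{i\bar i})$ coming from \eqref{second derive of F}, and these weights are usable only when $F^{i\bar i}$ and $F^{q\bar q}$ are separated. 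That separation is precisely what the index sets $S$ and $I$ of \eqref{def of S} provide: for $i\in I$, $q\notin I$ one has $F^{q\bar q}\le A^{-2}e^{2Au}F^{i\bar i}$. Correspondingly you must split $\sum_q\bar\nu_q V_1 e_i\bar e_q u$ into $q\notin I$ (the $G_2$-bucket, via the weighted Cauchy--Schwarz above) and $q\in I$; the latter cannot be handled the same way, and the proof uses Lemma~\ref{nu}(2), namely $|\nu_q|\le C_AK/\lambda_1$ for $q\in I$, to push that piece into $G_3+\mathcal F$. Your proposal never introduces $I$ or invokes Lemma~\ref{nu}, so the $G_2$ step and the treatment of the remaining $(1,0)$-components are both unjustified.

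Your picture of the constant matching is likewise too optimistic. After the two Cauchy--Schwarz steps the coefficient of $G_1$ naturally carries the factor $\bigl(1+\tfrac{1}{\gamma}\bigr)\bigl(\lambda_1-\sum_{\alpha>1}\lambda_\alpha\varsigma_\alpha^2\bigr)/\lambda_1$, which can approach $2\bigl(1+\tfrac{1}{\gamma}\bigr)$, while the coefficient of $G_2$ carries $(1+\gamma)\bigl(\sum_{q\notin I}\tilde g_{q\bar q}|\nu_q|^2+C_AK\bigr)/\lambda_1$; a single fixed $\gamma=\sigma$ cannot force both below $1$. The paper's resolution is the identity $\tilde g(W_1,\ov{W_1})=\tfrac12\bigl(\lambda_1+\sum_{\alpha>1}\lambda_\alpha\varsigma_\alpha^2\bigr)+O(K)$ (see \eqref{case b inequality 2}), which ties the two factors together, followed by a dichotomy on whether $\sum_{q\notin I}\tilde g_{q\bar q}|\nu_q|^2+C_AK$ is small or large compared with $\tfrac12\bigl(\lambda_1+\sum\lambda_\alpha\varsigma_\alpha^2\bigr)$, with a different $\gamma$ in each case (Cases (i) and (ii)). This case split, together with the role of $I$ and Lemma~\ref{nu}, is the actual content of the lemma and is absent from your proposal.
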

We first define
\begin{equation}\label{definition of W}
W_{1} = \frac{1}{\sqrt{2}}(V_{1}-\sqrt{-1}JV_{1}), \quad
W_{1} = \sum_{q}\nu_{q}e_{q}, \quad
JV_{1} = \sum_{\alpha>1}\varsigma_{\alpha}V_{\alpha},
\end{equation}
where we used $V_1$ is orthogonal to $JV_1$. At $x_{0}$, since $V_{1}$ and $e_{q}$ are $\chi$-unit, then
\[
\sum_{q}|\nu_{q}|^{2} = 1, \quad
\sum_{\alpha>1}\varsigma_{\alpha}^{2} = 1.
\]

\begin{lemma}\label{nu}
At $x_{0}$, we have
\begin{enumerate}\setlength{\itemsep}{1mm}
	\item $\omega_{u}\geq -C_{A}K\chi$,
	\item $|\nu_{i}|\leq \frac{C_{A}K}{\lambda_{1}}$, for $i\in I$.
\end{enumerate}
\end{lemma}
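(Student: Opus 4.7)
The plan is to prove (1) and (2) in turn, both at $x_0$ where $\tilde{g}_{i\bar{j}}(x_0) = \delta_{ij}\mu_i$. Throughout we may assume without loss of generality that $\lambda_1(x_0) \geq C_A K$, for otherwise the estimate \eqref{aa} already follows from \eqref{N lambda 1}.

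For part (1), the goal is $\mu_n(x_0) \geq -C_A K$. My approach is to apply Proposition \ref{prop subsolution}(2) at $x_0$ with the Hermitian matrix $A = \tilde{g}(x_0)$ (whose eigenvalues lie on $\partial\Gamma^{h(x_0)}$) and $B = g(x_0)$; recall the normalization $\underline{u} = 0$, so $g = \omega_{\underline{u}}$, and the $\mathcal{C}$-subsolution hypothesis supplies the required $\delta, R$ via \eqref{2..12}. In the first case of the dichotomy one obtains an inequality on the weighted combination $\sum_i F^{i\bar{i}}(g_{i\bar{i}} - \mu_i) > \theta \mathcal{F}$; in the second case, $F^{i\bar{i}} \geq \theta\mathcal{F}/n$ for every $i$, so that $f_n$ is uniformly comparable to $\mathcal{F}$. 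Coupling either conclusion with the concavity inequality $\sum_i f_i\mu_i \leq h - \lim_{\mu \to 0}f(\mu) \leq C$ and the cone constraint $\sum_i \mu_i > 0$ (consequence of $\Gamma \subset \Gamma_1$, as in \eqref{Delta u lower bound}), one isolates the negative part of $\mu_n$. The crucial sharpening from a naive bound $\mu_n \geq -N \geq -C_A\lambda_1$ to the claimed $\mu_n \geq -C_A K$ uses the exponential barrier in $Q$: the first-order identity \eqref{5..10} at the maximum, coupled with the $F^{i\bar{i}}$-weighted term $A^2 e^{-2Au}F^{i\bar{i}}|u_i|^2$ in \eqref{L Q}, allows one (via Cauchy--Schwarz) to absorb the gradient contributions into $|u_i|^2 \leq K$-type bounds rather than $\lambda_1$-type bounds.

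For part (2), I would decompose $V_1 = (W_1 + \bar{W}_1)/\sqrt{2}$ and expand
\[
\lambda_1 = V_1 V_1 u = \mathrm{Re}(W_1 W_1 u) + \mathrm{Re}(W_1 \bar{W}_1 u) + O(\sqrt{K}),
\]
where the connection terms $(\nabla_{V_1}V_1)u$ and the Lie brackets are absorbed into $O(\sqrt{K})$ via $|\partial u|^2 \leq K$. Substituting $W_1 = \sum_q \nu_q e_q$ and the diagonal identity $e_p\bar{e}_q u(x_0) = \delta_{pq}\mu_p - g_{p\bar{q}} + [e_p,\bar{e}_q]^{(0,1)}u$ transforms the cross term into $\sum_p |\nu_p|^2 \mu_p + O(\sqrt{K}) + O(1)$. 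By part (1), each $\mu_p + C_A K \geq 0$; and for $p \in I = \{i_0+1,\ldots,n\}$, the definition of $S$ (and $i_0 \notin S - 1$) forces $F^{p\bar{p}}$ to be larger than $F^{i_0\bar{i_0}}$ by a factor $A^{2(p-i_0)}e^{-2(p-i_0)Au}$, which through the concavity-derived bound $f_p \mu_p \leq C$ yields an upper bound on $\mu_p$ depending on $A$ but not on $\lambda_1$. Since $\sum_p|\nu_p|^2(\mu_p + C_A K)$ must reach order $\lambda_1$ to recover the left-hand side, and since for $p \in I$ the factor $\mu_p + C_A K$ is at most $O(C_A K)$, isolating these terms forces $|\nu_p|^2 \leq C_A K /\lambda_1$. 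The residual piece $\mathrm{Re}(W_1 W_1 u)$ is handled via the same commutator rearrangement used in Claim 1 of the proof of Lemma \ref{lower bound of L lambda1}, which expresses $W_1 W_1 (\tilde{g}_{i\bar{i}})$ through the linearised equation and reduces it to $O(1)$ after applying the equation.

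The main obstacle is the quadratic-in-$K$ bound in part (1); replacing $-C_A\lambda_1$ by $-C_A K$ is precisely what the exponential factor $e^{-Au}$ in $Q$ was introduced to enable, and the careful bookkeeping of the first-order gradient contributions through the $\mathcal{C}$-subsolution dichotomy is where the technical work concentrates. Part (2) is then essentially a bookkeeping consequence of part (1) together with the structure of the index set $I$.
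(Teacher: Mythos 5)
The central observation the paper uses is one your proposal never invokes: since $j_0$ is the maximal element of $J$ and $I=\{i_0+1,\dots,n\}$ with $i_0\geq j_0$, one has $I\cap J=\emptyset$, so for every $i\in I$ (in particular $i=n$) the defining inequality of $J$ \emph{fails}, which together with $\eta'\geq\frac{1}{8K}$ immediately gives the pointwise bound
\[
\sum_{j}\bigl(|e_{i}e_{j}u|^{2}+|e_{i}\bar{e}_{j}u|^{2}\bigr)\leq C_{A}K^{2},\qquad i\in I.
\]
This single inequality drives both parts. For (1), taking $i=n$ gives $|e_{n}\bar{e}_{n}u|\leq C_{A}K$, hence $\tilde{g}_{n\bar{n}}\geq -C_{A}K$, and since $\tilde g_{n\bar n}$ is the smallest eigenvalue the conclusion follows. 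For (2), translating to the real frame $\{\partial_\alpha\}$ bounds all entries $u_{\gamma\beta}$ with $2i_0+1\leq\gamma\leq 2n$ by $C_{A}K$; the eigenvector equation $\Phi V_1=\lambda_1 V_1$ then forces $|V_1^\gamma|\leq C_{A}K/\lambda_1$ in those components, which is exactly $|\nu_i|\leq C_{A}K/\lambda_1$ for $i\in I$.

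Your proposed route for (1) — invoking Proposition \ref{prop subsolution}(2), the concavity bound $\sum_i f_i\mu_i\leq C$, and the exponential barrier — does not produce the stated estimate. The dichotomy only constrains weighted combinations $\sum_i F^{i\bar{i}}(\cdot)$ and gives no lower bound on a single eigenvalue $\mu_n$; nothing in that line of argument converts a weighted/global statement into the pointwise bound $\mu_n\geq -C_A K$. The claim that the $A^2e^{-2Au}F^{i\bar{i}}|u_i|^2$ term lets one "absorb gradient contributions" is vague and does not address the eigenvalue of $\omega_u$. Your route for (2) has a genuine logical gap: from the expansion $\lambda_1\approx\sum_p|\nu_p|^2\mu_p+\dots$ and the per-index upper bound $\mu_p+C_AK\leq C_AK$ for $p\in I$, you cannot extract individual bounds $|\nu_p|^2\leq C_AK/\lambda_1$. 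The sum-level inequality is equally consistent with $\sum_{p\in I}|\nu_p|^2$ of order one and all the mass carried by $p\notin I$; isolating a single $|\nu_p|$ requires the matrix-level information $\Phi V_1=\lambda_1 V_1$ together with the row bound $|\Phi^\gamma_\beta|\leq C_AK$ that the definition of $J$ supplies, which your proposal does not use. In short, both parts of your argument miss the key structural fact $I\cap J=\emptyset$, and without it neither part closes.
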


\begin{proof}
For (1), recall that the maximal element of $J$ is $j_{0}$, and the minimal element of $I$ is $i_{0}+1$  satisfying $i_{0}+1>j_{0}$. This shows $I\cap J=\emptyset$, i.e. for any $i\in I$, we have $i\not\in J$. Thus,
\begin{equation}\label{nu eqn 1}
\frac{\eta'}{4}\sum_{j} (|e_{i}e_{j}u|^{2}+|e_{i}\bar{e}_{j}u|^{2}) \leq \frac{1}{2}A^{5n}e^{-5Anu}K,
\quad \text{for $i\in I$}.
\end{equation}
Since $n\in I$, then \eqref{nu eqn 1} implies $e_{n}\ov{e}_{n}u\geq-C_{A}K$ and so
\[
\ti{g}_{n\ov{n}} = g_{n\ov{n}}+e_{n}\ov{e}_{n}u-[e_{n},\ov{e}_{n}]^{(0,1)}u
\geq e_{n}\ov{e}_{n}u-CK \geq -C_{A}K.
\]
Combining this with \eqref{5..6}, we obtain $\omega_{u}\geq -C_{A}K\chi$.

For (2), use \eqref{nu eqn 1} again, we see that
\[
\sum_{\gamma=2i_{0}+1}^{2n}\sum_{\beta=1}^{2n}|u_{\gamma\beta}|\leq C_{A} K.
\]
It then follows that
\[
|\Phi_{\beta}^{\gamma}| = |u_{\beta\gamma}| \leq C_{A}K, \quad \text{for $2i_{0}+1\leq\gamma\leq 2n$, $1\leq \beta\leq 2n$}.
\]
Recalling $\Phi(V_{1})=\lambda_{1}V_{1}$, we obtain
\[
|V_{1}^{\gamma}|
= \Bigg|\frac{1}{\lambda_{1}}\sum_{\beta}\Phi_{\beta}^{\gamma}V_{1}^{\beta}\Bigg|
\leq \frac{C_{A}K}{\lambda_{1}}, \quad \text{for $2i_{0}+1\leq\gamma\leq 2n$},
\]
which implies
\[
|\nu_{i}|\leq |V_{1}^{2i-1}|+|V^{2i}_{1}|\leq \frac{C_{A}K}{\lambda_{1}}, \quad
\text{for $i\in I$}.
\]
\end{proof}

The definition of $W_{1}$ \eqref{definition of W} gives $V_{1}=-\sqrt{-1}JV_{1}+\sqrt{2}\,\ov{W_{1}}$. Then direct calculation shows
\[
\begin{split}
e_{i}(u_{V_1V_1})
= {} & -\sqrt{-1}e_{i}(u_{V_{1}JV_{1}})+\sqrt{2}e_{i}(u_{V_{1}\ov{W_{1}}}) \\[3mm]
= {} & -\sqrt{-1}\sum_{\alpha>1}\varsigma_{\alpha} e_{i}(u_{V_1 V_\alpha})
+\sqrt{2}\sum_{q}\ov{\nu_q}V_{1}e_{i}\ov{e}_{q}u+O(\lambda_1) \\
= {} & -\sqrt{-1}\sum_{\alpha>1}\varsigma_{\alpha} e_{i}(u_{V_1 V_\alpha})
+\sqrt{2}\sum_{q\notin I}\ov{\nu_q}V_1(\tilde{g}_{i\ov{q}})
+\sqrt{2}\sum_{q\in I}\ov{\nu_q}V_{1}e_{i}\ov{e}_{q}u+O(\lambda_1).
\end{split}
\]
Combining this with the Cauchy-Schwarz inequality and Lemma \ref{nu},
\[
\begin{split}
B_{3} = {} & (1-2\ve)\sum_{i\in I}\frac{F^{i\ov{i}}|e_{i}(u_{V_{1}V_{1}})|^2}{\lambda_1^2} \\
\leq {} & (1-\ve)\sum_{i\in I}\frac{F^{i\ov{i}}}{\lambda_1^2}
\left|-\sqrt{-1}\sum_{\alpha>1}\varsigma_{\alpha} e_{i}(u_{V_1 V_\alpha})+\sqrt{2}\sum_{q\notin I}\ov{\nu_q}V_{1}(\tilde{g}_{i\ov{q}})\right|^{2} \\
& + \frac{C_{A}K^{2}}{\ve\lambda_1^2}\sum_{i\in I}\sum_{q\in I}\frac{F^{i\ov{i}}|V_{1}e_{i}\ov{e}_{q}u|^{2}}{\lambda_1^2}+\frac{C\mathcal{F}}{\ve}.
\end{split}
\]
For any $\gamma>0$, using the Cauchy-Schwarz inequality again,
\[
\begin{split}
B_{3} \leq {} & (1-\ve)\left(1+\frac{1}{\gamma}\right)\sum_{i\in I}\frac{F^{i\ov{i}}}{\lambda_1^2}
\left|\sum_{\alpha>1}\varsigma_{\alpha} e_{i}(u_{V_1 V_\alpha})\right|^{2} \\
& +(1-\ve)(1+\gamma)\sum_{i\in I}\frac{2F^{i\ov{i}}}{\lambda_{1}^{2}}\left|\sum_{q\notin I}\ov{\nu_q}V_1(\tilde{g}_{i\ov{q}})\right|^{2} \\
& + \frac{C_A K^{2}}{\ve\lambda_1^2}\sum_{i\in I}\sum_{q\in I}\frac{F^{i\ov{i}}|V_{1}e_{i}\ov{e}_{q}u|^{2}}{\lambda_1^2}+\frac{C\mathcal{F}}{\ve} \\
=: {} & B_{31}+B_{32}+B_{33}+\frac{C\mathcal{F}}{\ve}.
\end{split}
\]

Now we give the proof of Lemma \ref{B 3}.
\begin{proof}[Proof of Lemma \ref{B 3}]
We first deal with $B_{33}$. It is clear that
\[
|V_{1}e_{i}\ov{e}_{q}u| \leq C\sum_{\alpha,\beta}|e_{i}(u_{\alpha\beta})|+C\lambda_{1}
\]
and so
\[
B_{33} = \frac{C_A K^{2}}{\ve\lambda_1^2}\sum_{i\in I}\sum_{q\in I}\frac{F^{i\ov{i}}|V_{1}e_{i}\ov{e}_{q}u|^{2}}{\lambda_1^2}
\leq \frac{C_A K^{2}}{\ve\lambda_1^2}\sum_{\alpha,\beta}\frac{F^{i\ov{i}}|e_{i}(u_{\alpha\beta})|^{2}}{\lambda_{1}^{2}}
+\frac{C_A K^{2}}{\ve\lambda_1^2}\mathcal{F}.
\]
Without loss of generality, we assume that $\lambda_{1}\geq\frac{C_{A}K}{\ve}$, which implies
\[
B_{33} \leq
\sum_{\alpha,\beta}\frac{F^{i\ov{i}}|e_{i}(u_{\alpha\beta})|^{2}}{C_{A}\lambda_{1}^{2}}+\mathcal{F}
= G_{3}+\mathcal{F}.
\]
To prove Lemma \ref{B 3}, it suffices to show
\begin{equation}\label{B 3 claim}
B_{31}+B_{32} \leq G_{1}+G_{2}.
\end{equation}
For the term $B_{31}$, by the Cauchy-Schwarz inequality,
\begin{equation}\label{B31}
\begin{split}
B_{31} \leq {} & (1-\ve)\left(1+\frac{1}{\gamma}\right)
\sum_{i\in I}\frac{F^{i\ov{i}}}{\lambda_1^2}\left(\sum_{\alpha>1}(\lambda_{1}-\lambda_{\alpha})\varsigma_{\alpha}^{2}\right)
\left(\sum_{\alpha>1}\frac{|e_{i}(u_{V_{1}V_{\alpha}})|^{2}}{\lambda_1-\lambda_\alpha}\right) \\
= {} & (1-\ve)\left(1+\frac{1}{\gamma}\right)\sum_{i\in I}\frac{F^{i\ov{i}}}{\lambda_1^2}\left(\lambda_{1}-\sum_{\alpha>1}\lambda_{\alpha}\varsigma_{\alpha}^{2}\right)
\left(\sum_{\alpha>1}\frac{|e_{i}(u_{V_{1}V_{\alpha}})|^{2}}{\lambda_1-\lambda_\alpha}\right) \\
\leq {} & \frac{1-\ve}{(2-\ve)\lambda_{1}}
\left(1+\frac{1}{\gamma}\right)\left(\lambda_{1}-\sum_{\alpha>1}\lambda_{\alpha}\varsigma_{\alpha}^{2}\right)G_{1}.
\end{split}
\end{equation}
For the term $B_{32}$, using the Cauchy-Schwarz inequality again,
\[
\begin{split}
    B_{32}= {} &
	(1-\ve)(1+\gamma)\sum_{i\in I}\frac{2F^{i\ov{i}}}{\lambda_{1}^{2}}\left|\sum_{q\notin I}\ov{\nu_q}V_1(\tilde{g}_{i\ov{q}})\right|^{2} \\
	\leq {} & (1-\ve)(1+\gamma)\sum_{i\in I}\frac{2F^{i\ov{i}}}{\lambda_{1}^{2}}
	\left(\sum_{q\notin I}
	\frac{(\tilde{g}_{q\ov{q}}-\tilde{g}_{i\ov{i}})|\nu_{q}|^{2}}{F^{i\ov{i}}-F^{q\ov{q}}}\right)
	\left(\sum_{q\notin I}
	\frac{(F^{i\ov{i}}-F^{q\ov{q}})|V_{1}(\tilde{g}_{i\ov{q}})|^{2}}{\tilde{g}_{q\ov{q}}-\tilde{g}_{i\ov{i}}}\right).
\end{split}
\]
 The denominators in the above are not zero. Indeed, for $i\in I$ and $q\notin I$, by definition of the index set $I$,
\[
F^{q\ov{q}} \leq F^{i_0\ov{i_0}} \leq A^{-2}e^{2Au}F^{i_0+1\ov{i_0+1}} \leq A^{-2}e^{2Au}F^{i\ov{i}}.
\]
Increasing $A$ if necessary, we have $F^{i\ov{i}}\neq F^{q\ov{q}}$ and so $\tilde{g}_{q\ov{q}}\neq\tilde{g}_{i\ov{i}}$. Recalling $\omega_{u}\geq -C_{A}K\chi$ (see Lemma \ref{nu}), for $i\in I$ and $q\notin I$, we have
\begin{equation}\label{positive constant}
0 \leq
\frac{(\tilde{g}_{q\ov{q}}-\tilde{g}_{i\ov{i}})|\nu_{q}|^{2}}{F^{i\ov{i}}-F^{q\ov{q}}}
\leq \frac{\tilde{g}_{q\ov{q}}|\nu_{q}|^{2}-\tilde{g}_{i\ov{i}}|\nu_{q}|^{2}}{(1-A^{-2}e^{2Au})F^{i\ov{i}}}
\leq \frac{\tilde{g}_{q\ov{q}}|\nu_{q}|^{2}+C_{A}K}{(1-A^{-2}e^{2Au})F^{i\ov{i}}}.
\end{equation}
It then follows that
\[
\begin{split}
B_{32}
\leq {} & (1-\ve)(1+\gamma)\sum_{i\in I}\frac{2F^{i\ov{i}}}{\lambda_{1}^{2}}
\left(\sum_{q\notin I}
\frac{\tilde{g}_{q\ov{q}}|\nu_{q}|^{2}+C_{A}K}{(1-A^{-2}e^{2Au})F^{i\ov{i}}}\right)
\left(\sum_{q\notin I} \frac{(F^{i\ov{i}}-F^{q\ov{q}})|V_{1}(\tilde{g}_{i\ov{q}})|^{2}}{\tilde{g}_{q\ov{q}}-\tilde{g}_{i\ov{i}}}\right) \\
\leq {} & \frac{(1-\ve)(1+\gamma)}{1-A^{-2}e^{2Au}}
\left(\sum_{q\notin I}\tilde{g}_{q\ov{q}}|\nu_{q}|^{2}+C_{A}K\right)
\sum_{i\in I}\frac{2}{\lambda_{1}^{2}}\left(\sum_{q\notin I}	\frac{(F^{i\ov{i}}-F^{q\ov{q}})|V_{1}(\tilde{g}_{i\ov{q}})|^{2}}{\tilde{g}_{q\ov{q}}-\tilde{g}_{i\ov{i}}}\right).
\end{split}
\]
Using \eqref{second derive of F} and the concavity of $f$, we have
\[
G_{2} = -\frac{1}{\lambda_{1}} F^{i\bar{k},j\bar{l}}V_{1}(\tilde{g}_{i\bar{k}})V_{1}(\tilde{g}_{j\bar{l}})
\geq \frac{2}{\lambda_{1}}\sum_{i\in I}\sum_{q\notin I}\frac{(F^{i\ov{i}}-F^{q\ov{q}})|V_{1}(\tilde{g}_{i\ov{q}})|^{2}}{\tilde{g}_{q\ov{q}}-\tilde{g}_{i\ov{i}}}
\]
and so
\[
\begin{split}
B_{32} \leq {} & \frac{(1-\ve)(1+\gamma )}{\lambda_{1}(1-A^{-2}e^{2Au})}
\left(\sum_{q\notin I}\tilde{g}_{q\ov{q}}|\nu_{q}|^{2}+C_{A}K\right)G_{2}.
\end{split}
\]
Increasing $A$ if necessary, $\ve=\frac{e^{Au(x_{0})}}{9}$ implies
\[
\frac{(1-\ve)(1+\gamma)}{\lambda_{1}(1-A^{-2}e^{2Au})}
\leq \left(1-\frac{\ve}{2}\right)\left(\frac{1+\gamma}{\lambda_{1}}\right).
\]
Then
\begin{equation}\label{B32}
B_{32} \leq \left(1-\frac{\ve}{2}\right)\left(\frac{1+\gamma}{\lambda_{1}}\right)
\left(\sum_{q\notin I}\tilde{g}_{q\ov{q}}|\nu_{q}|^{2}+C_{A}K\right)G_{2}.
\end{equation}
Combining \eqref{B31} and \eqref{B32},
\[
\begin{split}
B_{31}+B_{32}
\leq {} & \frac{1-\ve}{(2-\ve)\lambda_{1}}
\left(1+\frac{1}{\gamma}\right)\left(\lambda_{1}-\sum_{\alpha>1}\lambda_{\alpha}\varsigma_{\alpha}^{2}\right)G_{1} \\
& {} + \left(1-\frac{\ve}{2}\right)\left(\frac{1+\gamma}{\lambda_{1}}\right)
\left(\sum_{q\notin I}\tilde{g}_{q\ov{q}}|\nu_{q}|^{2}+C_{A}K\right)G_{2}.
\end{split}
\]
 Thanks to Lemma \ref{nu}, increasing $C_{A}$ if necessary, we may assume that
\begin{equation}\label{B 3 eqn 1}
\sum_{q\notin I}\tilde{g}_{q\ov{q}}|\nu_{q}|^{2}+C_{A}K > 0.
\end{equation}
We split the proof of \eqref{B 3 claim} into two cases.

\bigskip
\noindent
{\bf Case (i).} $\frac{1}{2}\left(\lambda_{1}+\sum_{\alpha>1}\lambda_{\alpha}\varsigma_{\alpha}^{2}\right)
>\left(1-\frac{\ve}{2}\right)\left(\sum_{q\notin I}\tilde{g}_{q\ov{q}}|\nu_{q}|^{2}+C_{A}K\right)$.
\bigskip

By the assumption of Case (i) and \eqref{B 3 eqn 1},
\[
\frac{1}{2}\left(\lambda_{1}+\sum_{\alpha>1}\lambda_{\alpha}\varsigma_{\alpha}^{2}\right)
> \left(1-\frac{\ve}{2}\right)\left(\sum_{q\notin I}\tilde{g}_{q\ov{q}}|\nu_{q}|^{2}+C_{A}K\right)
> 0.
\]
Since $\lambda_{1}>\lambda_{2}$ at $x_{0}$, we can choose
\[
\gamma := \frac{\lambda_{1}-\sum_{\alpha>1}\lambda_{\alpha}\varsigma_{\alpha}^{2}}
{\lambda_{1}+\sum_{\alpha>1}\lambda_{\alpha}\varsigma_{\alpha}^{2}} > 0.
\]
Then
\[
\begin{split}
B_{31}+B_{32}
\leq {} & \frac{1-\ve}{(2-\ve)\lambda_{1}}
\left(1+\frac{1}{\gamma}\right)\left(\lambda_{1}-\sum_{\alpha>1}\lambda_{\alpha}\varsigma_{\alpha}^{2}\right)
G_{1} \\
& {} + \left(1-\frac{\ve}{2}\right)\left(\frac{1+\gamma}{\lambda_{1}}\right)
\left(\sum_{q\notin I}\tilde{g}_{q\ov{q}}|\nu_{q}|^{2}+C_{A}K\right)G_{2} \\
\leq {} & \frac{1}{2\lambda_{1}}\left(1+\frac{1}{\gamma}\right)
\left(\lambda_{1}-\sum_{\alpha>1}\lambda_{\alpha}\varsigma_{\alpha}^{2}\right)
G_{1}+\frac{1+\gamma}{2\lambda_{1}}
\left(\lambda_{1}+\sum_{\alpha>1}\lambda_{\alpha}\varsigma_{\alpha}^{2}\right)G_{2} \\[2mm]
= {} & G_{1}+G_{2}.
\end{split}
\]

\bigskip
\noindent
{\bf Case (ii).} $\frac{1}{2}\left(\lambda_{1}+\sum_{\alpha>1}\lambda_{\alpha}\varsigma_{\alpha}^{2}\right)
\leq(1-\frac{\ve}{2})\left(\sum_{q\notin I}\tilde{g}_{q\ov{q}}|\nu_{q}|^{2}+C_{A}K\right)$.
\bigskip

It is clear that
\begin{equation}\label{case b inequality 2}
\begin{split}
& \tilde{g}(W_1, \ov{W_{1}}) = g(W_1, \ov{W_{1}})+(\de\dbar u)(W_1, \ov{W_{1}})
=  W_{1}\ov{W_{1}}(u)+O(K) \\
= {} & \frac{1}{2}\left(u_{V_{1}V_{1}}+u_{JV_{1}JV_{1}}\right)+O(K)
= \frac{1}{2}\left(\lambda_1+\sum_{\alpha>1}\lambda_\alpha\varsigma_{\alpha}^2\right)+O(K),
\end{split}
\end{equation}
where $O(K)$ denotes a term satisfying $|O(K)|\leq CK$ for some uniform constant $C$. Thanks to Lemma \ref{nu}, we have $\omega_{u}\geq-C_{A}K\chi$,
and then
\begin{equation*}
\begin{split}
&\sum_{q\notin I}\tilde{g}_{q\ov{q}}|\nu_{q}|^{2}+C_{A}K
\leq \sum_{q}\tilde{g}_{q\ov{q}}|\nu_{q}|^{2}+C_{A}K \\
= {} & \tilde{g}(W_1, \ov{W_{1}})+C_{A}K
\leq \frac{1}{2}\left(\lambda_1+\sum_{\alpha>1}\lambda_\alpha\varsigma_{\alpha}^2\right)+C_{A}K,
\end{split}
\end{equation*}
where we used \eqref{definition of W} in the last inequality. Combining this with the assumption of Case (ii), we see that
\begin{equation}\label{case b inequality 1}
\sum_{q\notin I}\tilde{g}_{q\ov{q}}|\nu_{q}|^{2}+C_{A}K \leq \frac{C_{A}K}{\ve}.
\end{equation}
Using $\omega_{u}\geq-C_{A}K\chi$ and \eqref{case b inequality 2},
\[
\frac{1}{2}\left(\lambda_1+\sum_{\alpha>1}\lambda_\alpha\varsigma_{\alpha}^2\right)
\geq \tilde{g}(W_1, \ov{W_{1}})-CK = \sum_{q}\tilde{g}_{q\ov{q}}|\nu_{q}|^{2}-CK \geq -C_{A}K.
\]
It then follows that
\[
0 < \lambda_{1}-\sum_{\alpha>1}\lambda_{\alpha}\varsigma_{\alpha}^{2}
\leq 2\lambda_{1}+C_{A}K\leq (2+2\ve^{2})\lambda_{1},
\]
where we assume without loss of generality that $\lambda_{1}\geq\frac{C_{A}K}{\ve^{2}}$. Choosing $\gamma=\frac{1}{\ve^{2}}$ and using \eqref{case b inequality 1},
\[
\begin{split}
B_{31}+B_{32}
\leq {} & \frac{1-\ve}{(2-\ve)\lambda_{1}}
\left(1+\frac{1}{\gamma}\right)\left(\lambda_{1}-\sum_{\alpha>1}\lambda_{\alpha}\varsigma_{\alpha}^{2}\right)G_{1} \\
& {} + \left(1-\frac{\ve}{2}\right)\left(\frac{1+\gamma}{\lambda_{1}}\right)
\left(\sum_{q\notin I}\tilde{g}_{q\ov{q}}|\nu_{q}|^{2}+C_{A}K\right)G_{2} \\
\leq {} & \frac{2-2\ve}{2-\ve}(1+\ve^{2})(1+\ve^{2})G_{1}
+\frac{C_{A}K}{\ve^{3}\lambda_{1}}G_{2}.
\end{split}
\]
Recalling $\ve=\frac{e^{Au(x_{0})}}{9}$ and increasing $A$ if necessary, we may assume that
\[
\frac{2-2\ve}{2-\ve}(1+\ve^{2})(1+\ve^{2}) \leq 1.
\]
Then $B_{31}+B_{32} \leq G_{1}+G_{2}$ provided that $\lambda_{1}\geq\frac{C_{A}}{\ve^{3}}$.
\end{proof}

Now we are in a position to prove Case 3 of Theorem \ref{Thm4.1}.

\begin{proof}[Proof of Case 3]
Combining Proposition \ref{lower bound of L Q}, Lemma \ref{bad terms 1 2} and \ref{B 3}, we have
\begin{equation*}
\begin{split}
0 \geq {} & (A^{2}e^{-Au}-9\ve A^{2}e^{-2Au}) F^{i\bar{i}}|u_{i}|^{2}-\frac{C}{\ve}\mathcal{F}\\
&+ \frac{\eta'}{4}\sum_{i,j} F^{i\bar{i}}(|e_{i}e_{j}u|^{2}+|e_{i}\bar{e}_{j}u|^{2})
-Ae^{-Au}L(u).
\end{split}
\end{equation*}
Recalling that $\ve$ has been chosen as $\frac{e^{Au(x_{0})}}{9}$ in Lemma \ref{B 3}, we obtain
\begin{equation}\label{proof of Thm4.1 eqn 1}
\begin{split}
0 \geq &-\frac{C}{\ve}\mathcal{F}+ \frac{\eta'}{4}\sum_{i,j} F^{i\bar{i}}(|e_{i}e_{j}u|^{2}+|e_{i}\bar{e}_{j}u|^{2})-Ae^{-Au}L(u).
\end{split}
\end{equation}
We now choose $A=\frac{9C+1}{\theta}$, where $\theta$ is the constant in Proposition \ref{prop subsolution}. There are two subcases:

\bigskip
\noindent
{\bf Subcase 3.1.} $-L(u)\geq \theta\mathcal{F}$.
\bigskip

In this subcase, \eqref{proof of Thm4.1 eqn 1} implies
\[
0 \geq \left(A\theta e^{-Au}-\frac{C}{\ve}\right)\mathcal{F}+ \frac{\eta'}{4}\sum_{i,j} F^{i\bar{i}}(|e_{i}e_{j}u|^{2}+|e_{i}\bar{e}_{j}u|^{2}).
\]
Using $A=\frac{9C+1}{\theta}$, we have
\[
A\theta e^{-Au}-\frac{C}{\ve}
= A\theta e^{-Au}-9Ce^{-Au} \geq e^{-Au}.
\]
It then follows that
\[
0\geq e^{-Au}\mathcal{F}+ \frac{\eta'}{4}\sum_{i,j} F^{i\bar{i}}(|e_{i}e_{j}u|^{2}+|e_{i}\bar{e}_{j}u|^{2}).
\]
This yields a contradiction.

\bigskip
\noindent
{\bf Subcase 3.2.} $F^{i\bar{i}}\geq \theta\mathcal{F}$ for $i=1,2,\ldots,n$,
\bigskip

By the Cauchy-Schwarz inequality,
\begin{equation*}
\begin{split}
Ae^{-Au}L(u) = {} & Ae^{-Au}\sum_{i}F^{i\bar{i}}(e_{i}\ov{e}_{i}u-[e_{i},\ov{e}_{i}]^{(0,1)}u) \\
\leq {} & Ae^{-Au}\mathcal{F}\sum_{i}|e_{i}\ov{e}_{i}u|+CAe^{-Au}K\mathcal{F} \\
\leq {} & \frac{\theta\eta'}{8}\mathcal{F}\sum_{i}|e_{i}\ov{e}_{i}u|^{2}+C_{A}K\mathcal{F}.
\end{split}
\end{equation*}
Substituting this into \eqref{proof of Thm4.1 eqn 1},
\[
\frac{\theta\eta'}{8}\mathcal{F}\sum_{i,j} (|e_{i}e_{j}u|^{2}+|e_{i}\bar{e}_{j}u|^{2})\leq C_{A} K\mathcal{F}.
\]
It then follows that
\[
\sum_{i,j} (|e_{i}e_{j}u|^{2}+|e_{i}\bar{e}_{j}u|^{2})\leq C_{A} K^{2}
\]
and so $\lambda_{1}\leq C_{A}K$. This completes the proof of Subcase 3.2.
\end{proof}

\section{Proof of Theorem \ref{main estimate}}\label{proof of main estimate}
In this section, we will prove the $C^{2}$ estimate by adapting the argument of \cite[Section 6]{Szekelyhidi18} in the almost Hermitian setting, and then give the proof of Theorem \ref{main estimate}.

\subsection{$C^{2}$ estimate}
\begin{proposition}\label{blowup argument}
Let $u$ be a smooth solution of the equation \eqref{nonlinear equation} with $\sup_{M}(u-\underline{u})=0$. Then there exists a constant $C$ depending on $\underline{u}$, $h$, $\omega$, $f$, $\Gamma$ and $(M,\chi,J)$ such that
\[
\sup_{M}|\nabla^{2}u|_{\chi} \leq C.
\]
\end{proposition}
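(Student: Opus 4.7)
My plan is to combine Theorem \ref{Thm4.1} with the blowup and Liouville scheme of Sz\'ekelyhidi \cite[Section 6]{Szekelyhidi18}. Because Theorem \ref{Thm4.1} gives the quadratic bound $\sup_M|\nabla^2 u|_\chi \leq C\sup_M |\partial u|_\chi^2 + C$, it suffices to bound the gradient $\sup_M |\partial u|_\chi$ by a uniform constant depending only on the stated data.

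I would argue by contradiction. If the desired bound fails, then one can extract a sequence of smooth solutions $u_k$ of \eqref{nonlinear equation} (with admissible data lying in a bounded set of parameters) satisfying $\sup_M (u_k - \underline{u}_k) = 0$ and $L_k := \sup_M |\partial u_k|_\chi \to \infty$. By Proposition \ref{Prop3.2} the $u_k$ are uniformly bounded in $L^\infty$. Choose $x_k \in M$ achieving the supremum, and in normal coordinates centred at $x_k$ in which $\chi(x_k)$ and $J(x_k)$ are the Euclidean metric and standard complex structure, define
\[
v_k(y) := u_k(y/L_k), \qquad y \in B_{L_k R_0}(0),
\]
where $R_0$ is less than the injectivity radius of $\chi$. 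By construction $|\partial v_k|_{\chi_k}\le 1$ with equality at $0$, and \eqref{aa} together with the rescaling yields $|\nabla^2 v_k|_{\chi_k} \leq C + CL_k^{-2}$. Here $\chi_k, J_k, \omega_k$ denote the pullbacks of $\chi, J, \omega$ under $y\mapsto y/L_k$; all these structures, together with the Nijenhuis tensor of $J_k$, flatten in $C^\infty_{\mathrm{loc}}$ to the standard Euclidean K\"ahler structure on $\mathbb{C}^n$, because every derivative of the metric, every component of the Nijenhuis tensor, and every commutator $[e_i,\overline{e}_j]^{(0,1)}$ appearing in the operator $L$ of \eqref{L} acquires at least one factor $L_k^{-1}$ after rescaling.

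Upgrading the uniform $C^{1,1}$ bound via Evans--Krylov type interior estimates for the concave elliptic equation (available thanks to assumption (i) on $f$), I extract a subsequence $v_k \to v_\infty$ in $C^{2,\alpha}_{\mathrm{loc}}(\mathbb{C}^n)$. The limit $v_\infty$ is a bounded $C^{2,\alpha}$ function on $\mathbb{C}^n$ with $|\partial v_\infty|_{\mathrm{Euc}}(0) = 1$, whose standard complex Hessian $\iddbar v_\infty$ lies in $\overline{\Gamma}$ and solves a constant-coefficient equation $f(\mu(\iddbar v_\infty)) = h_\infty$ on $\mathbb{C}^n$. I would then invoke the Liouville-type theorem \cite[Theorem 20]{Szekelyhidi18}, which forces such a $v_\infty$ to have constant complex Hessian; a bounded function of this type on $\mathbb{C}^n$ must be constant, contradicting $|\partial v_\infty(0)|_{\mathrm{Euc}}=1$.

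The main obstacle I anticipate is the careful verification that the rescaled almost Hermitian data converge strongly enough on expanding balls for the limit equation on $\mathbb{C}^n$ to have the clean form to which \cite[Theorem 20]{Szekelyhidi18} applies; specifically, that the pulled back second order operator \eqref{L} tends to the standard complex Hessian operator associated with $f$ once the vanishing torsion contributions are accounted for. Beyond this bookkeeping, the argument follows the scheme of \cite[Section 6]{Szekelyhidi18} with only notational changes, since the heavy lifting has already been carried out in Theorem \ref{Thm4.1}.
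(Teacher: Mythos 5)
Your overall strategy is the same as the paper's: reduce via Theorem \ref{Thm4.1} to a gradient bound, then run the blowup--Liouville argument of Sz\'ekelyhidi. However, there is a genuine gap in the middle step, and it lies precisely at the place you flag as ``the main obstacle.''

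The problem is your claim that the limit $v_\infty$ is a $C^{2,\alpha}$ classical solution of a constant-coefficient equation $f(\mu(\iddbar v_\infty)) = h_\infty$. This is not what happens, and the Evans--Krylov upgrade is not available. To see why, write the rescaled equation: since $\iddbar u_k = \Lambda_k^2\,\iddbar v_k$ plus lower-order terms (up to the frame comparison), the equation $f\big(\mu(\omega_{u_k})\big) = h_k$ becomes $f\big(\mu(\omega + \Lambda_k^2\,\iddbar v_k)\big) = h_k$, i.e.\ the eigenvalues get multiplied by $\Lambda_k^2$, not the right-hand side. There is no uniform ellipticity for the $v_k$: the linearized operator for $v_k$ is $\Lambda_k^2 F^{i\bar{j}}$ and the $F^{i\bar{j}}$ themselves are evaluated at eigenvalues $\mu(\omega_{u_k})$ that can go to the boundary $\partial\Gamma$ or to infinity. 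So Evans--Krylov does not apply, and indeed the best one can get from Theorem \ref{Thm4.1} is a uniform $C^{1,1}$ bound for $v_k$, hence only $C^{1,1}_{\mathrm{loc}}$ (not $C^{2,\alpha}_{\mathrm{loc}}$) convergence. Moreover, assumption (iii) forces the eigenvalues of $\iddbar v_\infty$ to lie on $\partial\Gamma$ almost everywhere: if they were in the interior $\Gamma$ at some point, then $\Lambda_k^2\,\mu(\iddbar v_k)$ would escape to infinity along a ray in $\Gamma$, so $f \to \sup_\Gamma f$, contradicting $h_k \leq \sup_\Gamma f - C_0^{-1}$. So the limit equation degenerates rather than passing to a clean constant-coefficient problem.

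What the paper actually proves, and what \cite[Theorem 20]{Szekelyhidi18} actually requires, is the viscosity notion of a ``$\Gamma$-solution'': a $C^{1,1}$ function $v_\infty$ such that every $C^2$ test function touching from above has Hessian eigenvalues in $\overline\Gamma$, and every $C^2$ test function touching from below has Hessian eigenvalues in $\mathbb{R}^n\setminus\Gamma$. Verifying these two viscosity inequalities for the $C^{1,1}$ limit is the real content of the step you are trying to bypass; it involves the comparison between the frame $\{e_i\}$ and the Euclidean frame $\{Z_i\}$ and uses assumption (iii) crucially to rule out the case where the limiting Hessian lies in the interior of $\Gamma$. You need to carry out this verification in the $C^{1,1}$/viscosity framework rather than asserting classical regularity; once you do, the appeal to \cite[Theorem 20]{Szekelyhidi18} does close the argument as you say.
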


\begin{proof}
By Theorem \ref{Thm4.1}, it suffices to prove
\begin{equation}\label{blowup argument eqn 4}
\sup_{M}|\de u|_{\chi}^{2} \leq C.
\end{equation}
We follow the argument of \cite[Section 6]{Szekelyhidi18}. For convenience, we denote $|\cdot|_{\chi}$ by $|\cdot|$. To prove \eqref{blowup argument eqn 4}, we argue by contradiction. Suppose that there exist sequences of smooth functions $u_{k}$ and $h_{k}$ such that
\begin{enumerate}\setlength{\itemsep}{1mm}
\item $\sup_{\de\Gamma}f<h_{k}\leq\sup_{\Gamma}f-C_{0}^{-1}$ and $\|h_{k}\|_{C^{2}}\leq C_{0}$,
\item $F(\omega_{u_{k}})=h_{k}$ and it admits a $\mathcal{C}$-subsolution $\underline{u}_{k}$ with $\sup_{M}(u_{k}-\underline{u}_{k})=0$ and $\|\underline{u}_{k}\|_{C^{4}}\leq C_{0}$,
\item $\sup_{M}|\de u_{k}|^{2}=:\Lambda_{k}^{2}\rightarrow\infty$,
\end{enumerate}
where $C_{0}$ is a positive constant independent of $k$. Let $p_{k}$ be the maximum point of $|\de u_{k}|^{2}$, i.e.
\[
|\de u_{k}|^{2}(p_{k}) = \sup_{M}|\de u_{k}|^{2}=\Lambda_{k}^{2}\rightarrow\infty.
\]
Passing to a subsequence, we may assume that $p_{k}$ converges to $p_{\infty}$. Near $p_{\infty}$, we choose the coordinate system $(U,\{x^{\alpha}\}_{\alpha=1}^{2n})$ such that at $p_{\infty}$,
\[
\chi_{\alpha\beta}
= \chi\left(\frac{\de}{\de x^{\alpha}},\frac{\de}{\de x^{\beta}}\right)
= \delta_{\alpha\beta}
\]
and
\[
J\left(\frac{\de}{\de x^{2i-1}}\right) = \frac{\de}{\de x^{2i}}, \quad
\text{for $i=1,2,\ldots,n$}.
\]
For convenience, we assume that $U$ contains the Euclidean ball of radius $2$ and write $\de_{\alpha}=\frac{\de}{\de x^{\alpha}}$ for $\alpha=1,2,\ldots,2n$. Let $\{e_{i}\}_{i=1}^{n}$ be a local $\chi$-unitary frame of $(1,0)$-vectors with respect to $J$ such that at $p_{\infty}$,
\[
e_{i} = \frac{1}{\sqrt{2}}
\left(\de_{2i-1}-\sqrt{-1}\de_{2i}\right), \quad \text{for $i=1,2,\cdots,n$}.
\]
Let $Z_{i}$ be the vector field defined by
\[
Z_{i} = \frac{1}{\sqrt{2}}
\left(\de_{2i-1}-\sqrt{-1}\de_{2i}\right), \quad \text{for $i=1,2,\cdots,n$}.
\]
Note that $e_{i}(p_{\infty})=Z_{i}(p_{\infty})$, but they may be different outside $p_{\infty}$. Let $g_{E}$ and $J_{E}$ be the standard metric and complex structure of the Euclidean space. Then $\{Z_{i}\}_{i=1}^{n}$ is the standard $g_{E}$-unitary frame of $(1,0)$-vectors with respect to $J_{E}$. Note that $Z_{i}$ may be not $(1,0)$-vector field with respect to $J$.
To characterize the difference of $e_{i}$ and $Z_{i}$, we write
\[
Z_{i} = e_{i}+A_{i}^{\alpha}\de_{\alpha}.
\]
Since $e_{i}(p_{\infty})=Z_{i}(p_{\infty})$, then for any $i$ and $\alpha$, we have the following limit of the coefficient $A_{i}^{\alpha}$:
\[
\lim_{p\rightarrow p_{\infty}}A_{i}^{\alpha}(p) = 0.
\]

Next, we compute
\[
\begin{split}
& Z_{i}\ov{Z}_{j}u_{k} = \left(e_{i}+A_{i}^{\alpha}\de_{\alpha}\right)
\left(\ov{e}_{j}+\ov{A}_{j}^{\beta}\de_{\beta}\right)u_{k} \\
= {} & e_{i}\ov{e}_{j}u_{k}+e_{i}(\ov{A}_{j}^{\beta}\de_{\beta})u_{k}
+A_{i}^{\alpha}\de_{\alpha}\ov{e}_{j}u_{k}
+(A_{i}^{\alpha}\de_{\alpha})(\ov{A}_{j}^{\beta}\de_{\beta})u_{k} \\[1mm]
= {} & e_{i}\ov{e}_{j}u_{k}
+(e_{i}\ov{A}_{j}^{\beta})(\de_{\beta}u_{k})+\ov{A}_{j}^{\beta}e_{i}\de_{\beta}u_{k}
+A_{i}^{\alpha}\de_{\alpha}\ov{e}_{j}u_{k}
+A_{i}^{\alpha}(\de_{\alpha}\ov{A}_{j}^{\beta})(\de_{\beta}u_{k})
+A_{i}^{\alpha}\ov{A}_{j}^{\beta}\de_{\alpha}\de_{\beta}u_{k}.
\end{split}
\]
Thanks to Theorem \ref{Thm4.1}, we have
\begin{equation}\label{blowup argument eqn 5}
\sup_{M}|\nabla^{2}u_{k}|
\leq C\sup_{M}|\nabla u_{k}|^{2}+C \leq C\Lambda_{k}^{2}
\end{equation}
and so
\begin{equation}\label{blowup argument eqn 1}
\begin{split}
Z_{i}\ov{Z}_{j}u_{k}
= {} & e_{i}\ov{e}_{j}u_{k}+O(\Lambda_{k})+o_{E}\cdot\Lambda_{k}^{2} \\
= {} & \omega_{u_{k}}(e_{i},\ov{e}_{j})-\omega(e_{i},\ov{e}_{j})
+[e_{i},\ov{e}_{j}]^{(0,1)}u_{k}+O(\Lambda_{k})+o_{E}\cdot\Lambda_{k}^{2} \\[0.5mm]
= {} & \omega_{u_{k}}(e_{i},\ov{e}_{j})+O(\Lambda_{k})+o_{E}\cdot\Lambda_{k}^{2},
\end{split}
\end{equation}
where $O(\Lambda_{k})$ denotes a term satisfying $|O(\Lambda_{k})|\leq C\Lambda_{k}$, and $o_{E}$ denotes a term satisfying $\lim_{p\rightarrow p_{\infty}}o_{E}=0$.

Define the function $v_{k}:B_{\Lambda_{k}}(0)\rightarrow\mathbb{R}$ by
\[
v_{k}(z) := u_{k}\left(p_{k}+\frac{z}{\Lambda_{k}}\right).
\]
By Proposition \ref{Prop3.2} and \eqref{blowup argument eqn 5}, we obtain
\begin{enumerate}\setlength{\itemsep}{1mm}
\item $\|v_{k}\|_{C^{2}(B_{\Lambda_{k}}(0))}\leq C$,
\item $|\de v_{k}|(0)=1+o(1)$, where $o(1)$ denotes a term satisfying $\lim_{k\rightarrow\infty}o(1)=0$.
\end{enumerate}
Passing to a subsequence, we may assume that $v_{k}$ converges to $v_{\infty}$ in $C_{\mathrm{loc}}^{1,1}(\mathbb{C}^{n})$, and so
\[
\|v_{\infty}\|_{C^{1,1}(\mathbb{C}^{n})}\leq C, \quad |\de v_{\infty}|(0)=1.
\]
We have the following claim:
\begin{claim*}
The function $v_{\infty}$ is a $\Gamma$-solution, i.e.,
\begin{enumerate}\setlength{\itemsep}{1mm}
\item If $\vp$ be a $C^{2}$ function such that $\vp\geq v_{\infty}$ near $z_{0}$ and $\vp(z_{0})=v_{\infty}(z_{0})$, then we have $\lambda\left[\frac{\de^{2}\vp}{\de z^{i}\de\ov{z}^{j}}(z_{0})\right]\in\ov{\Gamma}$,
\item If $\vp$ be a $C^{2}$ function such that $\vp\leq v_{\infty}$ near $z_{0}$ and $\vp(z_{0})=v_{\infty}(z_{0})$, then we have $\lambda\left[\frac{\de^{2}\vp}{\de z^{i}\de\ov{z}^{j}}(z_{0})\right]\in\mathbb{R}^{n}\setminus\Gamma$.
\end{enumerate}
\end{claim*}

\begin{proof}[Proof of Claim]
(1) For any $\ve>0$, we choose sufficiently large $k$ such that
\[
\|v_{k}-v_{\infty}\|_{C^{0}(B_{1}(z_{0}))} \leq \ve^{2}.
\]
Then there exist $a\in\mathbb{R}$ with $|a|\leq\ve$ and $z_{1}\in B_{\ve}(z_{0})$ satisfying
\[
\vp+\ve|z-z_{0}|^{2}+a \geq v_{k} \ \ \text{on $B_{\ve}(z_{0})$}
\]
and
\[
\vp(z_{1})+\ve|z_{1}-z_{0}|^{2}+a = v_{k}(z_{1}).
\]
It follows that
\[
\frac{\de^{2}\vp}{\de z^{i}\de\ov{z}^{j}}(z_{1})+\ve\delta_{ij}
\geq \frac{\de^{2}v_{k}}{\de z^{i}\de\ov{z}^{j}}(z_{1}).
\]
On the other hand, by the definitions of $v_{k}$, $Z_{i}$ and \eqref{blowup argument eqn 1},
\begin{equation}\label{blowup argument eqn 2}
\begin{split}
\frac{\de^{2}v_{k}}{\de z^{i}\de\ov{z}^{j}}(z_{1})
= {} & \frac{1}{\Lambda_{i}^{2}}(Z_{i}\ov{Z}_{j}u_{k})
\left(p_{\infty}+\frac{z_{1}}{\Lambda_{k}}\right) \\
= {} & \frac{1}{\Lambda_{k}^{2}}\omega_{u_{k}}(e_{i},\ov{e}_{j})
\left(p_{\infty}+\frac{z_{1}}{\Lambda_{k}}\right)+o(1) \\
\in {} & \frac{1}{\Lambda_{k}^{2}}\Gamma+o(1) \\[1mm]
= {} & \Gamma+o(1).
\end{split}
\end{equation}
Increasing $k$ if necessary,
\[
\lambda\left[\frac{\de^{2}\vp}{\de z^{i}\de\ov{z}^{j}}(z_{1})\right]
\in \Gamma-2\ve\mathbf{1}.
\]
Letting $\ve\rightarrow0$, we obtain
\[
\lambda\left[\frac{\de^{2}\vp}{\de z^{i}\de\ov{z}^{j}}(z_{0})\right]
\in \ov{\Gamma}.
\]

\bigskip

(2) For any $\ve>0$, we choose sufficiently large $k$ such that
\[
\|v_{k}-v_{\infty}\|_{C^{0}(B_{1}(z_{0}))} \leq \ve^{2}.
\]
Then there exist $a\in\mathbb{R}$ with $|a|\leq\ve$ and $z_{1}\in B_{\ve}(z_{0})$ satisfying
\[
\vp-\ve|z-z_{0}|^{2}+a \leq v_{k} \ \ \text{on $B_{\ve}(z_{0})$}
\]
and
\[
\vp(z_{1})-\ve|z_{1}-z_{0}|^{2}+a = v_{k}(z_{1}).
\]
It follows that
\[
\frac{\de^{2}\vp}{\de z^{i}\de\ov{z}^{j}}(z_{1})-\ve\delta_{ij}
\leq \frac{\de^{2}v_{k}}{\de z^{i}\de\ov{z}^{j}}(z_{1}).
\]
Suppose that
\begin{equation}\label{blowup argument eqn 3}
\lambda\left[\frac{\de^{2}\vp}{\de z^{i}\de\ov{z}^{j}}(z_{1})\right]
\in \Gamma+3\ve\mathbf{1}.
\end{equation}
Then
\[
\lambda\left[\frac{\de^{2}v_{k}}{\de z^{i}\de\ov{z}^{j}}(z_{1})\right]
\in \Gamma+2\ve\mathbf{1}.
\]
Combining this with \eqref{blowup argument eqn 2},
\[
\frac{1}{\Lambda_{k}^{2}}\cdot\lambda\left[\omega_{u_{k}}(e_{i},\ov{e}_{j})
\left(p_{\infty}+\frac{z_{1}}{\Lambda_{k}}\right)\right]
= \lambda\left[\frac{\de^{2}v_{k}}{\de z^{i}\de\ov{z}^{j}}(z_{1})\right]+o(1)
\in \Gamma+\ve\mathbf{1}.
\]
This shows
\[
\lambda\left[\omega_{u_{k}}(e_{i},\ov{e}_{j})
\left(p_{\infty}+\frac{z_{1}}{\Lambda_{k}}\right)\right]
\in \Lambda_{k}^{2}\Gamma+\ve\Lambda_{k}^{2}\mathbf{1}
= \Gamma+\ve\Lambda_{k}^{2}\mathbf{1}.
\]
By $f(\lambda(\omega_{u_{k}}))=F(\omega_{u_{k}})=h_{k}$ and \cite[Lemma 9 (a)]{Szekelyhidi18}, we obtain
\[
h_{k}\left(p_{\infty}+\frac{z_{1}}{\Lambda_{k}}\right)
= f(\lambda\left[\omega_{u_{k}}(e_{i},\ov{e}_{j})
\left(p_{\infty}+\frac{z_{1}}{\Lambda_{k}}\right)\right]) \rightarrow \sup_{\Gamma}f \ \
\text{as $k\rightarrow\infty$}.
\]
If $\sup_{\Gamma}f=\infty$, then the above contradicts with $\|h_{k}\|_{C^{2}}\leq C_{0}$. If $\sup_{\Gamma}f<\infty$, then the above contradicts with
\[
h_{k}\leq\sup_{\Gamma}f-C_{0}^{-1}.
\]
This implies \eqref{blowup argument eqn 3} is impossible and so
\[
\lambda\left[\frac{\de^{2}\vp}{\de z^{i}\de\ov{z}^{j}}(z_{1})\right]
\notin \Gamma+3\ve\mathbf{1}.
\]
Letting $\ve\rightarrow0$, we obtain
\[
\lambda\left[\frac{\de^{2}\vp}{\de z^{i}\de\ov{z}^{j}}(z_{0})\right]
\in \mathbb{R}^{n}\setminus\Gamma.
\]
\end{proof}

Combining the above claim and $\|v_{\infty}\|_{C^{1,1}(\mathbb{C}^{n})}\leq C$ with \cite[Theorem 20]{Szekelyhidi18}, the function $v_{\infty}$ is constant, which contradicts with $|\de v_{\infty}|(0)=1$. Then we complete the proof of Theorem \ref{blowup argument}.
\end{proof}

\subsection{$C^{2,\alpha}$ estimate}

\begin{proof}[Proof of Theorem \ref{main estimate}]
For any $p\in M$, we choose a coordinate system $(U,\{x^{\alpha}\}_{\alpha=1}^{2n})$ such that at $p$,
\[
\chi_{\alpha\beta}
= \chi\left(\frac{\de}{\de x^{\alpha}},\frac{\de}{\de x^{\alpha}}\right)
= \delta_{\alpha\beta}
\]
and
\[
J\left(\frac{\de}{\de x^{2i-1}}\right) = \frac{\de}{\de x^{2i}}, \quad
\text{for $i=1,2,\ldots,n$}.
\]
We further assume that $U$ contains $B_{1}(0)\subset\mathbb{R}^{2n}$. In $B_{1}(0)$, we define the $(1,0)$-vector field $Z_{i}$ by
\[
Z_{i} = \frac{1}{\sqrt{2}}
\left(\frac{\de}{\de x^{i}}-\sqrt{-1}J\left(\frac{\de}{\de x^{i}}\right)\right), \quad \text{for $i=1,2,\cdots,n$}.
\]
It is clear that $\{Z_{i}(0)\}$ is a basis for $T_{p}^{(1,0)}M$. Shrinking $B_{1}(0)$ if necessary, we may assume that $\{Z_{i}\}$ is also a $(1,0)$-frame of $T_{\mathbb{C}}M$ on $B_{1}(0)$. Let $\theta^{i}$ be the dual frame of $\{Z_{i}\}_{i=1}^{n}$. Write
\[
\chi = \sqrt{-1}\chi_{i\ov{j}}\theta^{i}\wedge\ov{\theta}^{j}, \quad
\omega = \sqrt{-1}g_{i\ov{j}}\theta^{i}\wedge\ov{\theta}^{j}, \quad
\omega_{u} = \sqrt{-1}\ti{g}_{i\ov{j}}\theta^{i}\wedge\ov{\theta}^{j}.
\]

In order to apply \cite[Theorem 1.2]{TWWY15}, we first define $T$, $S$ and $\mathcal{G}$ (since the letter $F$ has already been used in \eqref{nonlinear equation}, here we use $\mathcal{G}$ instead). Let $\mathrm{Sym}(2n)$ be the space of symmetric $2n\times 2n$ matrices with real entries. Define the map $T:\mathrm{Sym}(2n)\times B_{1}(0)\rightarrow\mathrm{Sym}(2n)$ by
\[
T(N,x) = \frac{1}{4}\left(N+J^{T}(x)\cdot N\cdot J(x)\right).
\]
Let $H(u)$ be the symmetric bilinear form given by
\[
H(u)(X,Y)=(dJdu)^{(1,1)}(X,JY)=2\ddbar u(X,JY),
\]
By \cite[p. 443]{TWWY15}, we have
\[
H(u)(x) = 2T(D^{2}u(x),x)+E(u)(x),
\]
where $E(u)(x)$ is an error matrix which depends linearly on $Du(x)$. Define the map $S: B_{1}(0)\rightarrow\mathrm{Sym}(2n)$ by
\[
 S(x) = g(x)+\frac{1}{2}E(u)(x).
\]
Then
\[
S(x)+T(D^{2}u(x),x) = g(x)+\frac{1}{2}H(u)(x)
\]
and

\begin{equation}\label{ti g i ov j}
\begin{split}
& \big[S(x)+T(D^{2}u(x),x)\big](Z_{i},\ov{Z}_{j})
= g_{i\ov{j}}+\frac{1}{2}H(u)(Z_{i},\ov{Z}_{j}) \\
= {} & g_{i\ov{j}}+(\sqrt{-1}\de\dbar u)(Z_{i},J\ov{Z}_{j})
= g_{i\ov{j}}+(\de\dbar u)(Z_{i},\ov{Z}_{j}) = \ti{g}_{i\ov{j}}.
\end{split}
\end{equation}

For any $N\in\mathrm{Sym}(2n)$, we write
\[
N_{i\ov{j}}(x) = N(Z_{i}(x),\ov{Z}_{j}(x)).
\]
Then $(N_{i\ov{j}}(x))$ is a Hermitian matrix. Let $\mu(N,x)$ be the eigenvalues of $(N_{i\ov{j}}(x))$ with respect to $\chi(x)$. Consider the set
\[
\mathcal{E} = \{N\in\mathrm{Sym}(2n)~|~\mu(N,0)\in\ov{\Gamma^{\sigma}}\cap\ov{B_{R}(0)}\},
\]
where $\sigma$ and $R>0$ are two constants determined later. By the assumptions of $f$ and $\Gamma$, we see that $\mathcal{E}$ is compact and convex.

After shrinking $B_{1}(0)$, we may assume that $\chi(x)$ is close to $\chi(0)$, which implies $\mu(N,x)$ is close to $\mu(N,0)$. Thanks to Proposition \ref{blowup argument}, choosing $\sigma$ and $R$ appropriately, we have
\[
S(x)+T(D^{2}u(x),x) \in \mathcal{E}, \ \ \text{on $B_{1}(0)$}.
\]
Furthermore, there exists a neighborhood $\mathcal{U}$ of $\mathcal{E}$ such that $\mu(N,x)\in\Gamma$ for any $(N,x)\in \mathcal{U}\times B_{1}(0)$. Define the function $\mathcal{G}:\mathcal{U}\times B_{1}(0)\rightarrow\mathbb{R}$ by
\[
\mathcal{G}(N,x) := f(\mu(N,x)).
\]
Extend it smoothly to $\mathrm{Sym}(2n)\times B_{1}(0)$, and still denote it by $\mathcal{G}$ for convenience. Combining \eqref{ti g i ov j} and the definition of $\mathcal{G}$, it is clear that
\[
\mathcal{G}(S(x)+T(D^{2}u(x),x),x) = F(\omega_{u}).
\]
Then on $B_{1}(0)$, the equation \eqref{nonlinear equation} can be written as
\[
\mathcal{G}(S(x)+T(N,x),x) = h(x).
\]

The assumptions of $f$ and $(M,\chi,J)$ show $\mathcal{G}$, $T$ satisfies \cite[{\bf H1}, \bf {H2}]{TWWY15}. In the definition of $S$, the term $E(u)$ depends on $Du$ linearly. Proposition \ref{blowup argument} gives a uniform $C^{1}$ bound of this term. Then $S$ satisfies \cite[\bf {H3}]{TWWY15}. By \cite[Theorem 1.2]{TWWY15} and a covering argument, we obtain the required $C^{2,\alpha}$ estimate.
\end{proof}

\section{Proofs of Theorem \ref{complex Hessian equation} and \ref{n-1 MA equation}}\label{applications}
\subsection{Complex Hessian equation}
For $1\leq k\leq n$, let $\sigma_{k}$ and $\Gamma_{k}$ be the $k$-th elementary symmetric polynomial and the $k$-th G{\aa}rding cone on $\mathbb R^n$, respectively. Namely, for any $\mu=(\mu_{1},\mu_{2},\cdots,\mu_{n})\in\mathbb{R}^{n}$, we have
\begin{equation*}
\sigma_{k}(\mu) =\sum_{1\leq i_{1}<\cdots<i_{k}\leq n}\mu_{i_{1}}\mu_{i_{2}}\cdots\mu_{i_{k}}
\end{equation*}
and
\begin{equation*}
\Gamma_{k} = \{ \mu\in\mathbb{R}^{n}: \text{$\sigma_{i}(\mu)>0$ for $i=1,2,\cdots,k$} \}.
\end{equation*}
The above definitions can be extended to almost Hermitian manifold $(M,\chi,J)$ as follows:
\begin{equation*}
\sigma_{k}(\alpha)=\binom{n}{k}
\frac{\alpha^{k}\wedge\chi^{n-k}}{\chi^{n}}, \quad \text{for any $\alpha\in A^{1,1}(M)$},
\end{equation*}
and
\begin{equation*}
\Gamma_{k}(M,\chi) = \{ \alpha\in A^{1,1}(M): \text{$\sigma_{i}(\alpha)>0$ for $i=1,2,\cdots,k$} \}.
\end{equation*}
We say that $\alpha\in A^{1,1}(M)$ is $k$-positive if $\alpha\in\Gamma_{k}(M,\chi)$.

Now we are in position to prove Theorem \ref{complex Hessian equation}.
\begin{proof}[Proof of Theorem \ref{complex Hessian equation}]
The uniqueness of solution follows from the maximum principle (see e.g. \cite[p. 1980-1981]{CTW19}).
For the existence of solution, we consider the family of equations
\[
\begin{cases}
\ \omega_{u_{t}}^{k}\wedge \chi^{n-k}=e^{th+(1-t)h_{0}+c_{t}}\chi^{n}, \\[1mm]
\ \omega_{u_{t}}\in \Gamma_{k}(\chi), \\[0.5mm]
\ \sup_{M}u_{t}=0,
\end{cases}
\eqno (*)_{t}
\]
where $\{c_{t}\}$ are constants and
\[
h_{0} = \log\frac{\omega^{k}\wedge \chi^{n-k}}{\chi^{n}}.
\]
Let us define
\[
S = \{t\in[0,1]: \text{there exists a pair $(u_{t},c_{t})\in C^{\infty}(M)\times\mathbb{R}$ solving $(*)_{t}$}\}.
\]
Note that $(0,0)$ solves $(*)_{0}$ and hence $S\neq\emptyset$. To prove the existence of solution for \eqref{complex Hessian equation 1}, it suffices to prove that $S$ is closed and open.

\bigskip
\noindent
{\bf Step 1.} $S$ is closed.
\bigskip

We first show that $\{c_{t}\}$ is uniformly bounded. Suppose that $u_{t}$ achieves its maximum at the point $p_{t}\in M$. Then the maximum principle shows $\ddbar u_{t}$ is non-positive at $p_{t}$. Combining this with $(*)_{t}$, we obtain the upper bound of $c_{t}$:
\[
c_{t} \leq -th(p_{t})+th_{0}(p_{t}) \leq C,
\]
for some $C$ depending only on $h$, $\omega$ and $\chi$. The lower bound of $c_{t}$ can be proved similarly.

Define
\[
f = \log\sigma_{k}, \quad \Gamma = \Gamma_{k}.
\]
As pointed out in \cite[p. 368-369]{Szekelyhidi18}, one can verify that the above setting satisfies the structural assumptions in Section \ref{introduction}. Furthermore, the $k$-positivity of $\omega$ shows $\underline{u}=0$ is a $\mathcal{C}$-subsolution of $(*)_{t}$. Then $C^{\infty}$ a priori estimates of $u_{t}$ follows from Theorem \ref{main estimate} and the standard bootstrapping argument. Combining this with the Arzel\`a-Ascoli theorem, $S$ is closed.

\bigskip
\noindent
{\bf Step 2.} $S$ is open.
\bigskip

Suppose there exists a pair $(u_{\hat{t}},c_{\hat{t}})$ satisfies  $(*)_{\hat{t}}$. We hope to show that when $t$ is close to $\hat{t}$, there exists a pair $(u_{t},c_{t})\in C^{\infty}(M)\times\mathbb{R}$ solving $(*)_{t}$.  	

We first consider the linearized operator of $(*)_{\hat{t}}$ at $u_{\hat{t}}$:
\[
L_{u_{\hat{t}}}(\psi):=k\frac{\ddbar\psi\wedge\omega_{u_{\hat{t}}}^{k-1}\wedge\chi^{n-k}}{\omega_{u_{\hat{t}}}^{k}\wedge \chi^{n-k}}, \quad \text{for $\psi\in C^{2}(M)$}.
\]
The operator $L_{u_{\hat{t}}}$ is elliptic and then the index is zero. By the maximum principle,
\begin{equation}\label{ker L1}
\mathrm{Ker}(L_{u_{\hat{t}}})=\{\text{constant}\}.
\end{equation}
Denote $L_{u_{\hat{t}}}^{*}$ by the $L^{2}$-adjoint operator of $L_{u_{\hat{t}}}$ with respect to the volume form
\[
\mathrm{dvol}_{k}=\omega_{u_{\hat{t}}}^{k}\wedge \chi^{n-k}.
\]
By the Fredholm alternative, there is a non-negative function $\xi$ such that
\begin{equation}\label{ker L*}
\mathrm{Ker}(L_{u_{\hat{t}}}^{*})=\text{Span}\big\{\xi\big\}.
\end{equation}
It follows from the strong maximum principle that $\xi>0$. Up to a constant, we may and do assume
\[
\int_{M}\xi\, \mathrm{dvol}_{k}=1.
\]

To prove the openness of $S$, we define the space  by
\[
\mathcal{U}^{2,\alpha}:=\Big\{\phi\in C^{2,\alpha}(M):
\text{$\omega_{\phi}\in\Gamma_{k}(\chi)$ and
$\int_{M}\phi\cdot\xi \, \mathrm{dvol}_{k}=0$}
\Big\}.
\]
Then the tangent space of $\mathcal{U}^{2,\alpha}$ at $u_{\hat{t}}$ is given by
\[
T_{u_{\hat{t}}}\,\mathcal{U}^{2,\alpha}
:=\Big\{\psi\in C^{2,\alpha}(M): \int_{M}\psi\cdot\xi \, \mathrm{dvol}_{k}=0\Big\}.
\]
Let us consider the map
\begin{equation*}
G(\phi,c)=\log\sigma_{k}(\omega_{\phi})-c,
\end{equation*}
which maps $\mathcal{U}^{2,\alpha}\times\mathbb{R}$ to $C^{\alpha}(M)$.
It is clear that the linearized operator of $G$ at $(u_{\hat{t}},\hat{t})$ is given by
\begin{equation}\label{linear operator}
(L_{u_{\hat{t}}}-c): T_{u_{\hat{t}}}\,\mathcal{U}^{2,\alpha}\times \mathbb{R}\longrightarrow  C^{\alpha}(M).
\end{equation}
On the one hand, for any $h\in C^{\alpha}(M)$, there exists a constant $c$ such that
\[
\int_{M}(h+c)\cdot\xi \, \text{dvol}_{k}=0.
\]
By \eqref{ker L*} and the Fredholm alternative, we can find a real function $\psi$ on $M$ such that
\[
L_{u_{\hat{t}}}(\psi)=h+c
\]
Hence, the map $L_{u_{\hat{t}}}-c$ is surjective. On the other hand, suppose that there are two pairs $(\psi_{1},c_{1}),(\psi_{2},c_{2})\in T_{u_{\hat{t}}}\,\mathcal{U}^{2,\alpha}\times \mathbb{R}$ such that
\[
L_{u_{\hat{t}}}(\psi_{1})-c_{1}
= L_{u_{\hat{t}}}(\psi_{2})-c_{2}.
\]
It then follows that
\[
L_{u_{\hat{t}}}(\psi_{1}-\psi_{2}) = c_{1}-c_{2}.
\]
Applying the maximum principle twice, we obtain $c_{1}=c_{2}$ and $\psi_{1}=\psi_{2}$. Then $L_{u_{\hat{t}}}-c$ is injective.

Now we conclude that $L_{u_{\hat{t}}}-c$ is bijective. By the inverse function theorem, when $t$ is close to $\hat{t}$, there exists a pair $(u_{t},c_{t})\in\mathcal{U}^{2,\alpha}\times\mathbb{R}$ satisfying
\begin{equation*}
 G(u_{t},c_{t}) = th+(1-t)h_{0}.
\end{equation*}
The standard elliptic theory shows that $u_{t}\in C^{\infty}(M)$. Then $S$ is open.
\end{proof}

\subsection{The Monge-Ampere equation for $(n-1)$ plurisubharmonic functions}
As in \cite[p. 371-372]{Szekelyhidi18}, let $T$ be the linear map given by
\[
T(\mu) = (T(\mu)_{1},\ldots,T(\mu)_{n}), \quad
T(\mu)_{k} = \frac{1}{n-1}\sum_{i\neq k}\mu_{i}, \quad
\text{for $\mu\in\mathbb{R}^{n}$}.
\]
Define
\[
f = \log\sigma_{n}(T), \quad \Gamma = T^{-1}(\Gamma_{n}).
\]
It is straightforward to verify that the above setting satisfies the assumptions (i)-(iii) in Section \ref{introduction}. Write
\[
\omega =(\mathrm{tr}_{\chi}\eta)\chi-(n-1)\eta,
\]
and so the equation in \eqref{n-1 MA equation 1} can be written as
\[
F(\omega_{u}) = h+c.
\]
Then Theorem \ref{n-1 MA equation} can be proved by the similar argument of Theorem \ref{complex Hessian equation}.

\end{document}